\definecolor{verylight}{gray}{0.97}
\definecolor{light}{gray}{0.9}
\definecolor{medium}{gray}{0.85}
\definecolor{dark}{gray}{0.6}
 \def\NZQ{\mathbb}               
 \def\ZZ{{\NZQ Z}}
 \def\RR{{\NZQ R}}
 \def\frk{\mathfrak}               
 \def\pp{{\frk p}}
 \def\mm{{\frk m}}
 \def\Ic{{\mathcal I}}
 \def\G{{\mathcal G}}
\def\Bc{{\mathcal B}}
 \def\ab{{\mathbf a}}
 \def\xb{{\mathbf x}}
 \def\eb{{\mathbf e}}
 \def\pb{{\mathbf p}}
 \def\qb{{\mathbf q}}
\def\ab{{\mathbf a}}
\def\fb{{\mathbf f}}
 \def\Soc{{\mathbf Soc}}
 \def\opn#1#2{\def#1{\operatorname{#2}}} 
 \opn\chara{char} \opn\length{\ell} \opn\pd{pd} \opn\rk{rk}
 \opn\projdim{proj\,dim} \opn\injdim{inj\,dim} \opn\rank{rank}
 \opn\depth{depth} \opn\grade{grade} \opn\height{height}
 \opn\bigheight{bigheight}
 \opn\embdim{emb\,dim} \opn\codim{codim}
 \opn\Tr{Tr} \opn\bigrank{big\,rank}
 \opn\superheight{superheight}\opn\lcm{lcm}
 \opn\trdeg{tr\,deg}
 \opn\reg{reg} \opn\lreg{lreg} \opn\ini{in} \opn\lpd{lpd}
 \opn\size{size} \opn\sdepth{sdepth}
 \opn\link{link}\opn\fdepth{fdepth}\opn\lex{lex}
 \opn\tr{tr}
 \opn\type{type}
 \opn\gap{gap}
 \opn\arithdeg{arith-deg}
 \opn\Deg{Deg}
 \opn\sat{sat}
 \opn\mat{mat}
 \opn\Mat{Mat}
\opn\mdeg{mdeg}
\opn\pred{pred}
\opn\succ{succ}
 \opn\div{div} \opn\Div{Div} \opn\cl{cl} \opn\Cl{Cl}
 \opn\Spec{Spec} \opn\Supp{Supp} \opn\supp{supp} \opn\Sing{Sing}
 \opn\Ass{Ass} \opn\Min{Min}\opn\Mon{Mon}
 \opn\Ann{Ann} \opn\Rad{Rad} \opn\Soc{Soc}
 \opn\Im{Im} \opn\Ker{Ker} \opn\Coker{Coker} \opn\Am{Am}
 \opn\Hom{Hom} \opn\Tor{Tor} \opn\Ext{Ext} \opn\End{End}
 \opn\Aut{Aut} \opn\id{id}
 \opn\nat{nat}
 \opn\pff{pf}
 \opn\Pf{Pf} \opn\GL{GL} \opn\SL{SL} \opn\mod{mod} \opn\ord{ord}
 \opn\Gin{Gin} \opn\Hilb{Hilb}\opn\sort{sort}
 \opn\PF{PF}\opn\Ap{Ap}
 \opn\mult{mult}
 \opn\bight{bight}
 \opn\aff{aff}
 \opn\relint{relint} \opn\st{st}
 \opn\lk{lk} \opn\cn{cn} \opn\core{core} \opn\vol{vol}  \opn\inp{inp} \opn\nilpot{nilpot}
 \opn\link{link} \opn\star{star}\opn\lex{lex}\opn\set{set}
 \opn\width{wd}
 \opn\Fr{F}
 \opn\QF{QF}
 \opn\G{G}
 \opn\type{type}\opn\res{res}
 \opn\conv{conv}
 \opn\Shad{Shad}
 \opn\gr{gr}
 \def\pot#1#2{#1[\kern-0.28ex[#2]\kern-0.28ex]}
 \opn\dirlim{\underrightarrow{\lim}}
 \opn\inivlim{\underleftarrow{\lim}}
 \let\sect=\cap
 \let\tensor=\otimes
 \let\iso=\cong
 \let\to=\rightarrow
 \def\Implies{\ifmmode\Longrightarrow \else
         \unskip${}\Longrightarrow{}$\ignorespaces\fi}
 \def\implies{\ifmmode\Rightarrow \else
         \unskip${}\Rightarrow{}$\ignorespaces\fi}
 \def\iff{\ifmmode\Longleftrightarrow \else
         \unskip${}\Longleftrightarrow{}$\ignorespaces\fi}
\theoremstyle{plain}
 \newtheorem{Theorem}{Theorem}[section]
 \newtheorem{Lemma}[Theorem]{Lemma}
 \newtheorem{Corollary}[Theorem]{Corollary}
 \newtheorem{Proposition}[Theorem]{Proposition}
 \newtheorem{Claim}[Theorem]{Claim}
\theoremstyle{definition}
 \newtheorem{Example}[Theorem]{Example}
 \newtheorem{Definition}[Theorem]{Definition}
 \newtheorem{Problem}[Theorem]{Problem}
 \newtheorem{Question}[Theorem]{Question}
\newtheorem*{acknowledgments}{Acknowledgments}
 \let\epsilon\varepsilon
 \let\kappa=\varkappa
 \def\qed{\ifhmode\textqed\fi
       \ifmmode\ifinner\quad\qedsymbol\else\dispqed\fi\fi}
 \def\textqed{\unskip\nobreak\penalty50
        \hskip2em\hbox{}\nobreak\hfil\qedsymbol
        \parfillskip=0pt \finalhyphendemerits=0}
 \def\dispqed{\rlap{\qquad\qedsymbol}}
 \opn\dis{dis}
 \def\pnt{{\raise0.5mm\hbox{\large\bf.}}}
 \opn\Lex{Lex}
 \def\fkm{{\frk m}}
\def\fkp{{\frk p}}
\def\fkq{{\frk q}}
\def\calR{{\mathcal{R}}}
\def\ol{\overline}
\title{Graded Bourbaki ideals of graded modules}
\author{J\"{u}rgen Herzog}
\address{J\"urgen Herzog: Fachbereich Mathematik, Universit\"at Duisburg-Essen, Fakult\"at f\"ur Mathematik, 45117 Essen, Germany}
\email{juergen.herzog@uni-essen.de}
\author{Shinya Kumashiro}
\address{Shinya Kumashiro: Department of Mathematics and Informatics, Graduate School of Science and Engineering, Chiba University, Yayoi-cho 1-33, Inage-ku, Chiba, 263-8522, Japan}
\email{caxa2602@chiba-u.jp}
\author{Dumitru I. Stamate}
\address{Dumitru I. Stamate: Faculty of Mathematics and computer science, university of Bucharest, Str. Academiei 14, Bucharest - 010014, Romania}
\email{dumitru.stamate@fmi.unibuc.ro}
\thanks{2020 {\em Mathematics Subject Classification.} 13A02, 13A30, 13D02, 13H10}
\thanks{{\em Key words and phrases.} Bourbaki ideal, polynomial ring, Koszul cycle, Rees algebra}
\thanks{The second author was supported by JSPS KAKENHI Grant Number JP19J10579 and JSPS Overseas Challenge Program for Young Researchers.}
\thanks{The third author was partly supported by the University of Bucharest, Faculty of Mathematics and Computer Science through the 2019 Mobility Fund.}
\begin{document}

\begin{abstract}
In this paper  we study  graded Bourbaki ideals. It is a well-known fact that for  torsionfree modules over  Noetherian normal domains,  Bourbaki sequences exist. We  give  criteria in terms  of certain attached matrices for a  homomorphism of modules to induce a Bourbaki sequence. Special attention is given to graded Bourbaki sequences.  In the second  part of the paper, we apply these  results to the Koszul cycles of the residue class field and determine particular Bourbaki ideals explicitly. We also obtain in a special case  the relationship  between the structure of the Rees algebra of a Koszul cycle and the Rees algebra of its Bourbaki ideal.
\end{abstract}

\maketitle



\section{Introduction}\label{section1}

The purpose of  this paper is to study  Bourbaki sequences and Bourbaki ideals. Throughout this section let $R$ be a commutative Noetherian ring and $M$ a finitely generated $R$-module. Then a {\it Bourbaki sequence} of $M$ is a short exact sequence
\begin{equation}
\label{bbb}
0\to F\to M \to I \to 0
\end{equation}
of $R$-modules, where $F$ is a free $R$-module and $I$ is an ideal of $R$. $I$ is called a {\it Bourbaki ideal} of $M$.
As a fundamental result, a Bourbaki sequence of $M$ always exists if $R$ is a normal domain and $M$ is a finitely generated torsionfree $R$-module (see \cite[Chapter VII, Section 4, 9. Theorem 6.]{B}). If $R$ is a standard graded normal domain over an infinite field, then a graded Bourbaki sequence of $M$ also exists (Theorem \ref{basic2.1}, see also \cite[Corollary 2.4]{Ku2}).
One of the advantages  of Bourbaki's theorem is the fact, that,  by passing to a Bourbaki sequence,  many properties of a module are inherited by those  of its Bourbaki ideals. One can find applications of Bourbaki's theorem, for instance, to the vanishing of cohomologies, the study of the maximal Cohen-Macaulay modules over hypersurface rings,  the Hilbert functions, and the Rees algebras of modules  (\cite{Au, HKu, HTZ, Ku2, SUV2, W}).

On the other hand, even though we know about the existence of a Bourbaki sequence, it is not easy to construct one explicitly.  Actually, for a given homomorphism of modules, it is still difficult to check whether the map induces a Bourbaki sequence.

\begin{Problem}\label{problem}
Let $R$ be a Noetherian ring and $M$ a finitely generated $R$-module. Let $F$ be a finitely generated free $R$-module. Then, for a given homomorphism  $\varphi: F\to M$ of modules, when does the sequence
\[
0\to F\xrightarrow{\varphi} M \to \Coker(\varphi) \to 0
\]
provide a Bourbaki sequence, and if this is the case,  how to compute the corresponding  Bourbaki ideal?
\end{Problem}

Now let us explain how we organized this paper.
In Section \ref{section2} we prepare general propositions to study Problem \ref{problem}, and discuss the existence of graded Bourbaki sequences.
In Section \ref{section3} we introduce an  invariant, the {\it Bourbaki number}, obtained from a graded Bourbaki sequence over the polynomial ring. The Bourbaki number is an integer which only depends on the degree of the generators and invariants of $M$. It will be useful to find in Section \ref{section4} graded Bourbaki ideals for Koszul cycles.

In Section \ref{section3.5} we solve Problem \ref{problem} under some additional conditions, as described in the following theorem.

\begin{Theorem}{\rm (cf. Theorem \ref{a4.2} and \ref{a4.3})}
Let $R$ be a normal domain of dimension $\geq 2$ and $M$ a finitely generated torsionfree $R$-module of rank  $r>1$. Let  $\varphi: R^{r-1} \to M$ be an $R$-module homomorphism.
\begin{enumerate}[{\rm (a)}]
\item Suppose that $M$ is reflexive and take an exact sequence $0\to M \xrightarrow{\iota} F \to X \to 0$ so that $F$ is a  free $R$-module and $X$ is a finitely generated torsionfree $R$-module. Then the following conditions are equivalent:
\begin{enumerate}
\item[{\rm (i)}] $0\to R^{r-1}\xrightarrow{\varphi} M \to \Coker(\varphi) \to 0$ is a Bourbaki sequence;
\item[{\rm (ii)}] $\height (I_{r-1}(\iota\circ \varphi)) \geq 2$.
\end{enumerate}

Here, $I_t(\alpha)$ denotes the ideal of $t$-minors of a matrix representing $\alpha$, where $\alpha$ is a module homomorphism between finitely generated free $R$-modules.

\item Suppose that $\varphi$ is an injective map and $\projdim_R M <\infty$. Let
\begin{align*}
R^{\beta_1} \xrightarrow{\psi} R^{\beta_0} \to \Coker (\varphi) \to 0
\end{align*}
be a presentation for $\Coker (\varphi)$. Then the following conditions are equivalent:
\begin{enumerate}
\item[{\rm (i)}] $0\to R^{r-1}\xrightarrow{\varphi} M \to \Coker(\varphi) \to 0$ is a Bourbaki sequence;
\item[{\rm (ii)}] $\height(I_{\beta_0-r+1}(\psi)) \geq 2$.
\end{enumerate}
\end{enumerate}
\end{Theorem}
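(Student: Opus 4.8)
The plan is to reduce both parts to the basic characterization of Bourbaki sequences, namely that $0\to R^{r-1}\xrightarrow{\varphi} M\to \Coker(\varphi)\to 0$ is a Bourbaki sequence precisely when $\varphi$ is injective, $\Coker(\varphi)$ is torsionfree, and $\Coker(\varphi)$ has rank $1$ (so that it embeds in a rank-one free module, hence is isomorphic to an ideal). Since $M$ has rank $r$, torsionfreeness of $\Coker(\varphi)$ forces $\varphi$ to be injective (an element of the kernel would give torsion in the cokernel after localizing at the generic point, or rather: if $\varphi$ is injective then $\Coker(\varphi)$ has rank $1$ automatically; the real content is the torsionfreeness). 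So in both parts the crux is to translate ``$\Coker(\varphi)$ is torsionfree'' into a height condition on a Fitting-type ideal. The main tool will be the standard fact that for a finitely generated module $N$ over a Noetherian normal domain, $N$ is torsionfree if and only if $N$ embeds into a free module, combined with the observation that non-torsionfreeness is detected in codimension $\le 1$: a module presented over a normal domain fails to be torsionfree exactly when it has an associated prime of height $\le 1$, which can be read off from the appropriate Fitting ideal of a presentation.

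\textbf{Part (a).} Here $M$ is reflexive, so I would compose with the embedding $\iota\colon M\hookrightarrow F$ and consider $\iota\circ\varphi\colon R^{r-1}\to F$. The key point is that $\Coker(\varphi)$ is torsionfree if and only if $\Coker(\iota\circ\varphi)$ is torsionfree: indeed there is an exact sequence $0\to \Coker(\varphi)\to F/\varphi(R^{r-1})\to X\to 0$ wait — more carefully, the snake lemma applied to the inclusion of $0\to R^{r-1}\to M$ into $0\to R^{r-1}\to F$ gives $0\to\Coker(\varphi)\to\Coker(\iota\circ\varphi)\to X\to 0$, and since $X$ is torsionfree, $\Coker(\varphi)$ is torsionfree iff $\Coker(\iota\circ\varphi)$ is (one direction is immediate as a submodule of a torsionfree module; for the other, if $\Coker(\varphi)$ is torsionfree then so is the extension, because torsion would have to inject into $X$). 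Now $\iota\circ\varphi$ is a map between free modules $R^{r-1}\to F$, and a standard result (e.g. via the Buchsbaum--Eisenbud acyclicity criterion, or directly: $\Coker$ of an injective map of free modules $R^{r-1}\to R^n$ is torsionfree iff the map has maximal rank $r-1$ at every height-$\le 1$ prime, i.e. iff $I_{r-1}(\iota\circ\varphi)$ has height $\ge 2$) gives the equivalence with $\height I_{r-1}(\iota\circ\varphi)\ge 2$. Injectivity of $\iota\circ\varphi$ (equivalently of $\varphi$) is automatic once $I_{r-1}$ has positive height, since $R$ is a domain of dimension $\ge 2$. This closes part (a).

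\textbf{Part (b).} Now $\varphi$ is injective by hypothesis and $\projdim_R M<\infty$; we are handed a presentation $R^{\beta_1}\xrightarrow{\psi} R^{\beta_0}\to\Coker(\varphi)\to 0$. The task is again to show $\Coker(\varphi)$ torsionfree $\iff \height I_{\beta_0-r+1}(\psi)\ge 2$. Here I would localize at a prime $\pp$ of height $\le 1$: over the regular local ring $R_\pp$ (normal of dimension $\le 1$), a module is torsionfree iff it is free, and a finitely generated module of rank $1$ over such a ring is free iff its first Fitting ideal is the unit ideal. Since $\Coker(\varphi)$ has rank $n:=\rank M-\rank R^{r-1}=r-(r-1)=1$, the relevant Fitting ideal is $\mathrm{Fitt}_{1}(\Coker\varphi)=I_{\beta_0-1}(\psi)$ — but the theorem states $I_{\beta_0-r+1}(\psi)$, so I must reconcile: the presentation's target has rank $\beta_0$, the module has rank $1$, hence the presentation matrix $\psi$ generically has rank $\beta_0-1$, and the Fitting ideal that governs torsionfreeness (the one whose radical is the non-free locus) is $I_{\beta_0-(\text{rank})}(\psi)=I_{\beta_0-1}(\psi)$; I expect the discrepancy to come from the fact that the presentation given is of $\Coker(\varphi)$ as written but one actually works with a lift through $M$, so that $\beta_0$ should really be compared against $M$ and the effective rank drop is $r-1$. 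The clean route is: $\Coker(\varphi)$ is torsionfree iff for all $\pp$ with $\height\pp\le 1$ the module $(\Coker\varphi)_\pp$ is $R_\pp$-free of rank $1$, iff $\psi_\pp$ has a unit $(\beta_0-1)$-minor, iff $I_{\beta_0-1}(\psi)\not\subseteq\pp$ — and then I'd check that under the genericity coming from $\varphi$ injective with $\projdim M<\infty$ the ideal $I_{\beta_0-1}(\psi)$ coincides up to radical with $I_{\beta_0-r+1}(\psi)$, or adjust the statement's indexing accordingly by tracking ranks through the short exact sequence $0\to R^{r-1}\to M\to\Coker\varphi\to 0$ and a minimal free presentation of $M$ itself.

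\textbf{Main obstacle.} The genuinely delicate point is part (b): getting the precise Fitting-ideal index $\beta_0-r+1$ right, and showing that the height condition on \emph{that} minor ideal is exactly equivalent to torsionfreeness of the cokernel. This requires carefully splicing a free presentation of $M$ with the map $\varphi$ (using $\projdim_R M<\infty$ so that everything has finite free resolutions and the Buchsbaum--Eisenbud ``what makes a complex exact'' criterion applies), computing the rank of the combined presentation matrix at generic and at codimension-one primes, and identifying which size of minors detects the drop from free to merely torsionfree. Part (a) by contrast is essentially formal once one has the snake-lemma sequence and the standard ``maximal rank in codimension $\le 1$'' criterion for an injection of free modules to have torsionfree cokernel.
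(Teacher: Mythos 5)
Your part (a) is correct and is essentially the paper's own argument: the snake-lemma comparison of $\Coker(\varphi)$ with $\Coker(\iota\circ\varphi)$ (using torsionfreeness of $X$), followed by the codimension-one criterion for the cokernel of a map of free modules. The one step you outsource as ``standard'' is exactly where the paper works and where normality is used: at height-one primes $(R_1)$ makes $R_{\mathfrak{p}}$ a discrete valuation ring, so freeness of the cokernel is detected by $I_{r-1}(\iota\circ\varphi)_{\mathfrak{p}}=R_{\mathfrak{p}}$, while at primes of height $\geq 2$ the condition $(S_2)$ together with $\projdim_{R_{\mathfrak{p}}}\Coker(\iota\circ\varphi)_{\mathfrak{p}}\le 1$ and Auslander--Buchsbaum makes positive depth automatic. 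Since that is a short argument, (a) is fine.

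Part (b), however, is not proved. Your ``clean route'' --- torsionfree if and only if free at all primes of height $\le 1$ --- gives only the easy implication. The converse is false for a general rank-one module (over $K[x,y]$ the module $R\oplus R/\mathfrak{m}$ is free at every height-one prime but is not torsionfree), and it is precisely here that the hypotheses $\projdim_R M<\infty$, $M$ torsionfree, and the exact sequence $0\to R^{r-1}\xrightarrow{\varphi} M\to \Coker(\varphi)\to 0$ must be used. You flag this as the ``main obstacle'' but do not carry it out. The paper's proof (Theorem 4.6(b) of the paper) supplies exactly this step: for $\height\mathfrak{p}\ge 2$ one has $\depth R_{\mathfrak{p}}\ge 2$ by $(S_2)$; if $M_{\mathfrak{p}}$ is free then $\projdim_{R_{\mathfrak{p}}}N_{\mathfrak{p}}\le 1$ and Auslander--Buchsbaum gives $\depth N_{\mathfrak{p}}\ge 1$; if $M_{\mathfrak{p}}$ is not free, the mapping cone of $\varphi_{\mathfrak{p}}$ gives $\projdim N_{\mathfrak{p}}=\projdim M_{\mathfrak{p}}<\infty$, hence $\depth N_{\mathfrak{p}}=\depth M_{\mathfrak{p}}\ge 1$ because $M$ is torsionfree. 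Only after this does the codimension-one analysis close the argument.

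Concerning the index: your identification of $I_{\beta_0-1}(\psi)$ (the Fitting ideal of the rank-one module $\Coker(\varphi)$ that detects local freeness) is the mathematically correct one, and it is what the paper's Theorem 4.6 yields when applied with $s=\rank\Coker(\varphi)=1$; the exponent $\beta_0-r+1$ agrees with it only when $r=2$. Your fallback hope that $I_{\beta_0-1}(\psi)$ and $I_{\beta_0-r+1}(\psi)$ coincide up to radical under the hypotheses cannot work: take $R=K[x,y]$, $M=R^3$ (so $r=3$) and $\varphi$ with matrix columns $(y,-x,0)^{\mathrm{T}}$ and $(0,0,x)^{\mathrm{T}}$; then $\varphi$ is injective, $\Coker(\varphi)\cong (x,y)\oplus R/(x)$ is not torsionfree, and for the evident presentation with $\beta_0=3$ one has $I_{\beta_0-r+1}(\psi)=I_1(\psi)=(x,y)$ of height two while $I_{\beta_0-1}(\psi)=I_2(\psi)=x\cdot(x,y)$ of height one. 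Since Fitting ideals do not depend on the chosen presentation, the only consistent reading of (b) is with the exponent $\beta_0-1$, and the nontrivial implication must be proved by the depth argument above, not by localization at height $\le 1$ alone nor by reconciling the two ideals.
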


As an application of the above theorem, we will illustrate the ubiquity of graded Bourbaki sequences (Theorem \ref{b4.5}).
Furthermore, we also give a method to compute a Bourbaki ideal for a given Bourbaki sequence (Theorem \ref{a4.4}).

Let $S=K[x_1, \dots, x_n]$ be the polynomial ring. In Section \ref{section4} we apply the previous results to the Koszul cycles $Z_i$ of the residue class field $K$,  and determine particular  Bourbaki ideals explicitly in the cases $i=2,\ n-2,\ n-1$. For $i=n-1$ and $i=n-2$, we can choose multigraded Bourbaki sequences.  Hence the corresponding Bourbaki ideals are monomial ideals (Proposition \ref{a5.2} and \ref{c5.3}). On the other hand, as shown in Theorem \ref{a5.4}, multigraded  Bourbaki sequences do not exist for $1<i< n-2$ when $n\gg 0$ or $n\leq 6$. We  expect it  is also the case for all $n> 6$.

In the last part of this paper we show that our Bourbaki ideal for $Z_{n-2}$ has the property that its Rees  algebra is normal and Cohen--Macaulay. Moreover, for $n$ even it is Gorenstein and it is of  Cohen-Macaulay type 2 if $n$ is odd. The same properties are known for the Rees algebra of $Z_{n-2}$, see \cite[Theorem 3.1 and Theorem 3.4]{SUV}. 

Let us fix our notation throughout this paper. In what follows, let $R$ be a commutative Noetherian ring.
Let $\varphi: F\to G$ be an $R$-module homomorphism between finitely generated free $R$-module $F$ and $G$. We then denote $I_{t} (\varphi)$ the ideal of $R$ generated by the $t$-minors of a matrix representing $\varphi$.
$\mathrm{Q}(R)$ denotes the total ring of fraction of $R$ and the functor $(-)^*$ denotes the $R$-dual. For a finitely generated $R$-module $M$, we say that $M$ is {\it torsionfree} (resp. {\it reflexive}) if the canonical map $M\to \mathrm{Q}(R)\otimes_R M$ is injective (resp. the canonical map $M\to M^{**}$ is bijective).

If $R=\bigoplus_{n\geq 0} R_n$ is a graded Noetherian ring over a field $K=R_0$ and $M$ is a finitely generated graded $R$-module, 
$t_0(M)$ denotes the maximum degree of an element in a minimal homogeneous system of generators of  $M$.

\begin{acknowledgments}
We gratefully acknowledge the use of the computer algebra software CoCoA (\cite{Cocoa}) and Singular (\cite{Sing}) for our computations. The authors would like to thank Ernst Kunz and Bernd Ulrich for discussions around Lemma \ref{b4.3}.
  We thank an anonymous referee for suggestions which clarified the presentation. 

This paper was written while the second author visited Essen for three months and Bucharest for two weeks.
During the visits, Professor Herzog and Professor Stamate were extremely kind to him every day and he would like to express sincere gratitude for their hospitality. 
The third author is grateful to Professor Herzog and the Department of Mathematics of the  Universit\"at Duisburg-Essen  for being excellent hosts, one more time.

\end{acknowledgments}

\section{Preliminaries}\label{section2}

Let $R$ be a Noetherian ring and $M$ a finitely generated $R$-module.
As a fundamental result, a Bourbaki sequence of $M$ always exists if $R$ is a normal domain and $M$ is a finitely generated torsionfree $R$-module (see \cite[Chapter VII, Section 4, 9. Theorem 6.]{B}).
We also have a graded version of Bourbaki sequences.

\begin{Theorem}\label{basic2.1}
Let $R=\bigoplus_{n\geq 0} R_n$ be a standard graded Noetherian normal domain where $R_0$ is an infinite field and $\dim R\ge 2$. Let $M=\bigoplus_{n\in \mathbb{Z}} M_n$ be a finitely generated torsionfree graded $R$-module of rank  $r>1$.

Then for any integer $k\ge t_0(M)$, there exists a graded Bourbaki sequence
\begin{align}\label{Bseq}
0 \to R(-k)^{r-1} \to M \to I(m) \to 0
\end{align}
of $M$, for some  integer $m$  and $I$ is a graded ideal of $R$.

 Moreover, if $R$ is a factorial ring, then there exists a Bourbaki sequence as in \eqref{Bseq} with $\grade (I)\geq 2$.
 \end{Theorem}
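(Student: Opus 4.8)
The plan is to carry out Bourbaki's construction inside the graded category: one picks $r-1$ sufficiently general homogeneous elements of $M_k$ and shows that the resulting homomorphism has torsionfree cokernel of rank $1$.

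First I would reduce the statement to the following: there exist $y_1,\dots,y_{r-1}\in M_k$ such that the map $\varphi\colon R(-k)^{r-1}\to M$ with $\varphi(e_j)=y_j$ is injective and $\Coker\varphi$ is torsionfree. Indeed, then $F:=\varphi(R(-k)^{r-1})\cong R(-k)^{r-1}$ as graded modules, and $\Coker\varphi=M/F$ is a finitely generated graded torsionfree module of rank $r-(r-1)=1$; any such module is isomorphic, up to a twist, to a graded ideal $I$ of $R$ (embed it in the graded quotient ring and clear a homogeneous denominator), which yields \eqref{Bseq}. The hypotheses that $R$ is standard graded and $k\ge t_0(M)$ enter here: they guarantee that $M_k$ generates $M_{\ge k}$ over $R$, that $M/M_{\ge k}$ has finite length, and hence, fixing a $K$-basis $m_1,\dots,m_N$ of $M_k$, that the degree-zero evaluation $e\colon R(-k)^N\to M$, $\epsilon_p\mapsto m_p$, is surjective after localizing at every prime $\ne\mathfrak m$; since $\dim R\ge2$, this includes all height-one primes.

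Next I would pass to a free ambient module. Over the normal domain $R$ the module $M^{**}$ is reflexive, hence a second syzygy, so dualizing a graded presentation of $M^{*}$ gives a graded embedding $M^{**}\hookrightarrow L$ with $L$ free graded and $L/M^{**}$ torsionfree; composing with $M\hookrightarrow M^{**}$ gives $g\colon M\hookrightarrow L$ whose cokernel has torsion submodule $M^{**}/M$ of codimension $\ge2$. This is used in two ways. On one hand, for a homomorphism $\psi$ of rank $\rho$ between free modules, $\Coker\psi$ is torsionfree as soon as $\grade I_\rho(\psi)\ge2$ (a standard consequence of the Buchsbaum–Eisenbud acyclicity criterion), and since $R$ is normal it satisfies Serre's condition $(S_2)$, so here $\height I_\rho(\psi)\ge2$ already forces $\grade I_\rho(\psi)\ge2$. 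On the other hand, $M/F\hookrightarrow L/gF$ and a submodule of a torsionfree module is torsionfree, so it suffices to arrange $\height I_{r-1}(g\circ\varphi)\ge2$; moreover, because $g(M)_{\mathfrak p}$ is a free direct summand of $L_{\mathfrak p}$ of rank $r$ at every height-one prime $\mathfrak p$ (here $L/M^{**}$ torsionfree and $M^{**}/M$ of codimension $\ge 2$ are used), the matrix $C$ of $h:=g\circ e\colon R(-k)^N\to L$ has $C\bmod\mathfrak p$ of rank exactly $r$ for every height-one $\mathfrak p$, whence $\height I_r(C)\ge2$ and a fortiori $\height I_{r-1}(C)\ge2$. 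The key step is then the genericity claim: writing $y_j=\sum_p t_{pj}m_p$, so that $g\circ\varphi=C\cdot T$ with $T=(t_{pj})\in K^{N\times(r-1)}$, one has $\height I_{r-1}(CT)\ge2$ for all $T$ outside a proper Zariski-closed subset of $\mathbb{A}^{N(r-1)}_K$, whose complement contains a $K$-point because $K$ is infinite. I would prove this by passing to the generic matrix $T$ of indeterminates over the normal domain $R''=R\otimes_K K(T)$, verifying $\height_{R''}I_{r-1}(CT)\ge2$ there prime by prime — a height-one prime of $R''$ is either extended from a height-one prime $\mathfrak p$ of $R$, where $C\bmod\mathfrak p$ has full rank $r>r-1$ and a generic linear combination of its columns still has rank $\ge r-1$, or it restricts to $(0)$ on $R$, in which case it defines a divisor of $\mathbb{A}^{N(r-1)}_{Q(R)}$ and so cannot lie inside the codimension-$\ge2$ locus where an $(r-1)$-plane meets the fixed $(N-r)$-plane $\ker(C\otimes Q(R))$ — and then descending to a $K$-point by upper-semicontinuity of fibre dimension for the family $V(I_{r-1}(CT))\to\mathbb{A}^{N(r-1)}_K$. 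For such a $T$: $\grade I_{r-1}(g\varphi)\ge2$, so $\Coker(g\varphi)$ is torsionfree, so is $\Coker\varphi$, and $I_{r-1}(g\varphi)\ne0$ forces $\varphi$ injective — the reduced statement. I expect this uniform genericity over the infinitely many height-one primes to be the main obstacle; the rest is standard homological algebra and degree bookkeeping. (This is, as in \cite{B} and \cite{Ku2}, Bourbaki's argument made homogeneous.)

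Finally, for the "moreover" assertion, suppose $R$ is factorial. If $M$ is free we may take $L=M$ and $g=\mathrm{id}$ above; then the acyclicity criterion identifies $\Coker\varphi$ with $I_{r-1}(\varphi)$ up to a twist, and $\height I_{r-1}(\varphi)\ge2$, hence $\grade I_{r-1}(\varphi)\ge2$ as $R$ is normal. If $M$ is not free, then the ideal $I$ produced above cannot be principal (otherwise \eqref{Bseq} splits and $M$ is free); writing $I=(f)J$ by peeling off homogeneous prime divisors one at a time — a process which terminates in a UFD — the ideal $J$ is contained in no principal prime, hence in no height-one prime, so $\height J\ge2$ and $\grade J\ge2$; since $I(m)\cong J(m-\deg f)$ as graded modules, replacing $I$ by $J$ completes the proof.
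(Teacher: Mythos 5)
Your argument is correct, but it takes a genuinely different route from the paper's proof. The paper settles existence almost immediately by quoting \cite[Corollary~2.4]{Ku2} for the truncation $M_{\ge k}$ (which is generated in degree $k$), and then transfers the resulting sequence to $M$ by a short diagram argument: since $M/M_{\ge k}$ has finite length and $\depth R_{\mathfrak{m}}\ge 2$, the cokernel of $R(-k)^{r-1}\to M$ has $\Ass\subseteq\{0\}$, hence is torsionfree of rank one and isomorphic to some $I(m)$. You instead reprove the core existence from scratch: you pick $r-1$ generic elements of $M_k$, test torsionfreeness of the cokernel by the height of the ideal of $(r-1)$-minors after embedding $M$ into a graded free module (this is precisely the criterion the paper proves later as Theorem~\ref{a4.2}), and you justify genericity by checking the height bound over $R\otimes_K K(T)$ --- treating separately the height-one primes extended from height-one primes of $R$ and those contracting to $(0)$, where the codimension-two degeneracy-locus count applies --- and then descending to a $K$-point; this parallels Lemma~\ref{b4.3} and Theorem~\ref{b4.5}, except that your version needs only $K$ infinite rather than algebraically closed, and it is not circular, since nonemptiness of the good locus comes from your generic-point computation rather than from the theorem being proved. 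What each approach buys: the paper's proof is short but leans on the external reference; yours is self-contained and in effect also establishes that generic choices of the embedding work. One point to phrase carefully: the fibre-dimension descent you invoke is, for an arbitrary finite-type family, semicontinuity on the source plus Chevalley constructibility (or the cone trick used in Lemma~\ref{b4.3}); the conclusion you need --- a dense open set of parameters over which the fibre has dimension at most $\dim R-2$ --- does follow, because that locus is constructible and contains the generic point, and a dense open subset of affine space over an infinite field contains a $K$-point. Your ``moreover'' step coincides with the paper's: peel off the gcd in the UFD and use $(S_2)$ to pass from $\height\ge 2$ to $\grade\ge 2$.
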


\begin{proof}
 By lack of good reference we outline the proof.
Since $R$ is standard graded and $k\ge t_0(M)$, it follows that  
 $M_{\ge k}$ is generated in degree $k$.

By  \cite[Corollary~2.4]{Ku2} we have an exact  sequence
\[
0 \to R(-k)^{r-1} \to M_{\ge k} \to N' \to 0,
\]
where $N'$ is a rank $1$ torsionfree $R$-module. From this  we construct a graded Bourbaki sequence of $M$.  Indeed, consider the following commutative diagram
\[
\xymatrix{
&&0 \ar[d]&0 \ar[d]&\\
0 \ar[r] & R(-k)^{r-1}\ar[r] \ar@{=}[d] &M_{\ge k} \ar[r] \ar[d] & N' \ar[r] \ar[d] & 0\\
0 \ar[r] & R(-k)^{r-1}\ar[r]  & M \ar[r] \ar[d] & N \ar[r] \ar[d] & 0 \\
&&M/M_{\ge k}\ar@{=}[r] \ar[d]&M/M_{\ge k} \ar[d]&\\
&&0&0&,\\
}
\]
where $N$ denotes the cokernel of the composition $R(-k)^{r-1}\to M_{\ge k} \to M$.
Since $M/M_{\ge k}$ has finite length, $N$ has rank one and $\Ass (N)\subseteq \{0, \frk{m}\}$, where $\fkm=R_{>0}$ is the  graded maximal ideal in $R$. If $\frk{m}\in \Ass (N)$, since $\depth R_{\frk{m}}\ge 2$, it follows that $\depth_{R_\frk{m}} M_{\frk{m}}=0$ and $\fkm \in \Ass (M)$. This is a contradiction for the torsionfreeness of $M$. Whence $\Ass (N)=\{0\}$. Therefore, $N$ is torsionfree of rank~1. Let $S$  be the multiplicatively closed set of non-zero homogeneous elements in $R$. It follows that $N\to S^{-1}R\tensor_RN$ is injective and $S^{-1}R\tensor_RN \iso S^{-1}R$. This implies that $N\iso I(m)$ as a graded $R$-module, where $I\subset R$ is a graded ideal and $m$ is a suitable integer.


Now assume in addition that $R$ is a factorial ring. If  $I$ is of grade $1$,  then $I=\alpha{\cdot}J$ for some graded ideal $J$ with
$\gcd (J)=1$. Therefore we may as well assume that $\gcd(I)=1$,  and hence  $\height(I) \ge 2$. Since $R$ is a factorial domain, it is a normal ring and satisfies Serre's condition $(S_2)$. Therefore,  $\grade (I)\ge 2$.
\end{proof}


Non-trivial Bourbaki sequences we only obtain when the grade of the Bourbaki ideal is $2$.  Indeed, we have

\begin{Lemma}\label{b2.2}
Let $R$ be a Noetherian ring and $M$ a finitely generated $R$-module with Bourbaki sequence  $0\to F \to M \to I \to 0$.
If $\grade(I)>2$, then $M\iso F\oplus I$.

In particular, if $M$ is not free and reflexive or $M$ is  an indecomposable module of rank $\ge 2$, then $\grade (I)=2$.
\end{Lemma}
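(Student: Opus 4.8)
The plan is to read the Bourbaki sequence $0\to F\to M\to I\to 0$ as an extension of $I$ by $F$ and to show that its class in $\Ext^1_R(I,F)$ vanishes once $\grade(I)>2$; recall a short exact sequence of modules splits exactly when its class in $\Ext^1$ is zero. Since $F$ is free of finite rank, say $F\iso R^{t}$, there is a natural isomorphism $\Ext^1_R(I,F)\iso\Ext^1_R(I,R)^{t}$, so it is enough to prove $\Ext^1_R(I,R)=0$.

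To compute $\Ext^1_R(I,R)$ I would apply $\Hom_R(-,R)$ to $0\to I\to R\to R/I\to 0$. As $R$ is free, $\Ext^i_R(R,R)=0$ for all $i\ge 1$, so the long exact sequence gives $\Ext^1_R(I,R)\iso\Ext^2_R(R/I,R)$. I then invoke the standard description of grade via $\Ext$ (Bruns--Herzog): for $I\neq R$ one has $\grade(I)=\grade(R/I,R)=\min\{\,i:\Ext^i_R(R/I,R)\neq 0\,\}$, and if $I=R$ there is nothing to prove. Hence $\grade(I)>2$ forces $\Ext^2_R(R/I,R)=0$, so $\Ext^1_R(I,R)=0$, the sequence splits, and $M\iso F\oplus I$. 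This proves the first assertion.

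For the ``in particular'' I would use the rank bookkeeping $\rk F=\rk M-1$ and $\rk I=1$. If $M$ is indecomposable of rank $\ge 2$, then $F$ and $I$ are both nonzero, so a decomposition $M\iso F\oplus I$ is impossible; hence $\grade(I)\le 2$, while $\grade(I)\ge 1$ because $I$ has positive rank and thus contains a nonzerodivisor. If instead $M$ is reflexive and not free, suppose for contradiction $\grade(I)>2$. Then $M\iso F\oplus I$, so $I$ is a direct summand of a reflexive module and hence reflexive; dualizing $0\to I\to R\to R/I\to 0$ and using $\Ext^1_R(R/I,R)=0$ (again from $\grade(I)>2$) gives $0\to(R/I)^{*}\to R\to I^{*}\to 0$ with $(R/I)^{*}=\Ann_R(I)=0$ since $I$ has positive rank; thus $I^{*}\iso R$ and therefore $I\iso I^{**}\iso R$. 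So $I$ is principal, generated by a nonzerodivisor; but a proper principal ideal has grade $1$, so $\grade(I)>2$ forces $I=R$ and $M\iso F\oplus R$ is free, a contradiction. In both cases $\grade(I)\le 2$, and combined with $\grade(I)\ge 2$ in the normalized situation of Theorem~\ref{basic2.1} (where $\gcd(I)=1$, so $\height(I)\ge 2$) this yields $\grade(I)=2$.

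The homological computation in the first two paragraphs is entirely routine. The point that needs care is the last step: one must verify that the hypotheses ``reflexive and not free'' and ``indecomposable of rank $\ge 2$'' genuinely preclude the split extension, which is precisely where the rank identities $\rk F=\rk M-1$ and $\rk I=1$ enter; fixing the grade at $2$ rather than merely at most $2$ then amounts to recording why, in the situation of interest, the Bourbaki ideal cannot have grade $1$.
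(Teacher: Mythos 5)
Your argument for the main assertion is exactly the paper's: $\grade(I)>2$ gives $\Ext^1_R(I,R)\iso\Ext^2_R(R/I,R)=0$ (and $\Ext^1_R(I,F)\iso\Ext^1_R(I,R)^{t}$ since $F$ is free), so the extension splits and $M\iso F\oplus I$. The paper's proof stops there and leaves the ``in particular'' to the reader; your elaboration of it is sound --- in the reflexive case $I^{*}\iso R$ together with reflexivity of the summand $I$ forces $I\iso R$ and hence $M$ free, while in the indecomposable case the splitting itself is impossible --- and you are also right that the hypotheses only yield $\grade(I)\le 2$, the equality $\grade(I)=2$ resting on the ambient normalization $\grade(I)\ge 2$ (as provided by Theorem \ref{basic2.1}), which is how the paper's statement is meant to be read.
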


\begin{proof}
If $\grade(I)>2$, then $\Ext_R^1 (I, R)\cong \Ext_R^2 (R/I, R)=0$. Whence, the Bourbaki sequence $0\to F \to M \to I \to 0$ of $M$ splits.
\end{proof}


\begin{Proposition}\label{b2.3}
Let $R$ be a Cohen-Macaulay normal domain and $M$ a finitely generated torsionfree $R$-module with $\projdim M<\infty$. Let
\[
0 \to F \to M \to I \to 0
\]
be a Bourbaki sequence of $M$ such that  $\grade(I)=2$. Then
\begin{align*}
&\{\frk{p}\in \Spec R \mid \projdim_{R_{\fkp}}M_{\fkp}\le 1\} \\
=&\{\frk{p}\in \Spec R \mid \text{$(R/I)_{\fkp}$ is  a Cohen-Macaulay ring or zero}\}.
\end{align*}

In particular, $R/I$ is Cohen-Macaulay on the punctured spectrum of $R$ if $M$ is locally free on the punctured spectrum of $R$.
\end{Proposition}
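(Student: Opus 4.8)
The plan is to verify the claimed equality of subsets of $\Spec R$ one prime at a time. Fix $\fkp\in\Spec R$ and localize the given Bourbaki sequence to obtain an exact sequence $0\to F_\fkp\to M_\fkp\to I_\fkp\to 0$ of $R_\fkp$-modules with $F_\fkp$ free and $\projdim_{R_\fkp}M_\fkp<\infty$. If $I\not\subseteq\fkp$, then $I_\fkp=R_\fkp$, the sequence splits, $M_\fkp$ is free, so $\projdim_{R_\fkp}M_\fkp=0\le 1$, while at the same time $(R/I)_\fkp=0$; thus such a $\fkp$ lies in both sets. So I may assume $I\subseteq\fkp$, and then $(R/I)_\fkp\neq 0$ because $I\neq 0$ in the domain $R$.

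The next step is to reduce the statement to a comparison of projective dimensions. Since $F_\fkp$ is free, the sequence $0\to F_\fkp\to M_\fkp\to I_\fkp\to 0$ gives $\projdim_{R_\fkp}M_\fkp\le\projdim_{R_\fkp}I_\fkp\le\max\{\projdim_{R_\fkp}M_\fkp,1\}$, so $\projdim_{R_\fkp}M_\fkp\le 1$ if and only if $\projdim_{R_\fkp}I_\fkp\le 1$; and the sequence $0\to I_\fkp\to R_\fkp\to(R/I)_\fkp\to 0$, in which $(R/I)_\fkp$ is a nonzero non-free cyclic module, gives $\projdim_{R_\fkp}(R/I)_\fkp=\projdim_{R_\fkp}I_\fkp+1$. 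Hence $\projdim_{R_\fkp}M_\fkp\le 1$ if and only if $\projdim_{R_\fkp}(R/I)_\fkp\le 2$, and everything comes down to proving that, for $\fkp\supseteq I$, one has $\projdim_{R_\fkp}(R/I)_\fkp\le 2$ if and only if $(R/I)_\fkp$ is Cohen--Macaulay.

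For this I would use that $R$ is Cohen--Macaulay and $\grade I=2$. Since grade does not decrease under localization and $R_\fkp$ is Cohen--Macaulay, $\height I_\fkp=\grade_{R_\fkp}I_\fkp\ge 2$ and $\dim(R/I)_\fkp=\dim R_\fkp-\height I_\fkp$, while $\projdim_{R_\fkp}(R/I)_\fkp<\infty$, so the Auslander--Buchsbaum formula applies. If $\projdim_{R_\fkp}(R/I)_\fkp\le 2$, then $\depth(R/I)_\fkp=\dim R_\fkp-\projdim_{R_\fkp}(R/I)_\fkp\ge\dim R_\fkp-2\ge\dim(R/I)_\fkp$, so $(R/I)_\fkp$ is Cohen--Macaulay. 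Conversely, if $(R/I)_\fkp$ is Cohen--Macaulay, then Auslander--Buchsbaum gives $\projdim_{R_\fkp}(R/I)_\fkp=\dim R_\fkp-\dim(R/I)_\fkp=\height I_\fkp$, so it remains to show $\height I_\fkp=2$. This last point is the step I expect to be the main obstacle: $\height I_\fkp\ge 2$ is automatic, but to rule out $\height I_\fkp\ge 3$ one needs to know that $I$ is unmixed of height $2$. I would prove this by a depth computation at the associated primes of $I$: for $\fkq\in\Ass(R/I)$ one has $\depth(R/I)_\fkq=0$, hence $\depth I_\fkq=1$ from $0\to I_\fkq\to R_\fkq\to(R/I)_\fkq\to 0$, and feeding this into the depth estimate along $0\to F_\fkq\to M_\fkq\to I_\fkq\to 0$ against the depth carried by $M$ (e.g.\ $\depth M_\fkq\ge\min\{2,\dim R_\fkq\}$, valid when $M$ is reflexive, the setting of Lemma~\ref{b2.2}) forces $\height\fkq\le 2$, hence $\height\fkq=2$ by $\grade I=2$; since every $\fkp\supseteq I$ contains some $\fkq\in\Min(I)\subseteq\Ass(R/I)$, we conclude $\height I_\fkp=2$.

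Finally, the ``in particular'' is immediate: if $R$ is local and $M_\fkp$ is free for every $\fkp\neq\fkm$, then $\projdim_{R_\fkp}M_\fkp=0\le 1$ for such $\fkp$, so by the equality just proved $(R/I)_\fkp$ is Cohen--Macaulay or zero for all $\fkp\neq\fkm$, i.e.\ $R/I$ is Cohen--Macaulay on the punctured spectrum of $R$.
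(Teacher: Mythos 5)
Your reduction coincides with the paper's: localize, observe that $\projdim_{R_\fkp}M_\fkp\le 1$ iff $\projdim_{R_\fkp}IR_\fkp\le 1$, dispose of the primes with $I\not\subseteq\fkp$, and use Auslander--Buchsbaum together with $\grade(IR_\fkp)\ge 2$ to see that $\projdim_{R_\fkp}(R/I)_\fkp\le 2$ forces $(R/I)_\fkp$ to be Cohen--Macaulay (equivalently $IR_\fkp$ is perfect of grade $2$, which is how the paper phrases it). This settles the inclusion of the left-hand set in the right-hand set completely, and that is the only direction the paper later needs (for the ``in particular'' statement about the punctured spectrum). You have also put your finger on the genuine crux of the converse: if $(R/I)_\fkp$ is Cohen--Macaulay, Auslander--Buchsbaum only yields $\projdim_{R_\fkp}(R/I)_\fkp=\height(IR_\fkp)$, so one must know $\height(IR_\fkp)=2$, i.e.\ that $I$ has no minimal primes of height $\ge 3$.

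Here is the gap: your argument for that uses $\depth_{R_\fkq}M_\fkq\ge\min\{2,\dim R_\fkq\}$ at $\fkq\in\Ass(R/I)$, which is a consequence of reflexivity (or of $(S_2)$) and not of torsionfreeness, whereas the proposition assumes only that $M$ is torsionfree; torsionfreeness gives $\depth_{R_\fkq}M_\fkq\ge 1$ only, which does not exclude associated primes of $R/I$ of height $\ge 3$. Moreover, the missing step cannot be supplied from the stated hypotheses: take $R=K[x_1,\dots,x_5]$, $I=(x_1,x_2)\cap(x_3,x_4,x_5)$ (so $\grade(I)=2$), $M=I\oplus R$, with the Bourbaki sequence $0\to R\to M\to I\to 0$; at $\fkp=(x_3,x_4,x_5)$ the ring $(R/I)_\fkp$ is the residue field, hence Cohen--Macaulay, while $\projdim_{R_\fkp}M_\fkp=\projdim_{R_\fkp}\fkp R_\fkp=2$. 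So with torsionfreeness alone the right-hand set can be strictly larger, and no argument can close the converse as stated. In fairness, the paper's own proof glides over exactly the same point: its asserted equivalence ``$\projdim_{R_\fkp}IR_\fkp=1$ iff $IR_\fkp$ is perfect (a Cohen--Macaulay ideal)'' is valid only when $\grade(IR_\fkp)=2$, since a perfect ideal of grade $3$ has $\projdim_{R_\fkp}IR_\fkp=2$. If one adds the hypothesis that $M$ is reflexive (or directly that $I$ is unmixed of height $2$), your depth computation at the associated primes of $R/I$ is correct and does complete the converse; note that the Koszul cycles $Z_i$, $i\ge 2$, to which the paper applies this proposition are reflexive, and in any case only the inclusion you fully proved is used there.
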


\begin{proof}
For $\frk{p}\in \Spec R$, let
\begin{equation}\label{localization}
0 \to F_\fkp \to M_{\fkp} \to IR_{\fkp} \to 0
\end{equation}
be the localization of the above Bourbaki sequence. Then $\projdim_{R_\fkp} M_\fkp \le 1$ if and only if $\projdim_{R_\fkp} IR_\fkp \le 1$ since $\Ext_{R_\fkp}^2(M_\fkp, X)\iso \Ext_{R_\fkp}^2(IR_\fkp, X)$ for all $R_\fkp$-module $X$ by (\ref{localization}).

On the other hand, since $\grade (I)=2$, it follows that $\projdim_{R_\fkp} IR_\fkp=0$ $\Leftrightarrow$ $I\not\subseteq \fkp$, and $\projdim_{R_\fkp} IR_\fkp=1$ $\Leftrightarrow$ $IR_\fkp$ is perfect  in the sense of \cite[Definition 1.4.15]{BH}, in other words, a Cohen-Macaulay ideal.
\end{proof}







\section{The Bourbaki number of  graded torsionfree modules}\label{section3}

In this section,  $S=K[x_1, x_2, \dots, x_n]$ is a  polynomial ring of dimension $n\ge 2$ over an infinite field $K$.
For any finitely generated graded $S$-module $M$ of positive dimension $s$, the Hilbert function of $M$, which is defined as
$H_{M}(t)=\dim_K M_t$ for all $t\in \ZZ$, eventually agrees with a polynomial function of degree $s-1$.  Thus we may write
\[
\dim_K M_t=\mathrm{e}_0(M) \binom{t+s-1}{s-1} - \mathrm{e}_1(M) \binom{t+s-2}{s-2} + \cdots +(-1)^{s-1} \mathrm{e}_{s-1}(M)
\]
  for all $t\gg 0$, see \cite[Theorem 4.1.3]{BH}.  The integers $\mathrm{e}_0(M), \mathrm{e}_1(M), \dots, \mathrm{e}_{s-1}(M)$ are called the {\it Hilbert coefficients} of $M$.

\begin{Theorem}\label{a2.1}
Let $M$ be  a finitely generated torsionfree graded $S$-module of rank $r>1$. For $k\geq t_0(M)$ let
\begin{equation}\label{a3.1.1}
0 \to S(-k)^{r-1} \to M \to I(m) \to 0
\end{equation}
be  a graded Bourbaki sequence of $M$ with $\grade(I)\geq 2$. Then
$m=k{\cdot}(r-1) - \mathrm{e}_1 (M)$.
\end{Theorem}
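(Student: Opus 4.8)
The plan is to compare Hilbert functions across the Bourbaki sequence \eqref{a3.1.1}. Since the sequence is exact and each term is a finitely generated graded $S$-module, we have the additivity relation
\[
H_M(t) = (r-1)\,H_S(t-k) + H_{I}(t+m)
\]
for all $t \in \mathbb{Z}$. The key point is that $\grade(I) \ge 2$, so $\dim_K (S/I)_t$ is governed by a Hilbert polynomial of degree $\le n-3$; equivalently, $I$ agrees with $S$ up to a correction term of codimension $\ge 2$. Concretely, write $H_I(t) = H_S(t) - H_{S/I}(t)$, and recall that for $t \gg 0$
\[
H_S(t) = \binom{t+n-1}{n-1}.
\]
Since both $M$ and $I$ have rank $1$ times the respective ranks, the degree-$(n-1)$ terms of the two sides of the additivity relation already force $r = (r-1) + 1$, which is automatic, so the real information sits in the degree-$(n-2)$ coefficient.

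First I would expand everything as polynomials in $t$ for $t \gg 0$. On the right-hand side, $(r-1)H_S(t-k) = (r-1)\binom{t-k+n-1}{n-1}$, whose leading term is $(r-1)\frac{t^{n-1}}{(n-1)!}$ and whose next term, after extracting $t^{n-2}$, contributes $(r-1)\bigl(\frac{-k(n-1) + \binom{n-1}{2}}{(n-1)!}\bigr)t^{n-2}$ up to the part coming from the binomial $\binom{t+n-1}{n-1}$ itself; similarly $H_I(t+m) = \binom{t+m+n-1}{n-1} - H_{S/I}(t+m)$, and since $\dim S/I \le n-2$ the term $H_{S/I}(t+m)$ is $O(t^{n-3})$ and does not affect the $t^{n-2}$ coefficient. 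On the left-hand side, by definition of the Hilbert coefficients with $s = \dim M = n$ (here $M$ has rank $r>1$ over the domain $S$, hence $\dim M = n$),
\[
\dim_K M_t = \mathrm{e}_0(M)\binom{t+n-1}{n-1} - \mathrm{e}_1(M)\binom{t+n-2}{n-2} + \cdots,
\]
with $\mathrm{e}_0(M) = r$ since $M$ has rank $r$. Matching the coefficient of $\binom{t+n-1}{n-1}$ on both sides gives $r = (r-1)+1$, consistent. Matching the coefficient of $\binom{t+n-2}{n-2}$: on the left it is $-\mathrm{e}_1(M)$; on the right, shifting $\binom{t+n-1}{n-1}$ by an integer $a$ produces a $\binom{t+n-2}{n-2}$-coefficient equal to $a$ (since $\binom{t+a+n-1}{n-1} = \binom{t+n-1}{n-1} + a\binom{t+n-2}{n-2} + \cdots$), so we get $-\mathrm{e}_1(M) = (r-1)(-k) + m$, i.e. $m = k(r-1) - \mathrm{e}_1(M)$, as claimed.

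The cleanest way to organize this is to expand in the binomial basis $\{\binom{t+n-1}{n-1}, \binom{t+n-2}{n-2}, \dots\}$ rather than in powers of $t$, since a shift $t \mapsto t+a$ acts triangularly on this basis with the desired off-diagonal entry. I expect the main technical point — and the only place where the hypothesis $\grade(I)\ge 2$ is essential — to be the assertion that $H_{S/I}(t)$ has degree $\le n-3$, hence contributes nothing to the first two Hilbert coefficients of $I$: this uses that $\grade(I)\ge 2$ implies $\height(I) \ge 2$, so $\dim S/I \le n-2$ and the Hilbert polynomial of $S/I$ has degree $\le n-3$. Everything else is the bookkeeping of comparing the first two coefficients of three Hilbert polynomials, which I would not grind through in detail beyond recording the shift formula for binomial coefficients.
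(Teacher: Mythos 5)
Your proposal is correct and follows essentially the same route as the paper: both compare Hilbert functions along the Bourbaki sequence (with $H_I = H_S - H_{S/I}$, or equivalently the four-term sequence $0 \to S(-k)^{r-1} \to M \to S(m) \to (S/I)(m) \to 0$), expand in the binomial basis using the shift formula, and use $\grade(I)\geq 2 \Rightarrow \dim S/I \leq n-2$ to see that $H_{S/I}$ cannot contribute to the coefficient of $\binom{t+n-2}{n-2}$, which forces $m = k(r-1) - \mathrm{e}_1(M)$. No gaps; the remaining bookkeeping you deferred is exactly what the paper's displayed computation carries out.
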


\begin{proof}
By using the additivity of the Hilbert function on the graded exact sequence
\[
0 \to S(-k)^{r-1} \to M \to S(m) \to (S/I)(m) \to 0,
\]
we get that for $t\gg 0$,
{\small
\begin{align*}
H_{S/I}(t+m) =& H_{S}(t+m) -H_{M}(t) + (r-1){\cdot}H_{S}(t-k) \\
  =& \binom{t+m+n-1}{n-1} - H_{M}(t) + (r-1){\cdot}\binom{t-k+n-1}{n-1}\\
  =& \binom{t+n-1}{n-1} +m\binom{t+n-2}{n-2} -\left\{ \mathrm{e}_0(M)\binom{t+n-1}{n-1} - \mathrm{e}_1 (M)\binom{t+n-2}{n-2}\right\} \\
  &  + (r-1)\left\{ \binom{t+n-1}{n-1}-k\binom{t+n-2}{n-2}\right\} \\& +(\text{a polynomial in $t$ of  degree $<n-2$})\\
  =& \left\{1-\mathrm{e}_0(M)+(r-1)\right\}\binom{t+n-1}{n-1} +\left\{m+\mathrm{e}_1(M)-k(r-1)\right\}\binom{t+n-2}{n-2}\\
  &  + (\text{a polynomial in $t$ of  degree $<n-2$}).
\end{align*}
}
Since the dimension of $S/I$ is at most $n-2$,   the degree of the polynomial $H_{S/I}(t+m)$ in $t$ is at most $n-3$. Hence $m+\mathrm{e}_1(M)-k(r-1)=0$, which gives the desired formula for $m$.
%
\end{proof}



Theorem~\ref{a2.1} states that for a given $M$, the integer $m$ in the Bourbaki sequence \eqref{a3.1.1}  does not depend on the embedding of $S(-k)^{r-1}$ into $M$, but only on $k$, under the not so restrictive assumption that $\grade(I) \geq 2$ (see also Theorem~\ref{basic2.1}).

\begin{Definition}
We say that the integer $m$ in the exact sequence \eqref{a3.1.1} is the {\it Bourbaki number} of $M$  with respect to $k$.
\end{Definition}

If a graded  free resolution of $M$ is  known, $\mathrm{e}_1 (M)$ can be computed as follows.

\begin{Proposition}\label{a3.3}
Let $M$ be a finitely  generated graded $S$-module with $\dim M=\dim S$, and let $0\to F_p \to \dots \to F_1 \to F_0 \to M \to 0$ be  any finite graded free resolution of $M$. If  $F_i=\bigoplus_{j\in \mathbb{Z}}S(-j)^{b_{ij}}$ for $0\le i \le p$, then
\[
\mathrm{e}_1 (M)=\sum_{i=0}^{p}\sum_{j\in \mathbb{Z}} (-1)^{i}{\cdot}j{\cdot}b_{ij}.
\]
\end{Proposition}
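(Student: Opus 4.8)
The plan is to compute the Hilbert series of $M$ from the given free resolution, extract the second Hilbert coefficient $\mathrm{e}_1(M)$, and match it against the formula obtained by expanding the Hilbert polynomial. Recall that for a graded module $N=\bigoplus_{j}S(-j)^{b_j}$ the Hilbert series is $H_N(\lambda)=\left(\sum_j b_j \lambda^j\right)/(1-\lambda)^n$. By additivity of Hilbert series along the exact resolution $0\to F_p\to\cdots\to F_0\to M\to 0$, we get
\[
H_M(\lambda)=\frac{Q_M(\lambda)}{(1-\lambda)^n},\qquad \text{where } Q_M(\lambda)=\sum_{i=0}^{p}\sum_{j\in\ZZ}(-1)^i b_{ij}\lambda^j.
\]
Since $\dim M=\dim S=n$, the polynomial $Q_M(\lambda)$ does not vanish at $\lambda=1$ and $\mathrm{e}_0(M)=Q_M(1)$; moreover a standard computation (see \cite[Proposition 4.1.9]{BH} or Corollary thereof) gives $\mathrm{e}_1(M)=-Q_M'(1)$, where $Q_M'$ is the derivative. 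Differentiating $Q_M(\lambda)=\sum_{i,j}(-1)^i b_{ij}\lambda^j$ and evaluating at $\lambda=1$ yields $Q_M'(1)=\sum_{i,j}(-1)^i\,j\,b_{ij}$, whence
\[
\mathrm{e}_1(M)=-Q_M'(1)=-\sum_{i=0}^{p}\sum_{j\in\ZZ}(-1)^i\, j\, b_{ij}=\sum_{i=0}^{p}\sum_{j\in\ZZ}(-1)^{i+1}\,j\,b_{ij}.
\]
Comparing signs with the statement, I should double-check the orientation: writing $(-1)^{i+1}=-(-1)^i$, the displayed formula in the proposition has $(-1)^i\cdot j\cdot b_{ij}$, so the two agree up to the overall sign convention $\mathrm{e}_1(M)=-Q_M'(1)$ versus $\mathrm{e}_1(M)=Q_M'(1)$; one must align with the sign convention for Hilbert coefficients fixed just before Theorem \ref{a2.1} (where $\mathrm{e}_1$ appears with a minus sign in the expansion of $\dim_K M_t$), and I expect this to come out exactly as stated.

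Alternatively, and perhaps more transparently, one can argue directly with Hilbert polynomials: for each summand $S(-j)$ one has $\dim_K S(-j)_t=\binom{t-j+n-1}{n-1}=\binom{t+n-1}{n-1}-j\binom{t+n-2}{n-2}+(\text{lower order in }t)$. Summing with signs $(-1)^i b_{ij}$ over the resolution and using additivity of the Hilbert function, the coefficient of $\binom{t+n-2}{n-2}$ in the Hilbert polynomial of $M$ equals $-\sum_{i,j}(-1)^i j\, b_{ij}$. By the definition of the Hilbert coefficients recalled above, that same coefficient equals $-\mathrm{e}_1(M)$ (since $\dim M=n$, so $s=n$), and the formula follows. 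This is essentially the same computation already carried out in the proof of Theorem \ref{a2.1}.

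The only real subtlety — hardly an obstacle — is bookkeeping the sign and the normalization of $\mathrm{e}_1$, together with checking that the hypothesis $\dim M=\dim S$ guarantees $s=n$ so that the relevant binomial coefficient $\binom{t+n-2}{n-2}$ is genuinely the subleading term of the Hilbert polynomial rather than being absorbed into lower-order terms. Everything else is the routine alternating-sum manipulation of binomial coefficients, which I would not spell out in detail.
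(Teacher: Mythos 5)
Your proposal is correct, and your second, ``more transparent'' argument is exactly the paper's proof: expand $H_S(t-j)=\binom{t+n-1}{n-1}-j\binom{t+n-2}{n-2}+(\text{lower order})$, use additivity along the resolution, and read off the coefficient of $\binom{t+n-2}{n-2}$, which equals $-\mathrm{e}_1(M)$ because $\dim M=n$. Regarding the sign hedge in your Hilbert-series route: with the paper's (and Bruns--Herzog's) normalization $\dim_K M_t=\mathrm{e}_0\binom{t+s-1}{s-1}-\mathrm{e}_1\binom{t+s-2}{s-2}+\cdots$, one has $\mathrm{e}_1(M)=Q_M'(1)$ (not $-Q_M'(1)$), so that route also yields the stated formula $\mathrm{e}_1(M)=\sum_{i,j}(-1)^i j\, b_{ij}$ without any residual sign ambiguity.
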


\begin{proof}
Because
\begin{align*}
H_M(t)=& \sum_{i=0}^p (-1)^i H_{F_i}(t) = \sum_{i=0}^p (-1)^i\sum_{j\in\mathbb{Z}}b_{ij}H_S(t-j)\\
=& \sum_{i=0}^p \sum_{j\in\mathbb{Z}} (-1)^i b_{ij}\left\{ \binom{t+n-1}{n-1}-j{\cdot}\binom{t+n-2}{n-2}\right\} \\
 &+ (\text{a polynomial in $t$ of  degree $<n-2$})
\end{align*}
and $\dim M=n$, we have $-\mathrm{e}_1 (M)=\sum_{i=0}^{p}\sum_{j\in \mathbb{Z}} (-1)^{i}{\cdot}b_{ij}{\cdot}(-j).$
\end{proof}

Let us consider the graded Bourbaki ideals.
When $\projdim_S M=1$, the ideal $I$ in \eqref{a3.1.1} can be described
as an ideal of maximal minors of a certain matrix over $S$.

Assume $M$ has rank $r>1$ and let $k\geq  t_0(M)$.
Let
\[
0\to F_1 \xrightarrow{\psi_1} F_0 \xrightarrow{\pi} M \to 0 \quad \text{ and } \quad
0 \to S(-k)^{r-1} \xrightarrow{\varphi} M \xrightarrow{\varepsilon} I(m) \to 0
\]
be a graded free resolution of $M$ and a graded Bourbaki sequence with $\height (I)\ge 2$, respectively.
Since $S(-k)^{r-1}$ is a projective module,   there exists a graded $S$-module homomorphism
 $\psi_2: S(-k)^{r-1}\to F_0$ such that $\pi\circ \psi_2=\varphi$.
Then $I$ can be obtained  as follows.

\begin{Proposition}\label{a3.4}
Assume $M$ is a finitely generated graded module which is torsionfree of rank  $r>1$ with  $\projdim_S M=1$.   With the above notation we set  $F_0=\bigoplus_{i=1}^\alpha S(-a_i)$ and $F_1=\bigoplus_{j=1}^\beta S(-b_j)$.
Then
\[
0\to F_1\oplus S(-k)^{\alpha-\beta-1} \xrightarrow{(\psi_1, \psi_2)} F_0 \xrightarrow{\varepsilon \circ \pi} I(m) \to 0
\]
 is a graded free resolution of $I(m)$, $I=I_{\alpha-1} (\psi_1, \psi_2)$, and
$$m=\sum_{j=1}^\beta b_j - \sum_{i=1}^\alpha a_i +k(\alpha-\beta-1).$$

Furthermore, if $M$ is generated in degree $k$, then there exist a matrix $A$ representing $\psi$
and a $(\beta+1) \times \beta$ submatrix $A'$ of  $A$ such that $I=I_{\beta} (A')$.
\end{Proposition}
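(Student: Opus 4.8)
The plan is to (i) check that the displayed complex is exact, (ii) identify $I$ as an ideal of maximal minors via the Hilbert--Burch theorem, (iii) compute $m$ from Theorem~\ref{a2.1} and Proposition~\ref{a3.3}, and (iv) deduce the last assertion from a base change that turns $I_{\alpha-1}(\psi_1,\psi_2)$ into a Laplace expansion. For exactness, first note that $\alpha-\beta-1=r-1$, since $\rank F_0-\rank F_1=\rank M=r$, so the source of the map really is $F_1\oplus S(-k)^{r-1}$. The map $\varepsilon\circ\pi$ is onto, being a composite of surjections. From $\pi\circ\psi_1=0$, $\pi\circ\psi_2=\varphi$ and $\varepsilon\circ\varphi=0$ one gets $\varepsilon\circ\pi\circ(\psi_1,\psi_2)=0$, hence $\Im(\psi_1,\psi_2)\subseteq\Ker(\varepsilon\circ\pi)$; conversely, if $\varepsilon(\pi(x))=0$ then $\pi(x)\in\Ker\varepsilon=\Im\varphi$, say $\pi(x)=\varphi(y)=\pi(\psi_2(y))$, so $x-\psi_2(y)\in\Ker\pi=\Im\psi_1$ and $x\in\Im(\psi_1,\psi_2)$. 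Finally $(\psi_1,\psi_2)$ is injective: if $\psi_1(z)+\psi_2(y)=0$, applying $\pi$ gives $\varphi(y)=0$, hence $y=0$ since $\varphi$ is injective, and then $\psi_1(z)=0$, hence $z=0$ since $\psi_1$ is injective. Thus the displayed complex is a graded free resolution of $I(m)$ and $\projdim_S I(m)\le 1$.

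Forgetting the grading, this resolution reads $0\to S^{\alpha-1}\xrightarrow{(\psi_1,\psi_2)}S^{\alpha}\to I\to 0$, so the Hilbert--Burch theorem \cite[Theorem~1.4.17]{BH} provides a nonzerodivisor $a\in S$ with $I=a\cdot I_{\alpha-1}(\psi_1,\psi_2)$ and $\grade I_{\alpha-1}(\psi_1,\psi_2)\ge 2$. Since $S$ is a polynomial ring over a field it is Cohen--Macaulay and factorial, so for an ideal grade $\ge 2$ is equivalent to height $\ge 2$, which is equivalent to being contained in no height-one (hence principal) prime, i.e.\ to having unit gcd. Thus $\gcd I_{\alpha-1}(\psi_1,\psi_2)$ is a unit and $\gcd I$ equals $a$ up to a unit; but $\height I\ge 2$ by hypothesis, so $\gcd I$ is a unit, forcing $a$ to be a unit and $I=I_{\alpha-1}(\psi_1,\psi_2)$. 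This is the step I expect to demand the most care: Hilbert--Burch alone gives only that $I$ is a colon multiple of the minor ideal, and one needs factoriality together with $\height(I)\ge 2$ to upgrade this to an equality.

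For $m$: by Theorem~\ref{a2.1}, $m=k(r-1)-\mathrm{e}_1(M)$, and Proposition~\ref{a3.3} applied to $0\to F_1\to F_0\to M\to 0$ gives $\mathrm{e}_1(M)=\sum_{i=1}^{\alpha}a_i-\sum_{j=1}^{\beta}b_j$; substituting and using $r-1=\alpha-\beta-1$ yields $m=\sum_{j=1}^{\beta}b_j-\sum_{i=1}^{\alpha}a_i+k(\alpha-\beta-1)$. (Alternatively one computes $\mathrm{e}_1(I(m))$ from the resolution of the first step via Proposition~\ref{a3.3} and uses $m=-\mathrm{e}_1(I(m))$, which holds because $\mathrm{e}_0(I)=1$ and $\mathrm{e}_1(I)=0$, the latter since $\dim S/I\le n-2$.)

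For the last assertion, take the minimal graded free resolution of $M$; since $M$ is generated in degree $k$ this forces $F_0=S(-k)^{\alpha}$, so $\psi_2\colon S(-k)^{r-1}\to S(-k)^{\alpha}$ is homogeneous of degree $0$, hence given by a constant matrix $C\in K^{\alpha\times(r-1)}$, which has full column rank $r-1$ because $\psi_2$ is injective (as $\pi\circ\psi_2=\varphi$ is). Replacing $C$ by $QC$ for a suitable $Q\in\GL_{\alpha}(K)$ — that is, choosing a convenient homogeneous basis of $F_0$ — we may assume $C$ has a $1$ in the positions $(1,1),\dots,(r-1,r-1)$ and $0$ elsewhere. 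Let $A$ be the resulting $\alpha\times(\alpha-1)$ matrix of $(\psi_1,\psi_2)$ and $A'$ the submatrix of $A$ on the rows $r,\dots,\alpha$ (there are $\alpha-(r-1)=\beta+1$ of them) and the first $\beta$ columns, which are those coming from $\psi_1$. Expanding the $(\alpha-1)$-minors of $A$ along its last $r-1$ columns: a minor omitting one of the rows $1,\dots,r-1$ has a zero column and vanishes, while a minor omitting one of the last $\beta+1$ rows equals, up to sign, the $\beta$-minor of $A'$ obtained by deleting the corresponding row. As these exhaust all $\beta$-minors of $A'$, we conclude $I=I_{\alpha-1}(\psi_1,\psi_2)=I_{\alpha-1}(A)=I_{\beta}(A')$, which is the claimed $(\beta+1)\times\beta$ submatrix statement.
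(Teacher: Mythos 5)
Your proof is correct and follows essentially the same route as the paper: verify exactness of the displayed complex, apply Hilbert--Burch (with $\grade(I)\geq 2$, via factoriality, to make the extra nonzerodivisor a unit), compute $m$ from Theorem~\ref{a2.1} and Proposition~\ref{a3.3}, and normalize the constant matrix of $\psi_2$ to an identity block so that the $(\alpha-1)$-minors reduce to the $\beta$-minors of the lower $(\beta+1)\times\beta$ block. The only cosmetic differences are that you prove injectivity of $(\psi_1,\psi_2)$ by a direct diagram chase where the paper uses a rank/torsionfreeness argument, and you spell out the gcd argument that the paper leaves inside its citation of Hilbert--Burch.
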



\begin{proof}
Note that $r=\alpha-\beta$. It is easy to check that
$$F_1\oplus S(-k)^{\alpha-\beta-1} \xrightarrow{(\psi_1, \psi_2)} F_0 \xrightarrow{\varepsilon \circ \pi} I(m) \to 0$$
 is a graded exact sequence of $S$-modules. Since $\Ker (\psi_1, \psi_2)$ is a torsionfree $S$-module of rank $\beta+(\alpha-\beta-1)-\alpha+1=0$, it follows that  $\Ker (\psi_1, \psi_2)=0$.
Therefore, since $\grade (I) \geq 2$,  the Hilbert-Burch theorem \cite[Theorem 1.4.17]{BH} implies  that  $I=I_{\alpha-1} (\psi_1, \psi_2)$.
The displayed formula for $m$ follows by a simple computation from Theorem~\ref{a2.1} and Proposition~\ref{a3.3}.

Suppose that $M$ is generated in degree $k$.
Since $\psi_2: S(-k)^{\alpha-\beta-1} \to F_0=S(-k)^{\alpha}$ is an injective graded map of degree $0$, we may pick bases of
$S(-k)^{\alpha-\beta-1}$ and of $F_0$ such that the  matrix representing $\psi_2$ is
\begin{equation*}
\left(
\begin{array}{ccc}
1&& \\
&\ddots&\\
&&1\\ \hline
&&\\
&\mbox{\Large{0}}&
\end{array}
\right).
\end{equation*}
We pick any free basis of $F_1$ and we denote $A=(a_{ij})$ the matrix representing $\psi_1$ in the fixed bases.
Then
\begin{align*}
I=&I_{\alpha-1} (\psi_1, \psi_2)=I_{\alpha-1}\left(
\begin{array}{ccc|ccc}
a_{11}&\cdots&a_{1\beta}&1&& \\
\vdots&\ddots&\vdots&&\ddots&\\
a_{\alpha-\beta-1\ 1}&\cdots&a_{\alpha-\beta-1\ \beta}&&&1\\ \hline
a_{\alpha-\beta\ 1}&\cdots&a_{\alpha-\beta\ \beta}&&&\\
\vdots&\ddots&\vdots&&\mbox{\Large{0}}&\\
a_{\alpha\ 1}&\cdots&a_{\alpha\ \beta}&&&
\end{array}
\right)\\
=&I_{\beta}\left(
\begin{array}{ccc}
a_{\alpha-\beta\ 1}&\cdots&a_{\alpha-\beta\ \beta}\\
\vdots&\ddots&\vdots\\
a_{\alpha\ 1}&\cdots&a_{\alpha\ \beta}
\end{array}
\right).
\end{align*}
\end{proof}







\section{Characterization of  Bourbaki sequences}\label{section3.5}

In this section we consider maps $\varphi:R^s\to M$ and enquire whether $\Coker(\varphi)$ is a torsionfree module.
This applies, when $R$ is a normal domain and $M$ a torsionfree $R$-module of rank $r$, to characterize
the maps $\varphi:R^{r-1}\to M$ which are part of a Bourbaki sequence like \eqref{bbb}.

In the following lemma we present characterizations for torsionfree and for reflexive modules, respectively.
For the convenience of the reader, we include the proof.

\begin{Lemma}\label{a3.5}
Let $R$ be a Noetherian ring and $M$ a finitely generated $R$-module. Suppose that $R$ is generically Gorenstein, that is, $R_\pp$  is Gorenstein for all $\pp\in \Ass(R)$.
\begin{enumerate}[{\rm (a)}]
\item The following conditions are equivalent:
\begin{enumerate}[{\rm (i)}]
\item $M$ is torsionfree;
\item $\Ass(M)\subseteq \Ass(R)$;
\item there is an exact sequence $0\to M \to F$, where $F$ is a finitely generated free $R$-module;
\item  the canonical map  $\varepsilon:M\to M^{**}$  is injective.
\end{enumerate}
\item The following conditions are equivalent:
\begin{enumerate}[{\rm (i)}]
\item $M$ is reflexive;
\item  there is an exact sequence $0\to M \to F \to G$, where $F$ and $G$ are finitely generated free $R$-modules.
\end{enumerate}
\end{enumerate}
\end{Lemma}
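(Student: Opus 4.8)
The plan is to prove the two equivalences by exhibiting in each case the familiar long exact sequences coming from dualizing, and then to use the generically Gorenstein hypothesis only to control the behaviour at associated primes. For part (a), the implications (iii)$\Rightarrow$(ii) and (ii)$\Rightarrow$(i) are straightforward: a submodule of a free module has $\Ass(M)\subseteq \Ass(F)=\Ass(R)$, and if $\Ass(M)\subseteq\Ass(R)$ then no associated prime of $M$ contains a nonzerodivisor of $R$, so the canonical map $M\to Q(R)\otimes_R M$ is injective, i.e. $M$ is torsionfree. The implication (iv)$\Rightarrow$(iii) is immediate since $M^{**}$ embeds into a free module (pick a surjection $R^b\to M^*$ and dualize). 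So the only real work in (a) is (i)$\Rightarrow$(iv), or equivalently (i)$\Rightarrow$(iii).

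For (i)$\Rightarrow$(iii) I would argue locally at the associated primes. Choose a surjection $R^b\to M^*$ and dualize to get $\varepsilon: M\to M^{**}\hookrightarrow R^b$; call $\varepsilon':M\to R^b$ the composite. I claim $\varepsilon'$ is injective. Its kernel $K$ has $\Ass(K)\subseteq\Ass(M)\subseteq\Ass(R)$ by torsionfreeness (using (i)$\Rightarrow$(ii), already shown), so it suffices to show $K_\pp=0$ for every $\pp\in\Ass(R)$. Localizing at such a $\pp$, the ring $R_\pp$ is Gorenstein of dimension $0$, hence self-injective, so every finitely generated $R_\pp$-module is reflexive; in particular the canonical map $M_\pp\to M_\pp^{**}$ is an isomorphism, and $\varepsilon'_\pp:M_\pp\to R_\pp^b$ is the composite of an isomorphism with an injection, hence injective. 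Thus $K_\pp=0$ for all $\pp\in\Ass(R)\supseteq\Ass(K)$, forcing $K=0$. This proves (iii), and also (iv) once one notes that $\varepsilon'$ factors through $\varepsilon$, so $\varepsilon$ is injective too.

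For part (b), the implication (i)$\Rightarrow$(ii) is formal: if $M$ is reflexive, apply $\Hom_R(-,R)$ twice to a presentation $R^c\to R^b\to M^*\to 0$ to obtain an exact sequence $0\to M^{**}\to R^b\to R^c$, and identify $M^{**}$ with $M$. For (ii)$\Rightarrow$(i), suppose $0\to M\xrightarrow{f} F\xrightarrow{g} G$ is exact with $F,G$ free. Dualizing $F\xrightarrow{g}G$, let $C=\Coker(g^*:G^*\to F^*)$; then $M^*\cong \Ker(\ldots)$ dually, and more usefully one gets an exact sequence $0\to C^*\to F^{**}\to\ldots$. The clean way is: from $0\to M\to F\to N\to 0$ with $N=\operatorname{im}(g)\subseteq G$, the module $N$ is torsionfree, hence by part (a) satisfies $\Ext^1_R(N^*,\ldots)$-type control; dualizing twice gives a commutative diagram with exact rows comparing $0\to M\to F\to N$ and $0\to M^{**}\to F^{**}\to N^{**}$, and since $F\to F^{**}$ is an isomorphism and $M\to M^{**}$ is injective by part (a), a diagram chase shows $M\to M^{**}$ is also surjective. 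I expect this last diagram chase — getting surjectivity of $M\to M^{**}$ from the snake lemma applied to the vertical biduality maps — to be the one genuinely fiddly point; the generically Gorenstein hypothesis enters only through part (a), which guarantees $M\to M^{**}$ is injective and that $N$ is torsionfree so that $N\to N^{**}$ is injective as well, giving the diagram enough exactness for the chase to close.
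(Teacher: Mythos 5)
Your part (a) and the implication (i)$\Rightarrow$(ii) of part (b) are fine and essentially coincide with the paper's argument (localize at $\pp\in\Ass(R)$, use that $R_\pp$ is zero-dimensional Gorenstein so biduality is an isomorphism there, and control $\Ass$ of the kernel). The problem is in (b), (ii)$\Rightarrow$(i). Your diagram chase needs the bottom row to be exact at $M^{**}$, i.e.\ the injectivity of $\psi^{**}\:M^{**}\to F^{**}$ (where $\psi\:M\to F$): without it, given $z\in M^{**}$ your argument only produces $m\in M$ with $\psi^{**}(\varepsilon_M(m))=\psi^{**}(z)$, hence $z\in \operatorname{im}(\varepsilon_M)+\Ker(\psi^{**})$, and surjectivity of $\varepsilon_M$ does not follow. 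This injectivity is not automatic: dualizing $0\to M\to F\to N\to 0$ gives $0\to N^*\to F^*\to M^*\to \Ext^1_R(N,R)\to 0$, and factoring $\psi^*$ through its image one computes $\Ker(\psi^{**})\iso \Ext^1_R(N,R)^*$. So the missing step is precisely the vanishing of $\Ext^1_R(N,R)^*$, and your closing claim that ``the generically Gorenstein hypothesis enters only through part (a)'' is incorrect: it is needed a second time here. (The vague phrase ``$\Ext^1_R(N^*,\dots)$-type control'' does not supply this.)

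The gap is repairable by the same two devices you already use: $\Ext^1_R(N,R)^*$, being the dual of a finitely generated module, embeds in a free module and is therefore torsionfree by part (a); and for every $\pp\in\Ass(R)$ the ring $R_\pp$ is zero-dimensional Gorenstein, hence self-injective, so $\Ext^1_{R_\pp}(N_\pp,R_\pp)=0$ and $(\Ext^1_R(N,R)^*)_\pp=0$. Since $\Ass(\Ext^1_R(N,R)^*)\subseteq\Ass(R)$, this forces $\Ext^1_R(N,R)^*=0$, i.e.\ $\psi^{**}$ is injective, and then your chase (which uses injectivity of $\varepsilon_N$, guaranteed by (a) since $N$ embeds in $G$) does close. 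This is exactly the step the paper isolates as ``$N=\Ext^1_R(X,R)^*=0$''; the paper then finishes slightly differently, identifying $\Coker(\varepsilon_M)$ with a submodule of the torsionfree module $X=N$ via the snake lemma and killing it at the associated primes, but that difference is cosmetic once the injectivity of $\psi^{**}$ is in place.
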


\begin{proof}   Note that $\Ass (R)=\Min (R)$ since $R_\frk{p}$ is an Artinian  Gorenstein ring for all $\frk{p}\in\Ass(R)$.

(a) The implications
(iv)\implies (iii) $\Rightarrow$ (ii) $\Rightarrow$ (i) are clear.
Now suppose (i) holds, that is, the sequence $0\to M \to \mathrm{Q}(R)\otimes M$ is exact. Then $$\Ass (M)\subseteq \Ass (\mathrm{Q}(R)\otimes M)=\Ass(M)\cap \Ass(R)$$ since $\Ass (R)=\Min (R)$. It follows that $\Ass(M)\subseteq \Ass(R)$.

Let $\frk{p}\in \Ass(R)$. Since $R_\frk{p}$ is an Artinian  Gorenstein ring, by Matlis duality  we get that
$\varepsilon_{\frk{p}}$ is a bijective map. Thus $\Ass (R) \cap \Ass(\Ker (\varepsilon)) =\emptyset$.
Since $\Ass(\Ker(\varepsilon))\subseteq \Ass(M)\subseteq \Ass (R)$, we obtain that  the map $\varepsilon$ is injective, which proves (iv).

(b) (i) \implies (ii) is clear.

(ii) \implies (i): Suppose that $0\to M \xrightarrow{\psi} F \to G$ is an exact sequence, where $F$ and $G$ are finitely generated free $R$-modules. Set $X=\Coker(\psi)$. $X$ is torsionfree by (a).
By applying $\Hom_R(-, R)$ to the exact sequence $0\to M\xrightarrow{\psi} F\to X\to 0$, we obtain the exact sequence
\[
0\to X^{*} \to F^* \xrightarrow{\psi^*} M^* \to \Ext_R^1 (X, R) \to 0
\]
of $R$-modules.
Let $N=\Ext_R^1 (X, R)^*$. For any $\fkp \in \Ass (R)$, $R_\fkp$ is an Artinian Gorenstein ring, hence $\Ext^1_{R_\fkp}(X_\fkp, R_\fkp)=0$. The latter implies $N_\fkp = 0.$ Since $N$ is torsionfree, arguing as before, we obtain that $N=0$.

Hence, by applying $\Hom_R(-, R)$ again, we obtain the commutative diagram
\[
\xymatrix{
0 \ar[r] & M\ar[r]^{\psi} \ar[d]_{\varepsilon} &F \ar[r] \ar@{=}[d] & X\ar[r] \ar[d] & 0\\
0 \ar[r] & M^{**}\ar[r]^{\psi^{**}}  & F^{**} \ar[r] & \Coker(\psi^{**}) \ar[r] & 0\\
}
\]
with exact rows. The cokernel of the canonical map $\varepsilon: M\to M^{**}$ is isomorphic to the kernel of the map $X\to \Coker(\psi^{**})$, whence $\Coker(\varepsilon)$ is torsionfree.
Since $\varepsilon_{\frk{p}}$ is bijective for all $\frk{p}\in \Ass (R)$, $\varepsilon$ is bijective. It follows that $M$ is a reflexive module.
\end{proof}

\begin{Theorem}\label{a4.2}
Let $R$ be a normal domain of dimension $\geq 2$ and $M$ a finitely generated reflexive $R$-module, and let  $\varphi: R^{s} \to M$ be an $R$-module homomorphism.
 Let $0\to M \xrightarrow{\iota} F \to X \to 0$ be an exact sequence  of $R$-modules with  $F$  free and $X$  torsionfree.
Then the following conditions are equivalent:
\begin{enumerate}
\item[{\rm (i)}] the map $\varphi$ is injective and  $\Coker(\varphi)$ is torsionfree;
\item[{\rm (ii)}] $\height (I_{s}(\iota\circ \varphi)) \geq 2$.
\end{enumerate}
\end{Theorem}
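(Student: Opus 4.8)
The plan is to reduce the statement to a claim about a map between free modules, using the composition $\phi:=\iota\circ\varphi\colon R^s\to F$. Since $\iota$ is injective, $\varphi$ is injective if and only if $\phi$ is. Moreover, from $\im\phi=\iota(\im\varphi)\subseteq\iota(M)\subseteq F$ one obtains a natural exact sequence
\[
0\to\Coker(\varphi)\xrightarrow{\ \bar\iota\ }\Coker(\phi)\to X\to 0,
\]
in which $\bar\iota$ is injective precisely because $\iota$ is. As $X$ is torsionfree, I would argue that $\Coker(\varphi)$ is torsionfree if and only if $\Coker(\phi)$ is: a submodule of a torsionfree module is torsionfree for one implication, and for the converse the torsion submodule of $\Coker(\phi)$ maps to $0$ in the torsionfree module $X$, hence lands inside $\Coker(\varphi)$, where it must vanish. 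Since $I_s(\phi)=I_s(\iota\circ\varphi)$ by definition, the theorem is equivalent to the following reduced statement: \emph{for $\phi\colon R^s\to F$ with $F$ free, $\phi$ is injective with torsionfree cokernel if and only if $\height I_s(\phi)\ge 2$.}

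For the reduced statement I would first record that, over the domain $R$, the map $\phi$ is injective iff $\rk\phi=s$ iff $I_s(\phi)\ne 0$ iff $\height I_s(\phi)\ge 1$ (the kernel of $\phi$ is torsionfree of rank $0$, hence $0$). So in both directions we may assume $\phi$ injective and set $C:=\Coker(\phi)$, giving $0\to R^s\xrightarrow{\phi}F\to C\to 0$ and hence $\projdim_{R_\pp}C_\pp\le 1$ for every prime $\pp$. By Lemma~\ref{a3.5}(a), applicable since $R$ is a domain (so generically Gorenstein), $C$ is torsionfree if and only if $\Ass(C)\subseteq\{(0)\}$. For the implication (ii)$\Rightarrow$(i): if some $\pp\in\Ass(C)$ were nonzero, then $\depth_{R_\pp}C_\pp=0$, so the Auslander--Buchsbaum formula gives $\depth R_\pp=\projdim_{R_\pp}C_\pp\le 1$; it cannot be $0$, else $\pp\in\Ass(R)=\{(0)\}$, so $\depth R_\pp=1$ and $C_\pp$ is not free, forcing $I_s(\phi)_\pp\ne R_\pp$, i.e. $I_s(\phi)\subseteq\pp$. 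Because a normal domain satisfies Serre's condition $(S_2)$, the equality $\depth R_\pp=1$ forces $\height\pp=1$, so $\height I_s(\phi)\le 1$. Contrapositively, $\height I_s(\phi)\ge 2$ yields $\Ass(C)\subseteq\{(0)\}$, i.e. $C$ torsionfree (and $I_s(\phi)\ne 0$ gives $\phi$ injective).

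For the implication (i)$\Rightarrow$(ii): with $\phi$ injective and $C$ torsionfree, so $I_s(\phi)\ne 0$, suppose $\height I_s(\phi)=1$ and choose a height-one prime $\pp$ minimal over $I_s(\phi)$. Since $R$ is a normal domain, $R_\pp$ is a DVR, so the finitely generated torsionfree module $C_\pp$ is free; then the localized sequence $0\to R_\pp^s\to F_\pp\to C_\pp\to 0$ splits, and a split injection between free modules has a unit among its maximal minors, so $I_s(\phi_\pp)=R_\pp$, contradicting $I_s(\phi)\subseteq\pp$. Hence $\height I_s(\phi)\ge 2$. The reduction in the first paragraph is routine but must be done carefully so that the ideal appearing is literally $I_s(\iota\circ\varphi)$; the step I expect to be the real content — and the reason the hypothesis is \emph{normal} and the conclusion is about \emph{height} rather than \emph{grade} — is the use of Serre's condition $(S_2)$ in the second paragraph together with the regularity of $R$ in codimension one in the third.
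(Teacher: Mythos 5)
Your proof is correct and follows essentially the same route as the paper: reduce to $C=\Coker(\iota\circ\varphi)$ via the exact sequence $0\to\Coker(\varphi)\to C\to X\to 0$, characterize injectivity by $I_s(\iota\circ\varphi)\neq 0$, and then use $\projdim_{R_\pp}C_\pp\le 1$, Auslander--Buchsbaum, and normality (Serre's $(S_2)$ at primes of height $\ge 2$, $(R_1)$ so that $R_\pp$ is a DVR at height one) together with the fact that the maximal minors detect split monomorphisms. The only differences are presentational (two separate implications and a torsion-submodule argument instead of the paper's single chain of equivalences via associated primes and the depth criterion), so no changes are needed.
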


\begin{proof} (i)\iff (ii):
We can rephrase the condition that $\varphi$ is injective as follows.
\begin{alignat*}{2}
\text{$\varphi$ is injective} \quad&\Leftrightarrow \text{$\iota\circ \varphi$ is injective} &\quad &\Leftrightarrow \text{$(\iota\circ \varphi)_{(0)}$ is injective} \\
&\Leftrightarrow \text{$I_{s}(\iota\circ \varphi)_{(0)}=R_{(0)}$} &&\Leftrightarrow \text{$\height(I_{s}(\iota\circ \varphi))>0$.}
\end{alignat*}
Hence we may assume that $\varphi$ is injective.

Set $N=\Coker (\varphi)$ and $C=\Coker (\iota\circ \varphi)$. Consider the commutative diagram
\begin{eqnarray}
\label{diagram}
\begin{split}
\xymatrix{
&&0 \ar[d]&0 \ar[d]&\\
0 \ar[r] & R^{s}\ar[r]^{\varphi} \ar@{=}[d] &M \ar[r] \ar[d]^{\iota} & N\ar[r] \ar[d] & 0\\
0 \ar[r] & R^{s}\ar[r]_{\iota\circ \varphi}  & F \ar[r] \ar[d] & C \ar[r] \ar[d] & 0 \\
&&X\ar@{=}[r] \ar[d]&X \ar[d]&\\
&&0&0&.\\
}
\end{split}
\end{eqnarray}
Then
\begin{center}
$N$ is torsionfree $\Leftrightarrow$ $\Ass (N) \subseteq\{0\}$ $\Leftrightarrow$ $\Ass (C) \subseteq\{0\}$ $\Leftrightarrow$ $C$ is torsionfree,
\end{center}
where the first and third equivalence follow from Lemma \ref{a3.5} and the second equivalence follows from the inclusions
$$\Ass (N) \subseteq \Ass (C)\subseteq \Ass (N) \cup \Ass (X) \subseteq \Ass (N) \cup \{0\}.$$ 

On the other hand, by \cite[Proposition 1.4.1(a)]{BH},  $C$ is torsionfree if and only if $\depth C_\frk{p}>0$ for all $\frk{p}\in\Spec R$ with $\height (\frk{p})\ge 1$.
Let $\frk{p}$ be a prime ideal of $R$. By (\ref{diagram}), $\projdim _{R_\frk{p}} C_\frk{p} \le 1$. From the Auslander-Buchsbaum theorem we obtain $\depth_{R_\frk{p}} C_\frk{p} = \depth R_\frk{p} - \projdim_{R_{\frk{p}}} C_\frk{p}$.

If $\height(\frk{p}) \geq 2$, since $R$ is a normal ring it satisfies Serre's condition $(S_2)$, hence $\depth {R_\frk{p}} \geq 2$.
Thus  $\depth_{R_\frk{p}} C_\frk{p} >0$.

If $\height(\frk{p})=1$, since $R$ satisfies the condition $(R_1)$ we get that $R_{\frk{p}}$ is a regular local ring. Consequently,
$\depth_{R_\frk{p}} C_\frk{p} = 1 - \projdim_{R_{\frk{p}}} C_\frk{p}$.

Hence, the torsionfreeness of $C$ is equivalent to saying that $C_\frk{p}$ is $R_\frk{p}$-free for any height one prime ideal $\frk{p}$ of $R$. By \cite[Proposition 1.4.10]{BH} the latter condition is equivalent to saying that $(\iota\circ \varphi)_\frk{p}$ is a split monomorphism for any height one prime $\frk{p}$, which in turn is equivalent to  $I_{s}(\iota\circ \varphi)_\frk{p}=R_\frk{p}$ for any height one prime $\frk{p}$.

Therefore, $N$ is a torsionfree module if and only if $\height (I_{s}(\iota\circ \varphi))\geq 2$.
\end{proof}

Assume $R$ is graded and $M$ is a graded $R$-module of rank  $r>1$. We will prove that under mild assumptions on $R$ and $M$, given $k \gg 0$ and a $K$-basis $m_1,\dots, m_\alpha$ of $M_k$, then any $r-1$ generic $K$-linear combinations of $m_1,\dots, m_\alpha$ generate a free submodule of $M$ which is the beginning of a graded Bourbaki sequence of $M$.

\begin{Lemma}\label{b4.3}
Let $R=\bigoplus_{n\geq 0} R_n$ be a graded Noetherian ring such that $R_0=K$ is an algebraically closed field.
Let $T=R[z_1, \dots, z_m]$ be the polynomial ring over $R$. We regard $T$ as a graded ring by using the grading of $R$ and $\deg(z_i)=0$ for $1\le i\le m$. Let $I$ be a graded ideal of $T$ such that $I\subseteq \bigoplus_{n> 0} T_n$ and set $S=T/I$. For $\lambda=(\lambda_1, \dots, \lambda_m)\in K^m$, we denote by $\fkp_\lambda$ the maximal ideal $(z_1-\lambda_1, \dots, z_m-\lambda_m)$  of $K[z_1, \dots, z_m]$. Then
\[
\{\lambda\in K^m \mid \dim S/\fkp_\lambda S<e\}
\]
is a Zariski open subset of $K^m$ for any integer $e$.
\end{Lemma}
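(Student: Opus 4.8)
The statement is a semicontinuity assertion: as $\lambda$ varies over $K^m$, the dimension of the fiber $S/\fkp_\lambda S$ is upper semicontinuous, so the locus where it drops below $e$ is open. The natural framework is to view the family $S = T/I$ as a ring mapping to $K[z_1,\dots,z_m]$ via the inclusion $K[z_1,\dots,z_m]\hookrightarrow T$, so that $\Spec S \to \Spec K[z_1,\dots,z_m]=\AA^m_K$ is a morphism of schemes whose closed fiber over the point $\lambda$ is $\Spec(S/\fkp_\lambda S)$. The plan is to reduce to the classical upper semicontinuity of fiber dimension for a morphism of finite type (\cite[e.g.\ EGA IV or Hartshorne Exercise II.3.22]{}), but there is a subtlety: $S$ need not be finitely generated over $K[z_1,\dots,z_m]$, since $R$ — and hence $T$ — may be infinitely generated over $K$. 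So the first thing I would do is pass to a finitely generated subring of $R$ that carries all the data.

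The key reduction: $I$ is generated by finitely many elements $f_1,\dots,f_s\in T=R[z_1,\dots,z_m]$; each $f_j$ has finitely many coefficients in $R$. Let $R_0'$ be the $K$-subalgebra of $R$ generated by those finitely many coefficients together with the finitely many homogeneous generators needed — actually it suffices to take $R'\subseteq R$ to be any finitely generated graded $K$-subalgebra containing all the $R$-coefficients of $f_1,\dots,f_s$. Set $T'=R'[z_1,\dots,z_m]$, $I'=(f_1,\dots,f_s)T'$, and $S'=T'/I'$. Then $S = S'\otimes_{R'}R$, and for each $\lambda$ one has $S/\fkp_\lambda S = (S'/\fkp_\lambda S')\otimes_{R'}R$. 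I claim $\dim S/\fkp_\lambda S = \dim S'/\fkp_\lambda S'$: indeed $S'/\fkp_\lambda S'$ is a finitely generated graded $K$-algebra, $R$ is $R'$-faithfully flat is too much to hope for in general, but the base change from $R'$ to $R$ along a graded ring extension with $R'_0=R_0=K$ preserves Krull dimension of graded quotients — more carefully, $S/\fkp_\lambda S$ is a quotient of $(S'/\fkp_\lambda S')\otimes_{R'}R$, and dimension is detected by the Hilbert polynomial in the grading inherited from $R$, which only involves the images of finitely many generators, so one can arrange $R'$ large enough that the Hilbert functions agree in a fixed range of degrees. Thus it is harmless to assume from the start that $R$ is a finitely generated graded $K$-algebra, hence $T$ is Noetherian and finitely generated over $K$, hence $S$ is a finitely generated $K[z_1,\dots,z_m]$-algebra.

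With $S$ of finite type over $A:=K[z_1,\dots,z_m]$, consider the morphism $g\colon Y:=\Spec S\to X:=\AA^m_K=\Spec A$. By Chevalley's theorem on upper semicontinuity of fiber dimension for morphisms of finite type between Noetherian schemes, for every integer $e$ the set
\[
Y_e = \{\, y\in Y \mid \dim_y g^{-1}(g(y)) \ge e \,\}
\]
is closed in $Y$. The function $\lambda\mapsto \dim g^{-1}(\lambda) = \dim(S/\fkp_\lambda S)$ equals $\max\{\, e : \lambda\in g(Y_e)\,\}$; since $g$ restricted to the closed set $Y_e$ is still of finite type and we want to descend to $X$, I would instead argue directly: the set $\{\lambda : \dim(S/\fkp_\lambda S)\ge e\}$ is the image under $g$ of $Y_e$ intersected appropriately — but images of closed sets need not be closed. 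The cleaner route is to use the standard statement (Hartshorne, Ex.\ II.3.22, or EGA IV 13.1.3): if $g$ is of finite type and $X$ is Noetherian, then for each $e$ the locus $\{x\in X : \dim g^{-1}(x) \ge e\}$ is... not closed in general, but it \emph{is} constructible, and the function is upper semicontinuous on $Y$. To get openness on $X$ one uses that $K$ is algebraically closed so closed points are dense and the function $\lambda \mapsto \dim(S/\fkp_\lambda S)$ on closed points determines the constructible set; combined with upper semicontinuity along the source and a generic flatness argument, $\{\lambda : \dim S/\fkp_\lambda S < e\}$ is open.

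**Where the difficulty lies.** The genuinely delicate point is the passage from semicontinuity of fiber dimension "on the source" (which is the form in which Chevalley's theorem is cleanest) to an \emph{open} condition "on the base" $K^m$. The honest way to handle this: stratify $X=\AA^m$ by generic flatness — there is a dense open $U_0\subseteq X$ over which $g$ is flat, hence (for $S$ equidimensional over each component, or by working component by component) fiber dimension is locally constant on $U_0$; the complement is a proper closed subset, to which one applies Noetherian induction, at each stage again splitting off a flat open locus. This shows the fiber-dimension function on $K^m$ is constructible and, being constructible and upper semicontinuous, the sublevel sets $\{\lambda : \dim S/\fkp_\lambda S < e\}$ are open. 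Because this is a well-known package, in the write-up I would simply cite the relevant results (generic flatness, \cite[Theorem 14.8]{Eisenbud} or \cite[EGA IV 6.9, 9.9.1, 13.1.3]{}) after carrying out the reduction to finite type over $K$ in the previous paragraph, which is the one step specific to our graded, possibly non-finitely-generated, setting. The rest of the argument — that $S/\fkp_\lambda S$ is exactly the scheme-theoretic fiber $g^{-1}(\lambda)$ — is immediate from $S/\fkp_\lambda S = S\otimes_A A/\fkp_\lambda = S\otimes_A \kappa(\lambda)$, using that $K$ is algebraically closed so $\kappa(\lambda)=K$ for the closed point $\lambda=(\lambda_1,\dots,\lambda_m)$.
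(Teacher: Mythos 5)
Your argument has a genuine gap at exactly the point you flag as delicate: the descent from semicontinuity on the source to openness on the base. You treat $\Spec S\to \AA^m_K$ as an arbitrary finite-type morphism and try to extract openness of $\{\lambda \mid \dim S/\fkp_\lambda S<e\}$ from Chevalley's theorem plus generic flatness and Noetherian induction. That package only yields \emph{constructibility} of the fiber-dimension function on the base; upper semicontinuity on the base is simply false for general finite-type morphisms (take $S=K[x,z]/(xz-1)$ over $K[z]$: the fiber over $\lambda\neq 0$ is a point and the fiber over $0$ is empty, so $\{\lambda \mid \dim S/\fkp_\lambda S<0\}=\{0\}$ is not open). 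So the sentence ``combined with upper semicontinuity along the source and a generic flatness argument, $\{\lambda:\dim S/\fkp_\lambda S<e\}$ is open'' asserts precisely what has to be proved, and no step of your outline supplies it. What rescues the statement is the hypothesis $I\subseteq\bigoplus_{n>0}T_n$, which you never use: it forces $S_0=K[z_1,\dots,z_m]$, so every fiber $S\otimes_{S_0}\kappa(\fkp)$ is a graded $\kappa(\fkp)$-algebra whose dimension is attained at its irrelevant maximal ideal, and the vertex section $\Spec S_0\to\Spec S$ (from $S\to S/S_{+}\cong S_0$) pulls Chevalley's semicontinuity on the source down to the base. This cone-type (properness-like) behaviour is exactly the content of Eisenbud, Theorem 14.8(b), which the paper applies with $R'=S_0=K[z_1,\dots,z_m]$ to get an ideal $J_e$ with $\{\fkp\in\Spec R'\mid \dim \mathrm{Q}(R'/\fkp)\otimes_{R'}S\ge e\}=V(J_e)$, and then uses that $K$ is algebraically closed so that every $K$-point is a maximal ideal $\fkp_\lambda$, making $\{\lambda\mid J_e\subseteq\fkp_\lambda\}$ closed in $K^m$.

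A secondary remark: your preliminary reduction to finite type is both unnecessary and not rigorous as written. A graded Noetherian ring $R=\bigoplus_{n\ge 0}R_n$ with $R_0=K$ a field is automatically a finitely generated $K$-algebra (generators of the ideal $R_{+}$ generate $R$ over $R_0$), so $T$ and $S$ are of finite type over $K[z_1,\dots,z_m]$ from the outset; and the Hilbert-function matching you invoke to compare $\dim S/\fkp_\lambda S$ with $\dim S'/\fkp_\lambda S'$ is left vague. Dropping that reduction and instead exploiting the grading of $S$ over $S_0$ (as in the paper) is the repair your proof needs.
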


\begin{proof}
Set $R'=S_0=T_0=K[z_1, \dots, z_m]$.
By semicontinuity of fiber dimension (see for example \cite[Theorem 14.8, b]{Ei}), for any integer $e$, there exists an ideal $J_e$ of $R'$ such that
\[
\{ \fkp\in \Spec R' \mid \dim \mathrm{Q}(R'/\fkp)\otimes_{R'} S  \ge e\}=V(J_e).
\]
For $\lambda\in K^m$, since $\mathrm{Q}(R'/\fkp_{\lambda})\otimes_{R'} S\cong R'/\fkp_{\lambda}\otimes_{R'}S \cong S/\fkp_\lambda S$, it follows that $\dim S/\fkp_\lambda S\ge e$ if and only if $J_e\subseteq \fkp_\lambda$. Hence, since $K$ is algebraically closed,
\[
\{\lambda\in K^m \mid \dim S/\fkp_\lambda S<e\}
\]
is a Zariski open subset of $K^m$.
\end{proof}



Combining Theorem \ref{a4.2} and Lemma \ref{b4.3} yields the following result.

\begin{Theorem}\label{b4.5}
Let $R=\bigoplus_{n\geq 0} R_n$ be a  standard graded Cohen-Macaulay normal domain  of dimension $d\ge 2$ such that $R_0=K$ is an algebraically closed field.
Let $M=\bigoplus_{n\in \mathbb{Z}} M_n$ be a finitely generated reflexive graded $R$-module of rank  $r>1$.

Let $k\geq t_0(M)$,  $F=R(-k)^{r-1}$,  $G=R(-k)^{\alpha}\xrightarrow{\pi} M_{\ge k}$ a graded  surjective map, and $\iota: M_{\ge k}\to M$ the inclusion map. Fix free bases $f_1,\dots, f_{r-1}$ and $g_1, \dots, g_{\alpha}$ of $F$ and $G$, respectively.
For $\lambda=(\lambda_{ij})\in K^{\alpha \times (r-1)}$, let $\varphi_{\lambda}$ be the graded $R$-module homomorphism $F\to G$ such that $\varphi_{\lambda}(f_j)=\sum_{i=1}^{\alpha}\lambda_{ij}g_i$ for $1\le j\le r-1$.

With these assumptions and notation,  the set
{\small
\[
\{ \lambda\in K^{\alpha \times (r-1)} \mid 0\to F\xrightarrow{\iota\circ\pi\circ \varphi_{\lambda}}M\to \Coker(\iota\circ\pi\circ \varphi_{\lambda}) \to 0 \text{ is a Bourbaki sequence of $M$}\}
\]
}
is a nonempty Zariski open subset of $K^{\alpha \times (r-1)}$.
\end{Theorem}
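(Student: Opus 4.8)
The plan is to combine Theorem~\ref{a4.2} (applied with $s=r-1$) with the openness statement of Lemma~\ref{b4.3}, and then to verify nonemptiness by invoking the existence result Theorem~\ref{basic2.1}. First I would reduce the Bourbaki condition to a rank-and-torsionfreeness condition: for a map $\psi_\lambda:=\iota\circ\pi\circ\varphi_\lambda\colon F\to M$, the sequence $0\to F\xrightarrow{\psi_\lambda} M\to\Coker(\psi_\lambda)\to 0$ is a Bourbaki sequence of $M$ precisely when $\psi_\lambda$ is injective and $\Coker(\psi_\lambda)$ is torsionfree of rank one; since $\operatorname{rank} F=r-1$ and $\operatorname{rank} M=r$, the cokernel automatically has rank one once $\psi_\lambda$ is injective, and a torsionfree rank-one graded module over the normal domain $R$ is isomorphic to a shifted graded ideal $I(m)$ (this is exactly the argument already used in the proof of Theorem~\ref{basic2.1}). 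So the target set is
\[
U:=\{\lambda\in K^{\alpha\times(r-1)}\mid \psi_\lambda \text{ is injective and }\Coker(\psi_\lambda)\text{ is torsionfree}\}.
\]

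Next I would apply Theorem~\ref{a4.2}. Since $R$ is a Cohen--Macaulay normal domain of dimension $d\ge 2$ and $M$ is reflexive, choose once and for all an exact sequence $0\to M\xrightarrow{j} F_0\to X\to 0$ with $F_0$ free and $X$ torsionfree (Lemma~\ref{a3.5}(b)). By Theorem~\ref{a4.2} (with $s=r-1$), $\psi_\lambda$ is injective with torsionfree cokernel if and only if $\height\bigl(I_{r-1}(j\circ\psi_\lambda)\bigr)\ge 2$. Now $j\circ\psi_\lambda$ is a map between fixed free modules whose matrix depends $K$-linearly (in fact polynomially) on the entries $\lambda_{ij}$, because $\psi_\lambda$ is obtained by composing the $\lambda$-dependent $\varphi_\lambda$ with the fixed maps $\iota,\pi,j$; concretely, the matrix of $j\circ\pi$ in the chosen bases is a fixed matrix $B$ over $R$, and the matrix of $j\circ\psi_\lambda$ is $B\cdot\Lambda$ where $\Lambda=(\lambda_{ij})$. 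Hence $I_{r-1}(j\circ\psi_\lambda)$ is generated by polynomials in the $\lambda_{ij}$ with coefficients in $R$.

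To turn "$\height\ge 2$" into an open condition on $\lambda$ I would set up the ambient polynomial ring $T=R[z_{ij}:1\le i\le\alpha,\,1\le j\le r-1]$ with $\deg z_{ij}=0$, form the ideal $\mathcal{I}=I_{r-1}(B\cdot Z)\subseteq T$ where $Z=(z_{ij})$ is the generic matrix, and put $S=T/\mathcal{I}$. For a point $\lambda\in K^{\alpha\times(r-1)}$ the specialization $S/\fkp_\lambda S$ equals $R/I_{r-1}(j\circ\psi_\lambda)=R/\Ic_\lambda$, so $\height(\Ic_\lambda)\ge 2$ is equivalent to $\dim(R/\Ic_\lambda)\le d-2$ (here I use that $R$ is Cohen--Macaulay, hence equidimensional and catenary, so height equals codimension). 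One small bookkeeping point: $\mathcal I$ need not be contained in $\bigoplus_{n>0}T_n$ as stated in Lemma~\ref{b4.3}, but that hypothesis is used there only to have the zero-degree part behave well; here the relevant semicontinuity of fiber dimension over $R':=K[z_{ij}]$ applies verbatim to the $R'$-algebra $S$, since $S$ is a finitely generated $R'$-algebra — so I would either cite \cite[Theorem~14.8]{Ei} directly or state a mild variant of Lemma~\ref{b4.3} without that containment hypothesis. Either way, $\{\lambda: \dim S/\fkp_\lambda S<d-1\}=\{\lambda:\height(\Ic_\lambda)\ge 2\}=U$ is Zariski open in $K^{\alpha\times(r-1)}$.

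Finally, nonemptiness: by Theorem~\ref{basic2.1} (or \cite[Corollary~2.4]{Ku2}), since $R$ is a standard graded normal domain over the infinite field $K$ with $\dim R\ge 2$ and $M$ is torsionfree of rank $r>1$, for the chosen $k\ge t_0(M)$ there exists a graded Bourbaki sequence $0\to R(-k)^{r-1}\to M\to I(m)\to 0$. Because $M_{\ge k}$ is generated in degree $k$ and $\pi\colon R(-k)^\alpha\to M_{\ge k}$ is surjective, the inclusion $R(-k)^{r-1}\hookrightarrow M$ factors through $\pi$ up to a choice of basis, i.e.\ it equals $\iota\circ\pi\circ\varphi_{\lambda_0}$ for some $\lambda_0\in K^{\alpha\times(r-1)}$; for this $\lambda_0$ the sequence is a Bourbaki sequence, so $\lambda_0\in U$ and $U\ne\emptyset$. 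I expect the main obstacle to be purely expository rather than mathematical: making precise the factorization of an arbitrary degree-$k$ injection through $\pi$ (so that every Bourbaki sequence beginning in degree $k$ really is of the form $\varphi_\lambda$), and reconciling the hypotheses of Lemma~\ref{b4.3} — in particular handling the fact that $K$ is assumed algebraically closed there, which is consistent with the hypothesis here, but checking that the grading/degree-zero conventions line up with the generic-matrix ring $T$ takes a little care.
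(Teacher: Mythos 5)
Your proposal follows the same skeleton as the paper's proof (reduce via Theorem~\ref{a4.2} to the condition $\height I_{r-1}\ge 2$, make that condition open in $\lambda$ by a semicontinuity argument over $R'=K[z_{ij}]$ as in Lemma~\ref{b4.3}, and get nonemptiness from Theorem~\ref{basic2.1} by lifting the degree-$k$ inclusion through $\pi$ — that last step is fine and is exactly how the paper concludes). The genuine gap is your treatment of the hypothesis $I\subseteq\bigoplus_{n>0}T_n$ in Lemma~\ref{b4.3} as ``bookkeeping.'' Upper semicontinuity of fiber dimension holds on the \emph{source} of a finite-type morphism, not on the target: for an arbitrary finitely generated $R'$-algebra $S$ the set $\{\lambda \mid \dim S/\fkp_\lambda S\ge e\}$ need not be closed (take $R'=K[z]$, $S=K[z,x]/(zx-1)$: the fiber is nonempty exactly on the open set $z\neq 0$). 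So the claim that the semicontinuity ``applies verbatim \ldots since $S$ is a finitely generated $R'$-algebra'' is false, and a ``variant of Lemma~\ref{b4.3} without that containment hypothesis'' and with no replacement hypothesis would be false too. The containment $I\subseteq\bigoplus_{n>0}T_n$ is precisely what yields the retraction $S\to S_0=R'$ (a vertex section), so that each fiber is a cone whose dimension is its local dimension at the vertex, and target semicontinuity follows from source semicontinuity along the section. This is why the paper does not use an arbitrary embedding $0\to M\to F_0$: it constructs $0\to M\to H$ with $H$ free generated in a \emph{single} degree $-\ell$, $\ell\gg 0$ (via $M^{**}\cong((M^*)_{\ge\ell})^*$, using that $R$ is Cohen--Macaulay of dimension $\ge 2$), so that every entry of the matrix $A$ has degree $\ell+k>0$ and $I_{r-1}(A\cdot(z_{ij}))$ automatically lies in $\bigoplus_{n>0}T_n$. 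With your arbitrary $F_0$, rows of degree $k$ can produce $(r-1)$-minors of degree $0$, the containment fails, and your citation does not cover the step.

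The gap is repairable, but only by actually proving the variant you wave at: since the $z_{ij}$ have degree $0$, each specialized ideal $I_{r-1}(B\Lambda)$ is homogeneous in the $R$-grading, so every fiber is a cone (or empty when a degree-zero minor specializes to a nonzero constant); restricting source semicontinuity on $\Spec S$ to the closed subscheme $V(\mm_R S)$, which maps isomorphically onto the locus where the degree-zero part of $I$ vanishes, and noting the fibers are empty off that locus, gives closedness of $\{\lambda\mid\dim S/\fkp_\lambda S\ge e\}$. That argument uses the homogeneity in an essential way, not mere finite generation, and it is more work than the paper's route, which sidesteps the issue entirely by the single-degree embedding. (A minor related point: you should also take the embedding $0\to M\to F_0\to X\to 0$ to be graded — Lemma~\ref{a3.5} is stated ungraded — since you need homogeneous specializations for the cone argument; the paper's construction is graded by design.)
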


\begin{proof}
Our assumptions on $M$ imply that for any $\ell\ge t_0(M^*)$, there exists a graded exact sequence $0\to~M\xrightarrow{\psi} H \to X \to 0$ of $R$-modules  such that $H$ is a free $R$-module generated in single degree $-\ell$ of rank $\beta$ and $X$ is torsionfree. 
 Indeed, by dualizing the short exact sequence  
\[
0 \to (M^*)_{\ge \ell} \to M^* \to M^*/(M^*)_{\ge \ell} \to 0
\]
we get the exact sequence
\[
0 \to \Hom_R (M^*/(M^*)_{\ge \ell}, R) \to M^{**} \to ((M^*)_{\ge \ell})^* \to \Ext_R^1 (M^*/(M^*)_{\ge \ell}, R).
\]
Since $R$ is a Cohen-Macaulay ring of dimension $\ge 2$ with unique graded maximal ideal, we have $\Hom_R (M^*/(M^*)_{\ge \ell}, R)=0$ and $\Ext_R^1 (M^*/(M^*)_{\ge \ell}, R)=0$, thus 
\begin{align}\label{final}
M^{**} \cong ((M^*)_{\ge \ell})^*.  
\end{align}
Let $H_1 \to H_0 \to (M^*)_{\ge \ell} \to 0$ be a minimal graded free presentation of $(M^*)_{\ge \ell}$. Applying $\Hom_R(-,R)$ to it and using \eqref{final} we obtain the exact sequence  $0 \to M \to H_0^* \to H_1^*$. This is the exact sequence that we want since $H_0$ is a free $R$-module generated in single degree $\ell$.
Let $h_1, \dots, h_\beta$ be a basis of $H$.

Let $T=R[z_{ij}]_{1\le i\le \alpha, 1\le j \le r-1}$ be the graded polynomial ring over $R$ with $\deg(z_{ij})=0$, and let $A$ denote the matrix representing $\psi\circ \iota\circ\pi :G=R(-k)^\alpha \to H=R(\ell)^\beta$ with respect to the bases $g_1, \dots,g_\alpha$ and $h_1, \dots, h_\beta$. Note that $I=I_{r-1}(A{\cdot}(z_{ij}))$ is a graded ideal of $T$ generated in degree $(\ell+k)(r-1)$. Hence after choosing $\ell$ large enough, $(\ell+k)(r-1)$ is positive, thus $I\subseteq  \bigoplus_{n> 0} T_n$.

Then, by Lemma \ref{b4.3},
\begin{align*}
\{\lambda\in K^{\alpha \times (r-1)}  \mid \dim T/(I+\fkp_\lambda T)< \dim R-1\}
\end{align*}
is a Zariski open subset of $K^{\alpha \times (r-1)}$. Since $T/(I+\fkp_\lambda T)\iso R/I_{r-1}(\psi\circ \iota\circ\pi\circ \varphi_{\lambda})$, 
by Theorem \ref{a4.2}, it follows that
{\small
\[
\{ \lambda\in K^{\alpha \times (r-1)} \mid 0\to F\xrightarrow{\iota\circ\pi\circ \varphi_{\lambda}}M\to \Coker(\iota\circ\pi\circ \varphi_{\lambda}) \to 0 \text{ is a Bourbaki sequence of $M$}\}
\]
}
is a Zariski open subset of $K^{\alpha \times (r-1)}$ and nonempty by Theorem \ref{basic2.1}.
\end{proof}


\begin{Theorem}\label{a4.3}
Let $R$ be a Noetherian normal domain with $\dim R\geq 2$ and $N$ an $R$-module of rank $s$ with finite free presentation $R^{\beta_1} \xrightarrow{\psi} R^{\beta_0} \to N \to 0$.
\begin{enumerate}[{\rm (a)}]
\item If $N$ is torsionfree, then  $\height(I_{\beta_0-s} (\psi))\geq 2$.
\item Suppose that  there exists an exact sequence
\[
0\to R^{s} \xrightarrow{\varphi} M \to N \to 0,
\]
where $M$ is a torsionfree module of finite projective dimension. If $\height(I_{\beta_0-s} (\psi))\geq 2$, then $N$ is torsionfree.
\end{enumerate}
\end{Theorem}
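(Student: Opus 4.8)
The idea is to reduce both statements to a height computation on the ideal of minors of a presentation matrix, using the standard fact (\cite[Proposition 1.4.10]{BH}) that a finitely presented module over a local ring is free if and only if the appropriate Fitting ideal is the unit ideal. First I would record the generic computation: since $N$ has rank $s$ and $R^{\beta_1}\xrightarrow{\psi}R^{\beta_0}\to N\to 0$ is exact, localizing at the zero ideal gives $N_{(0)}\cong R_{(0)}^{\,s}$, hence $\Im(\psi)_{(0)}$ is free of rank $\beta_0-s$ and $I_{\beta_0-s}(\psi)_{(0)}=R_{(0)}$ while $I_{\beta_0-s+1}(\psi)_{(0)}=0$. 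So $\height(I_{\beta_0-s}(\psi))\ge 1$ always, and the content of (a) is to improve this to height $\ge 2$; the content of (b) is to go the other way.

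For part (a): let $\fkp$ be a height-one prime of $R$. Because $R$ is normal it satisfies $(R_1)$, so $R_\fkp$ is a DVR, in particular regular local, and $N_\fkp$ is torsionfree over $R_\fkp$, hence free (a finitely generated torsionfree module over a PID is free). Since $N_\fkp$ is free of rank $s$, the presentation $R_\fkp^{\beta_1}\xrightarrow{\psi_\fkp}R_\fkp^{\beta_0}\to N_\fkp\to 0$ shows, via \cite[Proposition 1.4.10]{BH} applied to $N_\fkp$ (or directly: a free module of rank $s$ has Fitting ideal $F_s=R_\fkp$), that $I_{\beta_0-s}(\psi)_\fkp = I_{\beta_0-s}(\psi_\fkp)=R_\fkp$. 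Thus $I_{\beta_0-s}(\psi)\not\subseteq\fkp$ for every height-one prime $\fkp$, which together with the generic computation above gives $\height(I_{\beta_0-s}(\psi))\ge 2$.

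For part (b): assume $\height(I_{\beta_0-s}(\psi))\ge 2$ and suppose given the exact sequence $0\to R^s\xrightarrow{\varphi}M\to N\to 0$ with $M$ torsionfree of finite projective dimension. I would reverse the argument of part (a): it suffices to show $N_\fkp$ is free for every height-one prime $\fkp$, since then $\depth_{R_\fkp}N_\fkp>0$ for all primes of height $\ge 1$ (using that $R$ is $(S_2)$ when $\height\fkp\ge 2$, as in the proof of Theorem \ref{a4.2}) and $N$ is torsionfree by \cite[Proposition 1.4.1(a)]{BH}. Fix a height-one prime $\fkp$; then $R_\fkp$ is a DVR, and $I_{\beta_0-s}(\psi)\not\subseteq\fkp$ by hypothesis, so $I_{\beta_0-s}(\psi_\fkp)=R_\fkp$, i.e. the $(\beta_0-s)$-th Fitting ideal of $N_\fkp$ is the unit ideal; by \cite[Proposition 1.4.10]{BH} this forces $N_\fkp$ to have a free summand of rank $\beta_0-s$, so $N_\fkp\cong R_\fkp^{\,\beta_0-s}\oplus N'_\fkp$ with $N'_\fkp$ of rank $s-(\beta_0-s)$... here I need to be careful: what the Fitting condition actually gives is that $N_\fkp$ is generated by $\beta_0-(\beta_0-s)=s$ elements locally, combined with rank $s$ and $R_\fkp$ regular. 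The cleanest route is: over the DVR $R_\fkp$, the module $N_\fkp$ decomposes as free $\oplus$ torsion, say $N_\fkp\cong R_\fkp^{\,s}\oplus \tau$; its $(\beta_0-s)$-th Fitting ideal is then $\prod(\text{elementary divisors of }\tau)\cdot R_\fkp$ up to a unit, and this equals $R_\fkp$ exactly when $\tau=0$. Hence $N_\fkp$ is free, as wanted. The hypotheses that $M$ is torsionfree of finite projective dimension are what make the exact sequence $0\to R^s\to M\to N\to 0$ available and compatible with the Auslander--Buchsbaum bookkeeping; I expect they are used to guarantee $\projdim_{R_\fkp}N_\fkp\le 1$ so that the Fitting-ideal argument applies in the form of the Hilbert--Burch / \cite[Proposition 1.4.10]{BH} machinery, exactly parallel to diagram \eqref{diagram} in the proof of Theorem \ref{a4.2}.

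The main obstacle, and the step I would be most careful about, is the precise translation between "$\height(I_{\beta_0-s}(\psi))\ge 2$" and "$N$ locally free in codimension one": one must check that the relevant Fitting ideal is exactly $I_{\beta_0-s}(\psi)$ (not an off-by-one index), track how the auxiliary sequence $0\to R^s\to M\to N\to 0$ enters — presumably by giving a short projective resolution of $N_\fkp$ so the Buchsbaum--Eisenbud exactness/Fitting criterion is available — and rule out the codimension-one torsion using that $R_\fkp$ is a DVR. Everything above codimension one is handled uniformly by $(S_2)$ of the normal ring $R$, just as in Theorem \ref{a4.2}. I would also double-check the edge case $\beta_0-s\le 0$, where the statement about $I_{\beta_0-s}(\psi)$ should be read with the convention $I_0=R$, making the claim vacuous or trivial as appropriate.
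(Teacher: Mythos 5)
Your part (a) is correct and essentially the paper's argument: localize at a height-one prime, use that $R_\fkp$ is a DVR to get $N_\fkp$ free (the paper gets freeness via $\depth$ and Auslander--Buchsbaum rather than ``torsionfree over a DVR is free'', but this is immaterial), and then translate freeness into $I_{\beta_0-s}(\psi)_\fkp=R_\fkp$. Your identification $I_{\beta_0-s}(\psi)=\mathrm{Fitt}_s(N)$ and the elementary-divisor computation over the DVR are also a valid way to run the codimension-one step of (b).

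In part (b), however, there is a genuine gap exactly where the auxiliary hypotheses on $M$ must enter. You claim that once $N_\fkq$ is free for all height-one primes $\fkq$, one gets $\depth_{R_\fkp}N_\fkp>0$ for \emph{all} primes with $\height(\fkp)\ge 2$ ``using that $R$ is $(S_2)$, as in the proof of Theorem \ref{a4.2}''. But in Theorem \ref{a4.2} that step rests on $\projdim_{R_\fkp}C_\fkp\le 1$, which is available there because $C$ is the cokernel of a map into a \emph{free} module; here $N=\Coker(\varphi\colon R^s\to M)$ with $M$ only torsionfree of finite projective dimension, so $\projdim_{R_\fkp}N_\fkp\le 1$ is simply not available, and your later guess that the hypotheses on $M$ ``guarantee $\projdim_{R_\fkp}N_\fkp\le 1$'' is not what they do. Without some such input the implication fails: freeness in codimension one plus $(S_2)$ of $R$ does not prevent $N$ from having an associated prime of height two (e.g.\ a direct summand of the form $R/\fkq$ with $\height(\fkq)=2$ is compatible with $\height(\mathrm{Fitt}_s(N))\geq 2$), which is precisely why the extra exact sequence is assumed. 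The paper closes this gap as follows: for $\height(\fkp)\ge 2$, either $M_\fkp$ is free, in which case $\projdim_{R_\fkp}N_\fkp\le 1$ and Auslander--Buchsbaum gives $\depth_{R_\fkp}N_\fkp\ge \depth R_\fkp-1\ge 1$; or $M_\fkp$ is not free, in which case the mapping cone of $\varphi_\fkp$ (using $\projdim M<\infty$) gives $\projdim_{R_\fkp}N_\fkp=\projdim_{R_\fkp}M_\fkp$, hence $\depth_{R_\fkp}N_\fkp=\depth_{R_\fkp}M_\fkp\ge 1$ because $M$ is torsionfree. (Alternatively one could invoke the depth inequality $\depth N_\fkp\ge\min\{\depth M_\fkp,\ \depth R_\fkp-1\}$ for the sequence $0\to R^s_\fkp\to M_\fkp\to N_\fkp\to 0$, but some argument of this kind must be supplied; as written, your reduction to height-one primes is unjustified.)
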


\begin{proof}
(a)
Let $\frk{p}\in \Spec R$ with $\height(\frk{p})= 1$.
 By \cite[Proposition 1.4.1(a)]{BH} and Lemma~\ref{a3.5},
the torsionfreeness of $N$ implies that $\depth_{R_\frk{p}} N_\frk{p}>0$.
Since the ring $R$ is normal,  it follows that $R_\fkp$ is a regular local ring of dimension $1$.
By the Auslander-Buchsbaum theorem, $ \projdim_{R_\fkp} N_\fkp = \depth R_\fkp- \depth N_\fkp$, hence $N_\fkp$ is a free $R_\fkp$-module. It follows from \cite[Proposition 1.4.9]{BH} that  $I_{\beta_0-s} (\psi)_\fkp=R_{\frk{p}}$. We conclude that  $\height(I_{\beta_0-s} (\psi))\geq 2$.

(b)
Let $\fkp \in \Spec(R)$ with $\height (\fkp) \geq 2$. Since $R$ satisfies the condition $(S_2)$ we have that $\depth R_\fkp \geq 2$.
If $M_\fkp$ is a free $R_\fkp$-module, then $\projdim_{R_\fkp} N_\fkp \leq 1$ and hence $\depth_{R_\fkp} N_\fkp=\depth R_\fkp-\projdim_{R_\fkp} N_\fkp \geq 1$.

If $M_\fkp$ is not free, then considering the mapping cone of $\varphi_\fkp$ we obtain that
$$\projdim_{R_\fkp} N_\fkp = \projdim_{R_\fkp} M_\fkp <\infty.$$
 Using the Auslander-Buchsbaum formula again, we get that $\depth_{R_\fkp} M_\fkp=\depth_{R_\fkp} N_\fkp$. On the other hand, since $M$ is torsionfree, one has $\depth_{R_\fkp} M_\fkp \geq 1$. Therefore, we have the following chain of equivalences:
\begin{align*}
\text{$N$ is torsionfree} &\Leftrightarrow \text{$\depth_{R_\frk{q}} N_\frk{q}>0$ for all $\frk{q}\in \Spec R$ with $\height (\frk{q})\ge 1$}\\
&\Leftrightarrow \text{$\depth_{R_\frk{q}} N_\frk{q}>0$ for all $\frk{q}\in \Spec R$ with $\height (\frk{q})= 1$} \\
&\Leftrightarrow \text{$N_\frk{q}$ is $R_\frk{q}$-free for all $\frk{q}\in \Spec R$ with $\height (\frk{q})= 1$} \\
&\Leftrightarrow \text{$I_{\beta_0-s}(\psi)_\frk{q}=R_\frk{q}$ for all $\frk{q}\in \Spec R$ with $\height (\frk{q})= 1$} \\
&\Leftrightarrow \text{$\height (I_{\beta_0-s} (\psi))\ge 2$,}
\end{align*}
where   the second equivalence follows from the above argument. For the third equivalence we use that when $\height(\fkq)=1$ the ring $R_\fkq$ is regular, so $\projdim_{R_\fkq} N_\fkq =1-\depth_{R_\fkq} N_\fkq \in \{0,1\}$.
\end{proof}

The next corollary is an immediate consequence of the previous theorem.

\begin{Corollary}
\label{nice}
Let $R$ be a Noetherian normal domain with $\dim R \geq 2$ and  $M$ a finitely generated torsionfree $R$-module of rank $r>1$ with $\projdim M <\infty$. Let $\varphi:R^{r-1}\to M$ be an injective $R$-module homomorphism, and $R^{\beta_1} \xrightarrow{\psi} R^{\beta_0} \to \Coker (\varphi) \to 0$  a presentation for $\Coker (\varphi)$.
Then the module $\Coker (\varphi)$ is torsionfree if and only if $\height(I_{\beta_0-r+1}(\psi)) \geq 2$.
\end{Corollary}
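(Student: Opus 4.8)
The plan is to obtain the statement directly from Theorem~\ref{a4.3} applied to the module $N=\Coker(\varphi)$. Since $\varphi$ is injective, the sequence
\[
0\to R^{r-1}\xrightarrow{\varphi} M\to N\to 0
\]
is exact, and by hypothesis $M$ is torsionfree with $\projdim_R M<\infty$. Together with the prescribed finite free presentation $R^{\beta_1}\xrightarrow{\psi} R^{\beta_0}\to N\to 0$, this provides exactly the data required by Theorem~\ref{a4.3}, the role of the integer $s$ there being played by $r-1$, so that $I_{\beta_0-s}(\psi)=I_{\beta_0-r+1}(\psi)$.

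For the implication ``$N$ torsionfree $\Rightarrow\ \height(I_{\beta_0-r+1}(\psi))\ge 2$'' I would quote Theorem~\ref{a4.3}(a), which needs nothing beyond $R$ being a Noetherian normal domain with $\dim R\ge 2$ and the given presentation of $N$. For the converse, ``$\height(I_{\beta_0-r+1}(\psi))\ge 2\ \Rightarrow\ N$ torsionfree'', I would quote Theorem~\ref{a4.3}(b); its additional hypothesis---the existence of an exact sequence $0\to R^{r-1}\to M\to N\to 0$ with $M$ torsionfree of finite projective dimension---is supplied verbatim by the displayed sequence, using the injectivity of $\varphi$ and the assumption $\projdim_R M<\infty$. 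Combining the two implications gives the claimed equivalence.

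I do not anticipate a genuine obstacle: all the substance is already contained in Theorem~\ref{a4.3}, and the proof is essentially bookkeeping---recognising $\Coker(\varphi)$ as the module $N$ of the theorem, checking that injectivity of $\varphi$ furnishes the short exact sequence required in part~(b), and matching the index $\beta_0-s$ of the theorem with $\beta_0-r+1$ under the identification $s=r-1$.
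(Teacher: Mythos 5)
You take the same route as the paper, which records Corollary~\ref{nice} as an immediate consequence of Theorem~\ref{a4.3}; but the identification you call bookkeeping, ``the role of $s$ is played by $r-1$'', is precisely the step that does not go through. Theorem~\ref{a4.3} assumes that $N$ has rank $s$, whereas $N=\Coker(\varphi)$ has rank $\rank M-(r-1)=1$, so taking $s=r-1$ is legitimate only when $r=2$. The ``only if'' direction can be repaired: applying Theorem~\ref{a4.3}(a) with $s=\rank N=1$ gives $\height I_{\beta_0-1}(\psi)\ge 2$, and since $I_{\beta_0-1}(\psi)\subseteq I_{\beta_0-r+1}(\psi)$ the stated inequality follows. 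The ``if'' direction is where the rank hypothesis is genuinely used: in the proof of Theorem~\ref{a4.3}(b) the step ``$I_{\beta_0-s}(\psi)_{\frk{q}}=R_{\frk{q}}$ at a height one prime $\frk{q}$ implies $N_{\frk{q}}$ is free'' works because a unit $(\beta_0-s)$-minor bounds the number of generators of $N_{\frk{q}}$ by $s$, and freeness follows only when $s$ equals the rank of $N$. With $s=r-1>1$ this implication, and hence your deduction, fails.

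In fact, for $r>2$ the equivalence with the index $\beta_0-r+1$ is not provable at all: take $R=K[x,y]$, $M=R^3$ and $\varphi\colon R^2\to R^3$ with matrix whose columns are $(x,y,0)^{\mathrm{T}}$ and $(y,x,0)^{\mathrm{T}}$. Then $\varphi$ is injective and $\Coker(\varphi)\cong \Coker\left(\begin{smallmatrix} x& y\\ y& x\end{smallmatrix}\right)\oplus R$ has nonzero torsion (the first summand is nonzero and killed by $x^2-y^2$), yet with $\beta_0=3$ and $r=3$ one gets $I_{\beta_0-r+1}(\psi)=I_1(\psi)=(x,y)$, of height $2$. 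The criterion actually delivered by Theorem~\ref{a4.3} with $s=\rank N=1$ is $\height I_{\beta_0-1}(\psi)\ge 2$ (here $I_2(\psi)=(x^2-y^2)$ has height $1$ and correctly detects the torsion), and this agrees with the printed $I_{\beta_0-r+1}(\psi)$ only for $r=2$. So the gap is not yours alone --- the paper also treats the corollary as immediate, and the mismatch traces back to the index in the statement versus the rank hypothesis of Theorem~\ref{a4.3} --- but your assertion that there is ``no genuine obstacle'' overlooks exactly the hypothesis $\rank N=s$ whose failure makes the asserted equivalence break down for $r>2$.
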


The next theorem tells us how to compute a Bourbaki ideal, once we have its relation  matrix.

\begin{Theorem}\label{a4.4}
Let $R$ be a Noetherian factorial domain and $I$ an ideal of $R$ of grade $\geq 2$.
Suppose $R^{\beta} \xrightarrow{\psi} R^{\alpha} \xrightarrow{\varepsilon} I \to 0$ is a finite free presentation  of $I$,
and let $B$ be a matrix  representing $\psi$. Let $C$ be  any $\alpha\times (\alpha-1)$ submatrix of $B$ of maximal rank $\alpha-1$. Then there exists  a unique element $x\in R$ such that  $I=(1/x)I_{\alpha-1}(C)$.
\end{Theorem}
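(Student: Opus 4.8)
Set $N=\Coker(\psi^{*})$, so that applying $\Hom_R(-,R)$ to the presentation $R^{\beta}\xrightarrow{\psi}R^{\alpha}\xrightarrow{\varepsilon}I\to 0$ gives a complex $0\to I^{*}\to R^{\alpha}\xrightarrow{\psi^{*}}R^{\beta}$. Since $\grade(I)\geq 2$ we have $\Ext^1_R(I,R)\cong\Ext^2_R(R/I,R)$, but the more useful fact is that, since $\grade(R/I)\ge 2$ and $I$ is an ideal, $I^{*}\cong R$ (an ideal of grade $\ge 1$ in a domain has rank $1$, and a rank-one reflexive ideal in a factorial domain is principal, hence free; here $I\cong I^{**}=I^{*}$ after identifying $I^{*}$ inside $\mathrm{Q}(R)$). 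The point I actually want is the Hilbert–Burch type statement: because $\grade(I)\ge 2$, the ideal $I_{\alpha-1}(\psi)$ is, up to a principal factor, equal to $I$. More precisely, the first step is to reduce to the Hilbert–Burch situation. After reordering generators we may assume the first $\alpha-1$ rows of $B$ are the chosen submatrix $C$; but since $C$ has rank $\alpha-1$ and $B$ has rank $\alpha-1$ (as $\Coker(B)=I$ has rank $1$), we can in fact replace the presentation $R^\beta\xrightarrow{\psi}R^\alpha$ by $R^{\alpha-1}\xrightarrow{\psi'} R^\alpha$ whose matrix is exactly $C$, still with cokernel having the same rank-one torsionfree-up-to-torsion behaviour: the extra columns of $B$ lie in the $R$-span of the columns of $C$ over $\mathrm{Q}(R)$, and modding out by them changes the cokernel only by torsion, which does not affect $I_{\alpha-1}$ of the matrix after clearing denominators. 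I would state this carefully: $I_{\alpha-1}(B)$ and $I_{\alpha-1}(C)$ generate the same divisorial ideal, and in fact $I_{\alpha-1}(C)\subseteq I_{\alpha-1}(B)$ with equality up to $\height\ge 2$; since both define ideals whose associated divisor is what matters, and $R$ is factorial, $I_{\alpha-1}(C)=x\cdot I_{\alpha-1}(B)$ for a unique $x\in R$ with $\gcd(I_{\alpha-1}(B))=1$. Replacing $B$ by $C$ I may assume $\beta=\alpha-1$.

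**Main step: Hilbert–Burch.** With $\beta=\alpha-1$ and $\grade(I)\ge 2$, the complex $0\to R^{\alpha-1}\xrightarrow{C} R^{\alpha}\xrightarrow{\varepsilon} I\to 0$ is exact (the map $C$ is injective since its cokernel is torsionfree of rank one and $C$ has maximal rank), so by the Hilbert–Burch theorem \cite[Theorem 1.4.17]{BH} there is a unique $a\in R$ with $a$ a nonzerodivisor — here, since $R$ is a domain, $a\ne 0$ — such that $\varepsilon$ is given, up to sign, by the signed maximal minors of $C$, and $I=a\cdot I_{\alpha-1}(C)$. Wait — I must be careful about which direction the principal factor goes. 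Hilbert–Burch says $I_{\alpha-1}(C)=a\cdot I$ when the resolution is $0\to R^{\alpha-1}\to R^\alpha\to I\to 0$ with $I$ having grade $2$ and $I$ itself containing no nonunit common factor; in general it says the ideal generated by the signed maximal minors equals $I$ up to multiplication by a nonzerodivisor. Concretely: let $\delta_i$ be the $i$-th signed maximal minor of $C$ and $J=(\delta_1,\dots,\delta_\alpha)=I_{\alpha-1}(C)$. Then $\varepsilon(e_i)=a\delta_i$ for some fixed $a\in\mathrm{Q}(R)$, and since $\varepsilon(e_i)$ generate $I\subseteq R$ and $\delta_i\in R$, and $R$ is factorial, writing $a=p/q$ in lowest terms forces $q\mid\gcd(\delta_i)$; after absorbing, one gets $I=(1/x)\cdot J$ for a unique $x\in R$ (namely $x$ is the element such that $xI=J$, i.e. $x=q/(\text{unit})$ after the dust settles). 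So the cleanest route: show $J:=I_{\alpha-1}(C)$ and $I$ have the same divisor class and $J\subseteq$ (divisorial closure issues are trivial since $\grade(I)\ge2$ makes $I$ its own unmixed part), conclude $J=xI$ as fractional ideals for a unique $x\in\mathrm{Q}(R)$, and then observe $x\in R$ because $J\subseteq R$ forces... no: $J=xI$ with $J,I\subseteq R$ does not force $x\in R$. Rather $J = x I$ with $1\in$-issues. Let me reorganize: Hilbert–Burch gives directly $I = a\cdot I_{\alpha-1}(C)$ with $a$ a nonzerodivisor of $\mathrm{Q}(R)$, i.e. $a\in\mathrm{Q}(R)^{\times}$; since $I\subseteq R$ and $I_{\alpha-1}(C)\subseteq R$, and $R$ factorial, we may write $a=u\cdot\prod p_i^{e_i}$ with $p_i$ primes and $e_i\in\mathbb Z$; the negative-exponent primes must divide every minor of $C$ (else $I\not\subseteq R$), so dividing them out redefines things — in the end $x=a^{-1}$ cleared: set $x$ to be the element of $R$ with $xI=I_{\alpha-1}(C)$... this holds because $I_{\alpha-1}(C)=aI$ and if $a\notin R$ then the primes in the denominator of $a$ divide $\gcd(I_{\alpha-1}(C))$, allowing us to cancel and get $I_{\alpha-1}(C)=a'I$ with $a'\in R$; set $x=a'$ after checking $a'$ is forced. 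Uniqueness is then immediate since $R$ is a domain: $xI=x'I$ with $I\ne 0$ gives $x=x'$.

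**Where the difficulty lies.** The genuinely delicate point is \emph{reducing to the square case} $\beta=\alpha-1$, i.e. justifying that passing from $B$ to the submatrix $C$ of maximal rank does not change the relevant ideal up to a principal factor. The statement only assumes $\rk C=\alpha-1$, not that $C$'s columns generate the full syzygy module, so $\Coker(C)$ need not be torsionfree — it is $I$ modulo a torsion submodule coming from the discarded columns. I would handle this by localizing: at every height-one prime $\fkp$ of $R$ (where $R_{\fkp}$ is a DVR), $I_{\fkp}$ is principal, $\psi_{\fkp}$ and hence $C_{\fkp}$ have a very controlled Smith normal form, and one reads off $\mathrm{ord}_{\fkp}(I_{\alpha-1}(C)) - \mathrm{ord}_{\fkp}(I_{\alpha-1}(B)) = \mathrm{ord}_{\fkp}(I) - (\text{something depending only on }I)$ — more simply, that $I_{\alpha-1}(C)$ and $I$ have the same image in the divisor group $\mathrm{Div}(R)=\bigoplus_{\height\fkp=1}\mathbb Z$ up to a fixed shift independent of the choice of $C$. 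Because $R$ is factorial, agreement of divisors up to a principal divisor means the ideals differ by a principal factor, and factoriality gives the uniqueness of $x$. So the heart of the proof is a local computation at height-one primes plus the dictionary between divisorial ideals and $\mathrm{Cl}(R)=0$; the Hilbert–Burch theorem then finishes the square case cleanly, and everything else is bookkeeping with $\gcd$'s in the factorial domain $R$.
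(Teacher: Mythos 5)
Your reduction to the square case is exactly where the argument breaks, and it breaks in two ways. First, the claim that $B$ may be replaced by $C$ because $I_{\alpha-1}(C)=x\cdot I_{\alpha-1}(B)$ (or because the two ideals have the same divisorial part) is false: take $I=(x_1,x_2,x_3)\subset K[x_1,x_2,x_3]$ with $B$ the $3\times 3$ Koszul syzygy matrix and $C$ its first two columns; then $I_2(C)=x_1\,(x_1,x_2,x_3)$ while $I_2(B)=(x_1,x_2,x_3)^2$, so the two ideals neither differ by a principal factor nor have the same divisorial hull. Second, even granting the passage to $C$, the complex $0\to R^{\alpha-1}\xrightarrow{C}R^{\alpha}\xrightarrow{\varepsilon}I\to 0$ is in general not exact at $R^{\alpha}$ --- the columns of $C$ only span a rank $\alpha-1$ submodule of the syzygy module, as you yourself observe --- so the Hilbert--Burch theorem does not apply; and if it did (i.e.\ if the sequence were exact), then together with $\grade(I)\ge 2$ it would force $x$ to be a unit, which is not what happens in the situations the theorem is designed for (in the paper $x$ is a nontrivial monomial). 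The proposed repair by localizing at height one primes cannot close the gap: since $\grade(I)\ge 2$, the ideal $I$ lies in no height one prime, so $I$ and $(1/\gcd)\,I_{\alpha-1}(C)$ both have trivial divisor, and agreement of divisors only identifies reflexive hulls; it can never certify the equality of the ideals themselves, which is the whole content of the statement.

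The missing idea is to dualize rather than truncate the presentation. Applying $\Hom_R(-,R)$ gives an exact sequence $0\to R\xrightarrow{\gamma}R^{\alpha}\xrightarrow{\psi^{*}}R^{\beta}$ (here $I^{*}\cong R$, a fact you noted but did not use), where $\gamma$ is represented by the column $u=(u_1,\dots,u_\alpha)^{\mathrm{T}}$ whose entries generate $I$. Then $B^{\mathrm{T}}u=0$, hence $C^{\mathrm{T}}u=0$ because $C^{\mathrm{T}}$ consists of $\alpha-1$ rows of $B^{\mathrm{T}}$. Since $\rank(C^{\mathrm{T}})=\alpha-1$, the kernel of $C^{\mathrm{T}}$ has rank one and also contains the vector $f$ of signed maximal minors of $C^{\mathrm{T}}$; therefore $u=cf$ for a unique nonzero $c\in \mathrm{Q}(R)$, which yields the entrywise proportionality $I=c\cdot I_{\alpha-1}(C)$ --- a statement strictly stronger than any codimension-one comparison. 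A gcd argument in the factorial ring (if $g=\gcd(f_1,\dots,f_\alpha)$, then $(1/g)I_{\alpha-1}(C)$ has grade $\ge 2$, and two grade $\ge 2$ ideals that are proportional differ by a unit) shows $x=1/c\in R$, and uniqueness is immediate since $R$ is a domain. Your Hilbert--Burch instinct is right in spirit, but the factor $x$ records precisely the failure of exactness at $R^{\alpha}$, and only the duality/rank-one-kernel argument (or an equivalent) captures it.
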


\begin{proof}
Taking the $R$-dual of  $R^{\beta} \xrightarrow{\psi} R^{\alpha} \xrightarrow{\varepsilon} I \to 0$ yields the exact sequence
\[
0\rightarrow R\xrightarrow{\gamma} R^\alpha  \xrightarrow{\psi^*} R^\beta.
\]

With respect to the canonical bases, $B^{\mathrm{T}}$ represents $\psi^*$, and let the vector $u=(u_1,\dots, u_\alpha)^{\mathrm{T}}$ represent the map $\gamma$.
Then ${B}^{\mathrm{T}}u=0$,  and  the elements $u_1,\ldots,u_\alpha$ generate the ideal $I$.
Since  ${C}^{\mathrm{T}}$ is a submatrix of size $(\alpha-1)\times \alpha$ of  ${B}^{\mathrm{T}}$, which is of size $\beta\times \alpha$, it follows that also ${C}^{\mathrm{T}}u=0$.
 
Let $\Delta_i({C}^{\mathrm{T}})$ be the determinant of the matrix which is obtained by deleting the  $i$th column of ${C}^{\mathrm{T}}$, and set $f_i=(-1)^{i+1}\Delta_i({C}^{\mathrm{T}})$ for $i=1,\dots, \alpha$. Then
\[
{C}^{\mathrm{T}}{\cdot}\left(\begin{smallmatrix}
f_1\\
f_2\\
\vdots \\
f_{\alpha}
\end{smallmatrix}
\right)=0.
\]
Since $\rank (\Ker ({C}^{\mathrm{T}}))= \alpha-\rank ({C}^{\mathrm{T}})=1$, it follows that
\[
I=c \cdot (f_1,f_2,\ldots,f_{\alpha})
\]
for a unique  nonzero element $c\in \mathrm{Q}(R)$. As $R$ is a factorial ring,  the greatest common divisor of $f_1,\ldots,f_{\alpha}$ exists.  Let $g=\gcd\{f_1,\ldots,f_{\alpha}\}$. Then $I=cgJ$ where $J=(1/g)(f_1,f_2,\ldots,f_{\alpha})$ is an ideal in $R$  of grade  $\geq 2$.  Since both ideals $I$ and $J$  have grade  $\geq 2$, it follows that $cg$ is a unit element in $R$. The desired conclusion follows with $x=1/c$.
\end{proof}


\section{Bourbaki ideals of Koszul cycles}\label{section4}

In this section we study  Bourbaki ideals of Koszul cycles, and compute them explicitly in some cases.
Let   $S=K[x_1, x_2, \dots, x_n]$ be a polynomial ring of dimension $n\ge 2$ over a field $K$, which is not necessarily infinite.
 Let $\mm=(x_1, x_2, \dots, x_n)$  be the maximal  graded  ideal of $S$ and
\[
0\to K_n \xrightarrow{\partial_n} K_{n-1} \xrightarrow{\partial_{n-1}} \dots \xrightarrow{\partial_2} K_1 \xrightarrow{\partial_1} K_0 \xrightarrow{\partial_0} S/\frk{m} \to 0
\]
be the Koszul complex on the sequence $x_1, x_2, \dots, x_n$, which is a linear free graded resolution  of $S/\mathfrak{m}\cong K$.

We set $Z_i=\Im \partial_i$ for $1\le i \le n$, which are torsionfree graded $S$-modules generated in degree $i$.
Clearly, $Z_1\cong \frk{m}$ and $Z_{n}\cong R(-n)$. Hence we focus on the Bourbaki ideals of $Z_i$ where $2\le i \le n-1$.

For $2\le i \le n-1$, we denote $r_i=\rank Z_i$.
By Theorem~\ref{basic2.1} and Lemma~\ref{b2.2}, there exist a graded ideal $I$  and a graded exact sequence
\begin{eqnarray}
\label{Bourbaki sequence of Koszul}
0\to S^{r_i-1}(-i) \to Z_i \to I(m_i) \to 0
\end{eqnarray}
such that $\height(I)= 2$, where the integer  $m_i$ is computed in Theorem~\ref{a2.1}.

Furthermore,  Proposition \ref{b2.3} implies that $S/I$ is Cohen-Macaulay on the punctured spectrum of $S$.

Let $\eb_1,\dots, \eb_n$ be a basis of  the free $S$-module $K_1$.  For any subset $\{i_1, i_2, \dots, i_k\}$ of $\{1, 2, \dots, n\}$,
we denote the wedge product $\mathbf{e}_{i_1}\wedge \mathbf{e}_{i_2} \wedge \cdots \wedge \mathbf{e}_{i_k}$ by $\mathbf{e}_{i_1 i_2 \dots i_k}$.
For any $k\geq 1$, the elements $\eb_{i_1 i_2 \dots  i_k}$ with $1\leq i_1<\dots <i_k\leq n$ form a basis for $K_k$.
For convenience, we denote by $\widehat{\mathbf{e}}_{i_1 i_2 \dots i_k}$ the unique canonical basis element in $K_{n-k}$ such that $\mathbf{e}_{i_1 i_2 \dots i_k} \widehat{\mathbf{e}}_{i_1 i_2 \dots i_k}=\mathbf{e}_{12\dots n}$.

Once we assign to the variable $x_i$ the multidegree $$\mdeg(x_i)=\mathbf{e}'_i= (0,\dots,0,1,0,\dots, 0) \in \ZZ^n,$$ for $i=1,\dots, n$, and we let $\mdeg(\mathbf{e}_{i_1,\dots, i_j})=  \mathbf{e}'_{i_1}+\dots+\mathbf{e}'_{i_j}$ when $1\leq i_1<\dots <i_j\leq n$, it is clear that the differentials in the Koszul complex described above respect this multigrading.


\begin{Lemma}\label{a5.1}
Let $2\le i\le n-1$. For a graded Bourbaki sequence {\rm (\ref{Bourbaki sequence of Koszul})}, we have
\begin{enumerate}[{\rm (a)}]
\item $r_i=\rank (Z_i)=\binom{n-1}{i-1}$ and
\item 
$m_i= i \binom{n-1}{i-1}-n\binom{n-2}{i-2}-i$.
\end{enumerate}

Hence, $I$ is generated in degree 
$i \binom{n-1}{i-1}-n\binom{n-2}{i-2}.$
\end{Lemma}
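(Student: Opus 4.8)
The plan is to realize the truncated Koszul complex as an explicit graded free resolution of $Z_i$, and then to feed it into Theorem~\ref{a2.1} and Proposition~\ref{a3.3} of Section~\ref{section3}.

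Since $x_1,\dots,x_n$ is a regular sequence, the Koszul complex $K_\bullet$ is acyclic in positive homological degrees, so for each $2\le i\le n-1$ the truncation
\[
0\to K_n\xrightarrow{\partial_n}K_{n-1}\to\cdots\to K_{i+1}\xrightarrow{\partial_{i+1}}K_i\xrightarrow{\partial_i}Z_i\to 0
\]
is a graded free resolution of $Z_i$, where $K_j=S(-j)^{\binom{n}{j}}$ for $i\le j\le n$, the basis elements of $K_j$ being in degree $j$. For part~(a), taking alternating sums of ranks along this resolution gives $\rank Z_i=\sum_{j=i}^{n}(-1)^{j-i}\binom{n}{j}$; equivalently, one argues by downward induction using the short exact sequences $0\to Z_{j+1}\to K_j\to Z_j\to 0$ starting from $\rank Z_n=1$. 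In either case Pascal's rule $\binom{n}{j}=\binom{n-1}{j-1}+\binom{n-1}{j}$ telescopes the alternating sum to $\binom{n-1}{i-1}$, which is~(a).

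For part~(b), first note that $\dim Z_i=\dim S=n$ because $Z_i$ is a torsionfree module of positive rank over the domain $S$; moreover $t_0(Z_i)=i$ as $Z_i$ is generated in degree $i$, and $\grade(I)=\height(I)=2$ since $S$ is Cohen--Macaulay. Hence Theorem~\ref{a2.1} applies with $k=i$ and yields $m_i=i(r_i-1)-\mathrm{e}_1(Z_i)$. To evaluate $\mathrm{e}_1(Z_i)$, feed the resolution above into Proposition~\ref{a3.3}: the only nonzero Betti numbers are $b_{l,\,i+l}=\binom{n}{i+l}$ for $0\le l\le n-i$, so
\[
\mathrm{e}_1(Z_i)=\sum_{j=i}^{n}(-1)^{j-i}\,j\binom{n}{j}=n\sum_{j=i}^{n}(-1)^{j-i}\binom{n-1}{j-1}=n\binom{n-2}{i-2},
\]
using $j\binom{n}{j}=n\binom{n-1}{j-1}$ and the same telescoping identity shifted by one. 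Substituting into $m_i=i(r_i-1)-\mathrm{e}_1(Z_i)$ gives $m_i=i\binom{n-1}{i-1}-n\binom{n-2}{i-2}-i$. For the last assertion, $I(m_i)$ is a nonzero graded epimorphic image of $Z_i$, hence generated in degree $\le i$; since $(Z_i)_{<i}=0$ we have $(I(m_i))_{<i}=0$ (and in fact comparing degree-$i$ components in (\ref{Bourbaki sequence of Koszul}) gives $\dim_K(I(m_i))_i=\binom{n}{i}-(r_i-1)=\binom{n-1}{i}+1>0$), so $I(m_i)$ is generated in degree exactly $i$ and therefore $I$ is generated in degree $i+m_i=i\binom{n-1}{i-1}-n\binom{n-2}{i-2}$.

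The computation is almost entirely bookkeeping; the one nontrivial ingredient is the identity $\sum_{j=i}^{n}(-1)^{j-i}\binom{n}{j}=\binom{n-1}{i-1}$ (and its shift), which follows immediately by induction from Pascal's rule, and the one conceptual point is simply recognizing that the truncated Koszul complex resolves $Z_i$, so that the machinery of Section~\ref{section3} may be applied verbatim.
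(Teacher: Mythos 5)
Your proof is correct and follows essentially the same route as the paper: plug the truncated Koszul resolution of $Z_i$ into Proposition~\ref{a3.3} to get $\mathrm{e}_1(Z_i)=\sum_{j=i}^n(-1)^{j-i}j\binom{n}{j}=n\binom{n-2}{i-2}$, apply Theorem~\ref{a2.1} with $k=i$, and read off the generating degree of $I$ from the fact that $Z_i$ is generated in degree $i$. The only (immaterial) difference is that for the rank in (a) you use the alternating sum over $K_n,\dots,K_i$, whereas the paper sums over $K_{i-1},\dots,K_0$ from the exact tail of the Koszul complex; both telescope to $\binom{n-1}{i-1}$.
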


\begin{proof}
(a) Considering the free resolution of $Z_i$ which results from the Koszul resolution of $K$ we obtain that $\rank (Z_i)=\sum_{k=0}^{i-1}(-1)^k\binom{n}{i-1-k}$. The desired result follows then by induction on $i$.

(b) By Theorem \ref{a2.1} and Proposition \ref{a3.3}, we get
\[
m_i=i\left\{\binom{n-1}{i-1}-1\right\} - \mathrm{e}_1 (Z_i)\quad \text{and} \quad \mathrm{e}_1 (Z_i)=\sum_{j=i}^n(-1)^{j-i}j\binom{n}{j}.
\]
Since $j\binom{n}{j}=n\binom{n-1}{j-1}$, $\mathrm{e}_1 (Z_i)=n\sum_{j=i}^n(-1)^{j-i}\binom{n-1}{j-1}=n\binom{n-2}{i-2}$. It follows that $m_i= i \binom{n-1}{i-1}-n\binom{n-2}{i-2}-i$.
\end{proof}

\subsection{Bourbaki ideals of $Z_{n-1}$ and $Z_{n-2}$}

In this subsection we investigate the Bourbaki ideals of $Z_{n-1}$ and $Z_{n-2}$. In this case, we show that we can choose a Bourbaki sequence
\[
0 \to F \xrightarrow{\partial_i |_F} Z_{i} \to Z_{i}/\partial_i(F) \to 0
\]
so that $F$ is a submodule of $K_{i}$ generated by a part of the canonical basis of $K_{i}$. It will follow that the Bourbaki ideal $I\iso Z_{i}/\partial_i(F)$ is a monomial ideal.

We first treat the case $i=n-1$. By Lemma~\ref{a5.1}, $r_{n-1}=\rank(Z_{n-1})= n-2$.  Note that $\projdim Z_{n-1}=1$, hence we may apply Proposition \ref{a3.4}.

\begin{Proposition}\label{a5.2}
For all $1\le i<j\le n$,  $(x_i, x_j)$ is a Bourbaki ideal  of $Z_{n-1}$.
\end{Proposition}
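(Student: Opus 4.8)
The plan is to exhibit an explicit graded Bourbaki sequence of $Z_{n-1}$ whose Bourbaki ideal is $(x_i,x_j)$, taking for the free submodule the span of a suitable part of the canonical basis of $K_{n-1}$. Recall that $K_{n-1}$ is freely generated in degree $n-1$ by $\widehat{\mathbf{e}}_1,\dots,\widehat{\mathbf{e}}_n$, and that a short sign computation from the definition of $\widehat{\mathbf{e}}_k$ gives $\partial_n(\mathbf{e}_{12\cdots n})=\sum_{k=1}^{n}x_k\widehat{\mathbf{e}}_k$. Fix $1\le i<j\le n$ and set
\[
F=\bigoplus_{k\in\{1,\dots,n\}\setminus\{i,j\}} S\,\widehat{\mathbf{e}}_k\ \subseteq\ K_{n-1},
\]
a graded free submodule of rank $n-2=r_{n-1}-1$ (by Lemma~\ref{a5.1}), generated in degree $n-1$. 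I claim that $0\to F\xrightarrow{\partial_{n-1}|_F}Z_{n-1}\to Z_{n-1}/\partial_{n-1}(F)\to 0$ is the Bourbaki sequence we want.

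First, $\partial_{n-1}|_F$ is injective: since $\Ker\partial_{n-1}=\Im\partial_n=S\cdot\sum_k x_k\widehat{\mathbf{e}}_k$ and $S$ is a domain, any nonzero $S$-multiple of $\sum_k x_k\widehat{\mathbf{e}}_k$ has nonzero $\widehat{\mathbf{e}}_i$-coordinate and hence does not lie in $F$; thus $F\cap\Ker\partial_{n-1}=0$. Next, since $\partial_{n-1}$ maps $K_{n-1}$ onto $Z_{n-1}$ with kernel $\Im\partial_n$, we obtain
\[
Z_{n-1}/\partial_{n-1}(F)\ \cong\ K_{n-1}/(F+\Im\partial_n)\ \cong\ (S\widehat{\mathbf{e}}_i\oplus S\widehat{\mathbf{e}}_j)\,/\,S(x_i\widehat{\mathbf{e}}_i+x_j\widehat{\mathbf{e}}_j),
\]
because $\sum_k x_k\widehat{\mathbf{e}}_k\equiv x_i\widehat{\mathbf{e}}_i+x_j\widehat{\mathbf{e}}_j$ modulo $F$. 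The right-hand side is the cokernel of the column map $S(-n)\to S(-(n-1))^2$ with entries $x_i,x_j$, and the assignment $(a,b)\mapsto x_ib-x_ja$ identifies it, up to a degree shift, with the monomial ideal $(x_i,x_j)$.

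Combining the two points, $0\to F\to Z_{n-1}\to (x_i,x_j)(m)\to 0$ is exact with $F$ graded free, so it is a graded Bourbaki sequence of $Z_{n-1}$ and $(x_i,x_j)$ is a Bourbaki ideal of $Z_{n-1}$; by Lemma~\ref{a5.1} the shift is $m=m_{n-1}=2-n$, and since $\grade(x_i,x_j)=2$ the sequence does not split, in accordance with Lemma~\ref{b2.2}. As an alternative to the cokernel computation one may use $\projdim_S Z_{n-1}=1$ and apply Proposition~\ref{a3.4} with $\psi_1=\partial_n$ and $\psi_2\colon F\hookrightarrow K_{n-1}$: the Bourbaki ideal is then $I_{n-1}(\psi_1,\psi_2)$, the ideal of maximal minors of the $n\times(n-1)$ matrix whose columns are $(x_1,\dots,x_n)^{\mathrm{T}}$ together with the standard basis vectors $\mathbf{e}_k$ for $k\notin\{i,j\}$, and a one-line minor expansion yields $(x_i,x_j)$. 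I do not expect a genuine obstacle here: the only delicate points are the Koszul sign bookkeeping behind $\partial_n(\mathbf{e}_{12\cdots n})=\sum_k x_k\widehat{\mathbf{e}}_k$ and the standard identification of $S^2/S(x_i,x_j)^{\mathrm{T}}$ with $(x_i,x_j)$.
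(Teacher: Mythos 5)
Your proof is correct and follows essentially the same route as the paper: the same free submodule $F$ spanned by the $\widehat{\mathbf{e}}_k$ with $k\notin\{i,j\}$, and the same identification $Z_{n-1}/\partial_{n-1}(F)\cong K_{n-1}/(F+\Im\partial_n)\cong (S\widehat{\mathbf{e}}_i\oplus S\widehat{\mathbf{e}}_j)/\langle x_i\widehat{\mathbf{e}}_i\pm x_j\widehat{\mathbf{e}}_j\rangle\cong (x_i,x_j)(-n+2)$. The only cosmetic difference is that you get injectivity from $\Ker\partial_{n-1}=\Im\partial_n=S\cdot\sum_k x_k\widehat{\mathbf{e}}_k$ meeting $F$ trivially, whereas the paper reads off the coefficient of $\widehat{\mathbf{e}}_{ik}$ in $K_{n-2}$; both are fine.
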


\begin{proof}
Let $F$ be the submodule of $K_{n-1}$ generated  by the elements $\widehat{\mathbf{e}}_{k}$ with  $k\in \{1, 2, \dots, n\}\setminus \{i, j\}$.
We denote $\varphi: F\to Z_{n-1}$ the restriction of  $\partial_{n-1}$ to $F$.

We claim that $\varphi(F)\iso  S(-n+1)^{n-2}$.
Indeed, suppose that
\[
\sum_{k\in \{1, 2, \dots, n\}\setminus \{i, j\}} f_k{\cdot}\partial(\widehat{\mathbf{e}}_{k})=0,
\]
where $f_k\in S$. On the left hand side of the above equation, the coefficient of $\widehat{\mathbf{e}}_{ik}$ is $f_k x_i$ up to sign, hence $f_k=0$ for all $k$. Then from the chain of isomorphisms
\[
\Coker(\varphi)\cong K_{n-1}/(\Im(\partial_{n}) + F)\cong S\widehat{\mathbf{e}}_{i} \oplus S\widehat{\mathbf{e}}_{j}/\left< x_{i}\widehat{\mathbf{e}}_{i} \pm x_{j}\widehat{\mathbf{e}}_{j} \right>\cong (x_i, x_j)(-n+2)
\]
we derive the graded Bourbaki sequence
\[
0\to F \xrightarrow{\varphi} Z_{n-1} \to (x_i, x_j)(-n+2) \to 0,
\]
as desired.
\end{proof}


We now present a Bourbaki sequence of $Z_{n-2}$. By Lemma~\ref{a5.1}, we have  that $r_{n-2}=\rank(Z_{n-2})= \binom{n-1}{2}$.

\begin{Proposition}\label{c5.3}
Let $n\ge 3$ and $\xb=\prod_{i=1}^nx_i$. Then the ideal
$$
I=\left(  \frac{\xb}{x_ix_j}\ \middle| \ (i, j)\in \{(1, 2), (2, 3), \dots, (n-1, n), (n, 1)\} \right)
$$
is a Bourbaki ideal of $Z_{n-2}$.
\end{Proposition}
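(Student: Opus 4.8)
The plan is to exhibit an explicit graded Bourbaki sequence of $Z_{n-2}$ whose ideal is $I$, following the pattern of Proposition~\ref{a5.2}. Throughout I read the indices of the variables and of the canonical basis vectors modulo $n$, so $x_0=x_n$, $\eb_{n+1}=\eb_1$, etc. Label the edges of the $n$-cycle by $e_k=\{k,k+1\}$, $k\in\ZZ/n\ZZ$ (thus $e_n=\{n,1\}$), and set $\mu_k=\xb/(x_kx_{k+1})=\prod_{l\ne k,k+1}x_l$, so that $I=(\mu_1,\dots,\mu_n)$ is generated in degree $n-2$, in accordance with Lemma~\ref{a5.1}. Let $F\subseteq K_{n-2}$ be the free submodule generated by those canonical basis elements $\widehat{\eb}_{ij}$ for which $\{i,j\}$ is \emph{not} an edge of the cycle. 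There are $\binom{n}{2}-n=\binom{n-1}{2}-1=r_{n-2}-1$ of them, each of degree $n-2$, so $F\cong S(-(n-2))^{r_{n-2}-1}$, while $K_{n-2}/F$ is free with basis the images $y_{e_k}$ ($k\in\ZZ/n\ZZ$) of the $n$ remaining basis elements. The Bourbaki map will be $\varphi:=\partial_{n-2}|_F\colon F\to Z_{n-2}$.

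The first step is to transfer the problem into the free module $S^n=\bigoplus_k Sy_{e_k}$. By exactness of the Koszul complex, $Z_{n-2}=\Im\partial_{n-2}\cong K_{n-2}/\Im\partial_{n-1}$, and under this isomorphism $\varphi$ becomes the composite $F\hookrightarrow K_{n-2}\xrightarrow{\pi}K_{n-2}/\Im\partial_{n-1}$, where $\pi$ also denotes the projection $K_{n-2}\to K_{n-2}/F$. Hence $\varphi$ is injective iff $F\cap\Im\partial_{n-1}=0$, and $\Coker\varphi\cong(K_{n-2}/F)/\pi(\Im\partial_{n-1})$. Now $\Im\partial_{n-1}$ is generated by $v_k:=\partial_{n-1}(\widehat{\eb}_k)=\sum_{l\ne k}\epsilon_{kl}\,x_l\,\widehat{\eb}_{\{k,l\}}$ with $\epsilon_{kl}\in\{\pm1\}$, and $\epsilon_{kl}=-\epsilon_{lk}$ because $\partial_{n-1}\partial_n(\eb_{12\cdots n})=\partial_{n-1}\bigl(\sum_k x_k\widehat{\eb}_k\bigr)=0$. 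Applying $\pi$ discards all terms $\widehat{\eb}_{\{k,l\}}$ with $\{k,l\}$ not an edge, i.e. $l\notin\{k-1,k+1\}$, leaving $\pi(v_k)=\epsilon_{k,k-1}x_{k-1}y_{e_{k-1}}+\epsilon_{k,k+1}x_{k+1}y_{e_k}$; after replacing each $y_{e_k}$ by $\epsilon_{k,k+1}y_{e_k}$ and using $\epsilon_{k,k-1}=-\epsilon_{k-1,k}$, this becomes $w_k:=x_{k+1}y_{e_k}-x_{k-1}y_{e_{k-1}}$. Thus $\pi(\Im\partial_{n-1})=N:=\langle w_1,\dots,w_n\rangle$ and $\Coker\varphi\cong S^n/N$; it remains to prove $S^n/N\cong I$ and $F\cap\Im\partial_{n-1}=0$.

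The heart of the argument is the isomorphism $S^n/N\cong I$. Define $\psi\colon S^n\to S$ by $y_{e_k}\mapsto\mu_k$; since $\psi(w_k)=x_{k+1}\mu_k-x_{k-1}\mu_{k-1}=\xb/x_k-\xb/x_k=0$, $\psi$ factors through a surjection $\bar\psi\colon S^n/N\twoheadrightarrow I$, and one must show $\Ker\psi=N$, i.e. that $N$ is the full syzygy module of the monomials $\mu_1,\dots,\mu_n$. As these are monomials, their syzygies are generated by the pairwise syzygies $s_{jk}=\tfrac{\lcm(\mu_j,\mu_k)}{\mu_j}y_{e_j}-\tfrac{\lcm(\mu_j,\mu_k)}{\mu_k}y_{e_k}$ (a standard fact, e.g. via the Taylor complex of a monomial ideal). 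If $e_j$ and $e_k$ share a vertex, a direct computation gives $s_{jk}=\pm w_l\in N$ for a suitable $l$; and if $e_j\cap e_k=\emptyset$, then $s_{jk}=x_jx_{j+1}y_{e_j}-x_kx_{k+1}y_{e_k}$. Writing $t_j:=x_jx_{j+1}y_{e_j}$, the identity $x_{j+1}w_{j+1}=x_{j+1}x_{j+2}y_{e_{j+1}}-x_jx_{j+1}y_{e_j}=t_{j+1}-t_j$ shows $t_j\equiv t_{j+1}\pmod N$ for all $j$, so $t_1,\dots,t_n$ are pairwise congruent mod $N$ and $s_{jk}=t_j-t_k\in N$. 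Since $C_n$ is a simple cycle these two cases (together with $j=k$) exhaust all pairs, so $\Ker\psi=N$, $\bar\psi$ is an isomorphism, and $\Coker\varphi\cong I$.

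Finally, $S^n/N\cong I$ has rank $1$, so $N$ has rank $n-1$; on the other hand $\rank\Im\partial_{n-1}=\rank K_{n-1}-\rank\Im\partial_n=n-1$ because $\partial_n$ is injective. The surjection $\pi|_{\Im\partial_{n-1}}\colon\Im\partial_{n-1}\twoheadrightarrow N$ is thus a map of torsionfree $S$-modules of equal rank $n-1$, so its kernel, being torsionfree of rank $0$, is zero; hence $F\cap\Im\partial_{n-1}=0$ and $\varphi$ is injective. Putting everything together,
\[
0\longrightarrow F\xrightarrow{\ \varphi\ }Z_{n-2}\longrightarrow I(m_{n-2})\longrightarrow 0
\]
is a graded Bourbaki sequence of $Z_{n-2}$ (with the shift $m_{n-2}$ as computed in Lemma~\ref{a5.1}), so $I$ is a Bourbaki ideal of $Z_{n-2}$. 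The main obstacle is the third step — proving that the cyclic relations $w_k$ generate \emph{all} syzygies of the $\mu_k$; the only genuine difficulty there is the nonadjacent pairs $e_j,e_k$, which are disposed of by the telescoping congruence $t_j\equiv t_{j+1}$. A secondary nuisance is bookkeeping the signs $\epsilon_{kl}$ in the second step, resolved via their antisymmetry and a change of sign of the basis.
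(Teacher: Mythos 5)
Your proof is correct, and while it starts from the same data as the paper's proof (the same free submodule $F\subseteq K_{n-2}$ spanned by the non-edge basis vectors $\widehat{\eb}_{ij}$, the same cyclic relations $w_k$ presenting $\Coker(\varphi)\cong K_{n-2}/(\Im\partial_{n-1}+F)$, and the same assignment $\widehat{\eb}_{ij}\mapsto \xb/x_ix_j$), the key verification is genuinely different. The paper first proves injectivity of $\varphi$ and torsionfreeness of the cokernel by invoking Theorem~\ref{a4.2}: it shows $\height(I_{r-1}(\iota\circ\varphi))\geq 2$ for $\iota\colon Z_{n-2}\to K_{n-3}$, using that this ideal of minors is monomial and exhibiting, via a block-triangular submatrix, an explicit maximal minor $x_1^{\binom{n-1}{2}}x_2\cdots x_{n-2}$ not divisible by $x_n$ (plus symmetry); only then does it identify $\Coker(\varphi)$ with $I$ by a surjection whose kernel is torsionfree of rank zero -- so the inclusion $\Ker\psi\subseteq \langle w_1,\dots,w_n\rangle$ is never needed there. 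You instead prove that inclusion directly: using the standard fact that the pairwise syzygies $s_{jk}$ generate the syzygy module of a set of monomials (Taylor complex), you check that the adjacent-edge syzygies are the $w_k$ up to sign and dispose of the disjoint-edge syzygies by the telescoping identity $x_{j+1}w_{j+1}=t_{j+1}-t_j$, obtaining $S^n/N\cong I$ outright; injectivity of $\varphi$ then follows from a rank count ($\rank N=n-1=\rank\Im\partial_{n-1}$, so the kernel of $\pi|_{\Im\partial_{n-1}}$, being torsionfree of rank zero, vanishes), with the signs handled correctly through the antisymmetry $\epsilon_{kl}=-\epsilon_{lk}$ coming from $\partial_{n-1}\partial_n=0$. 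What each approach buys: the paper's determinantal criterion fits its general machinery (and is reused in Theorem~\ref{a5.4} and Proposition~\ref{n6z3}), whereas your argument is self-contained, bypasses Theorem~\ref{a4.2} and the explicit determinant computation entirely, and yields as a byproduct the exact first syzygies of the Bourbaki ideal $I$.
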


\begin{proof}
We consider the set of ordered pairs  $A=\{(1, 2), (2, 3), \dots, (n-1, n), (1, n)\}$.
Let $F$ be the submodule of $K_{n-2}$ generated  by the elements $\widehat{\mathbf{e}}_{ij}$ with  $1\le i<j\le n$ and $(i, j)\not\in A$. Let $r=r_{n-2}$. Note that $\rank(F)=r-1$.

Let $\iota: Z_{n-2}\to K_{n-3}$ be the inclusion map and $\varphi: F\to Z_{n-2}$ the restriction of  $\partial_{n-2}$ to $F$.
Then,  by Theorem \ref{a4.2},
$\varphi$ is injective and $\Coker(\varphi)$ is a torsionfree module if and only if the $\height (I_{r-1} (\iota\circ \varphi)) \geq 2$.

By our choice of the basis of $F$,  the map $\iota \circ \varphi\:  F\to K_{n-3}$ is a multigraded $S$-module homomorphism.
So, if $D$ is the  matrix representing $\iota \circ \varphi$ with respect to the canonical bases, then $I_{r-1} (D)$ is  a monomial ideal.
To conclude that $\height (I_{r-1}(D))$ is at least two,  it suffices to show that no $x_i$ divides all the monomial generators of $I_{r-1} (D)$. By symmetry, it is enough to show this for $i=n$. In other words,  we have  to show that there exists an $(r-1)\times (r-1)$ submatrix of $D$ such that its determinant is not divisible  by $x_n$.

Let $F_1$ and $F_2$ be the  submodules of $F$ generated by the elements
\begin{center}
$\widehat{\mathbf{e}}_{ij}$ with  $(i,j)\in \{(1,3), (1,4), \dots, (1, n-1)\}$,  and \\
$\widehat{\mathbf{e}}_{ij}$ with  $2\le i<j\le n$ and $|j-i|\ne1$, respectively.
\end{center}
Then $F=F_1\oplus F_2$.
Let $G_1$ and $G_2$ be the  free submodules of $K_{n-3}$ generated by the elements
\begin{center}
$\widehat{\mathbf{e}}_{1ij}$ with  $(i,j)\in \{(2,3), (3,4), \dots, (n-2, n-1)\}$,  and \\
$\widehat{\mathbf{e}}_{1ij}$ with  $2\le i<j\le n$ and $|j-i|\ne1$, respectively.
\end{center}
Note that  $\rank (G_1\oplus G_2)=(n-3) + \binom{n-2}{2}=r-1$. With respect to the above specified bases, the matrix representing the composition $F=F_1\oplus F_2 \to K_{n-3}\to G_1\oplus G_2$   has the form
$\left(\begin{smallmatrix}
A_{11} & 0\\
A_{21} & A_{22}
\end{smallmatrix}\right),$ where
\[
A_{11}=\left(\begin{matrix}
x_2 &      &            &           & \\
x_4 & x_3 &            &  \mbox{\LARGE{0}} & \\
     & x_5 & \ddots  &           & \\
     &      & \ddots  & x_{n-3}  & \\
     &  \mbox{\LARGE{0}} &            & x_{n-1} & x_{n-2}
\end{matrix}\right) \text{ and }
A_{22}=\left(\begin{matrix}
x_1 &      &            &           & \\
     & x_1 &            &      \mbox{\LARGE{0}}     & \\
     &      & \ddots  &           & \\
     &      &            & x_{1}  & \\
     &  \mbox{\LARGE{0}}  &            &          & x_{1}
\end{matrix}\right).
\]
Hence the determinant of $\left(\begin{smallmatrix}
A_{11} & 0\\
A_{21} & A_{22}
\end{smallmatrix}\right)$ is $x_1^{\binom{n-1}{2}}x_2x_3\cdots x_{n-2}$, which is not divisible by $x_n$.
Therefore,
the sequence $0\to F \xrightarrow{\varphi} Z_{n-2} \to \Coker(\varphi)\to 0$ is exact and $\Coker(\varphi)$ is a torsionfree module of rank one.

Set $N=\Coker(\varphi)$.
Then, since $N\iso K_{n-2}/(\Im \partial_{n-1} + F)$, for $N$ we have the graded  free presentation
\[
K_{n-1} \xrightarrow{H} \bigoplus_{(i,j)\in A}S\widehat{\mathbf{e}}_{ij} \to N \to 0,
\]
where the matrix
\[
H=\left(\begin{matrix}
x_2 & x_1 &            &           & & \\
   0  & -x_3 & x_2      &           & \mbox{\LARGE{0}}&  \\
  0  &   0   & -x_4  &   \ddots  & & \\
  \vdots   &   \vdots   &     \ddots    & \ddots  & x_{n-2} & \\
  0   &  0    &    \cdots        &   \ddots     & -x_{n} & x_{n-1}\\
-x_n&   0   &     \cdots      &    \cdots  &    0  & -x_{1}
\end{matrix}\right)
\]
gives the map  with respect to the canonical bases of $K_{n-1}$ and $\bigoplus_{(i,j)\in A}S\widehat{\mathbf{e}}_{ij}$.

Recall that  $I$  is the ideal generated by the monomials $\xb/x_ix_j$ with $(i, j)\in A$. We denote  $\psi:~\bigoplus_{(i,j)\in A}S\widehat{\mathbf{e}}_{ij} \to I$ the graded $S$-module homomorphism letting $\psi(\widehat{\mathbf{e}}_{ij})=\xb/x_ix_j$
for all $(i,j)\in A$.
Then, since $\Im (H)\subseteq \Ker(\psi)$, $\psi$ induces the surjection
\[
\overline{\psi}: N=\bigoplus_{(i,j)\in A}S\widehat{\mathbf{e}}_{ij}/\Im H \to I.
\]
Hence $N\iso I$ since the Kernel of $\overline{\psi}$ is torsionfree of rank zero.
Therefore, we have the graded Bourbaki sequence
$0\to F \xrightarrow{\varphi} Z_{n-2} \to I \to 0,$
as desired.
\end{proof}

\subsection{When does $Z_i$ have a multigraded Bourbaki sequence?}

For $i=n-1$ or $i= n-2$, in the previous subsection we found free submodules $F\subset K_i$ such that
$0\to F \xrightarrow{\partial_i|_F}  Z_i\to I\to 0$ is a graded Bourbaki sequence and $F$ is generated by a subset of the canonical basis of $K_i$.
In this situation,  $Z_i$ admits a multigraded Bourbaki sequence with respect to the multigrading enherited from the Koszul complex  described at the beginning of Section \ref{section4}.


However, the next   result shows  that   multigraded Bourbaki sequences only exist for special values of $i$ and $n$.

\begin{Theorem}\label{a5.4}
Let $2\le i \le n-1$ and $F$ a free  submodule of $K_i$  generated by a subset of the canonical basis  of $K_i$
such that  the sequence
\begin{equation}\label{Bourbaki}
0\to F \xrightarrow{\partial_i|_F} Z_i \to Z_i/\partial_i(F) \to 0
\end{equation}
is  a graded Bourbaki sequence of $Z_i$. Then
\begin{equation}
\label{i2v}
i\geq \max\left\{   i\binom{n-1}{i-1}-n\binom{n-1}{i-2}, (n-i)\binom{n-1}{i} - n\binom{n-1}{i+1} \right\}.
\end{equation}
 \end{Theorem}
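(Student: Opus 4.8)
The plan is to extract, from the hypothesis, two restrictions on the set $\mathcal B\subseteq\binom{[n]}{i}$ indexing the basis vectors that span $F$, and then to obtain \eqref{i2v} by a double count. Write $\eb_S$ ($S\in\binom{[n]}{i}$) for the canonical basis of $K_i$, so that $F=\bigoplus_{S\in\mathcal B}S\,\eb_S$ and $K_i=F\oplus(K_i/F)$ with $K_i/F=\bigoplus_{S\notin\mathcal B}S\,\eb_S$. Since \eqref{Bourbaki} is a Bourbaki sequence, $\partial_i|_F$ is injective and $N:=Z_i/\partial_i(F)$ is an ideal of $S$, hence torsionfree of rank one, so $|\mathcal B|=\rank Z_i-1=\binom{n-1}{i-1}-1$ by Lemma~\ref{a5.1}(a).

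For the first inequality I would apply Theorem~\ref{a4.2} to $M=Z_i$ (reflexive, as $i\ge 2$), $\varphi=\partial_i|_F$, and the exact sequence $0\to Z_i\xrightarrow{\iota}K_{i-1}\to Z_{i-1}\to 0$, with $\iota$ the natural inclusion and $Z_{i-1}$ torsionfree. Torsionfreeness of $N$ then yields $\height\bigl(I_{|\mathcal B|}(A)\bigr)\ge2$, where $A$ is the matrix whose columns are $\partial_i(\eb_S)=\sum_{j\in S}\pm x_j\,\eb_{S\setminus j}$ ($S\in\mathcal B$). In particular $I_{|\mathcal B|}(A)\not\subseteq(x_l)$ for every $l$, so $A$ keeps full column rank modulo $x_l$; since the columns indexed by $\mathcal C_l=\{S\in\mathcal B:\,l\in S\}$ are, modulo $x_l$, supported on rows among the $(i-1)$-subsets containing $l$, their linear independence forces $|\mathcal C_l|\le\binom{n-1}{i-2}$. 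Summing over $l$ and using $\sum_l|\mathcal C_l|=\sum_{S\in\mathcal B}|S|=i\,|\mathcal B|$ gives $i\bigl(\binom{n-1}{i-1}-1\bigr)\le n\binom{n-1}{i-2}$, that is $i\ge i\binom{n-1}{i-1}-n\binom{n-1}{i-2}$.

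For the second inequality I would pass to the complementary presentation. Since $F\cap Z_{i+1}=F\cap\ker\partial_i=0$, the composition $\psi\colon K_{i+1}\xrightarrow{\partial_{i+1}}K_i\twoheadrightarrow K_i/F$ has image isomorphic to $Z_{i+1}$ and cokernel $K_i/(F+Z_{i+1})\cong N$, so $K_{i+1}\xrightarrow{\psi}K_i/F\to N\to0$ is a finite free presentation of $N$ with $\beta_0:=\rank(K_i/F)=\binom ni-|\mathcal B|=\binom{n-1}{i}+1$. As $N$ is torsionfree of rank one, Theorem~\ref{a4.3}(a) gives $\height\bigl(I_{\beta_0-1}(\psi)\bigr)\ge2$, hence modulo each $x_l$ the matrix of $\psi$ has rank $\ge\beta_0-1$; since the rank of a matrix is at most its term rank, the support bipartite graph of $\psi$ modulo $x_l$ (rows $\mathcal B^c:=\binom{[n]}{i}\setminus\mathcal B$, columns $\binom{[n]}{i+1}$) has a matching of size $\ge\beta_0-1$. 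By the deficiency form of Hall's theorem every set of rows $\mathcal S$ then has at least $|\mathcal S|-1$ neighbours; applied to $\mathcal S_l=\{S\in\mathcal B^c:\,l\notin S\}$, whose neighbours modulo $x_l$ are $(i+1)$-subsets avoiding $l$ (of which there are $\binom{n-1}{i+1}$), this gives $|\mathcal S_l|\le\binom{n-1}{i+1}+1$. Summing over $l$ and using $\sum_l|\mathcal S_l|=\sum_{S\in\mathcal B^c}(n-i)=(n-i)\beta_0$ gives $(n-i)\bigl(\binom{n-1}{i}+1\bigr)\le n\bigl(\binom{n-1}{i+1}+1\bigr)$, i.e.\ $i\ge(n-i)\binom{n-1}{i}-n\binom{n-1}{i+1}$. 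Together the two estimates are exactly \eqref{i2v}.

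The main obstacle, in each case, is the third step: recognising that the height-$\ge2$ condition localises at the coordinate hyperplanes $x_l=0$ and there confines the supports of the columns of the relevant Koszul (sub)matrix, so that a trivial linear-algebra count (a collection of vectors living in a space of small dimension must be dependent; equivalently, rank is bounded by term rank) produces the combinatorial inequalities — and, crucially for the second bound, realising that one must present $N$ through $K_{i+1}$ rather than through $K_{i-1}$, which is what makes the relevant subsets be the $(i+1)$-sets avoiding $l$ and thus produces the exponent $n-i$. One must also be careful with the off-by-one in the second estimate, where $\beta_0-1$ is one below the number of rows so a matching deficiency of $1$ is permitted; the degenerate ranges (e.g.\ $i=n-1$, where $Z_{i+1}\cong S(-n)$) need no separate argument, since there \eqref{i2v} holds automatically.
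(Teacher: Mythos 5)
Your proposal is correct and matches the paper's proof in all essentials: both bounds are extracted from the same two presentations (the map $F\to K_{i-1}$ handled via Theorem~\ref{a4.2}, and $K_{i+1}\to K_i/F$ handled via Theorem~\ref{a4.3}(a)), the observation that height $\ge 2$ of the relevant ideal of minors forbids containment in the height-one primes $(x_l)$, and a count over $l$ using $\sum_l|\mathcal{C}_l|=i|\mathcal{B}|$, respectively $\sum_l|\mathcal{S}_l|=(n-i)|\mathcal{B}^c|$. The only difference is cosmetic: where the paper uses multigradedness to see that the relevant minors are monomials and compares multidegree components, you bound rank by term rank modulo $x_l$ and invoke the defect form of Hall's theorem, which produces exactly the same per-index inequalities.
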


\begin{proof}
To show the conclusion, we prove that $i$ is at least each of the terms on the right hand side of the inequality \eqref{i2v}.

For $1\leq j \leq n$ we set
$
\Ic_j=\{ (\ell_1,\dots, \ell_j) \mid  1\leq \ell_1<\dots <\ell_j \leq n   \}.
$
Then $\{\eb_\pb \mid \pb \in \Ic_j\}$ is the canonical basis of $K_j$.
We assume that
$$
H=\{\eb_{\pb_1},\dots, \eb_{\pb_{r-1}}\}
$$
is an $S$-basis of $F$, where $r=\rank(Z_i)=\binom{n-1}{i-1}$ and $\pb_1,\dots, \pb_{r-1} \in \Ic_i$.

 Let $\varphi: F\to K_{i-1}$ be the composition of the inclusion $F\to K_i$ and $\partial_i$. Let $A$ be a matrix representing $\varphi$ with respect to the specified bases of $F$ and $K_{i-1}$.

Then, since the basis of $F$ is a part of the canonical basis of $K_i$, $\varphi$ is a multigraded $S$-module homomorphism. This shows that each $(r-1)$-minor of $A$ is a monomial.
Let $\Delta \left[ _{\mathbf{e}_{\qb_1}, \dots, \mathbf{e}_{\qb_{r-1}}}^{\mathbf{e}_{\pb_1}, \dots, \mathbf{e}_{\pb_{r-1}}}\right]$ denote the determinant of the submatrix of $A$ with respect to the columns indexed by $\mathbf{e}_{\pb_1}, \dots, \mathbf{e}_{\pb_{r-1}}$ and the rows indexed by $\mathbf{e}_{\qb_1}, \dots, \mathbf{e}_{\qb_{r-1}}$. Then
\[
I_{r-1}(\varphi)=I_{r-1}(A)=\left(\Delta \left[ _{\mathbf{e}_{\qb_1}, \dots, \mathbf{e}_{\qb_{r-1}}}^{\mathbf{e}_{\pb_1}, \dots, \mathbf{e}_{\pb_{r-1}}}\right] \ \middle| \ \qb_1,\dots, \qb_{r-1} \in \Ic_{i-1}\right),
\]
and the multidegree (actually the exponent) of the monomial $\Delta \left[ _{\mathbf{e}_{\qb_1}, \dots, \mathbf{e}_{\qb_{r-1}}}^{\mathbf{e}_{\pb_1}, \dots, \mathbf{e}_{\pb_{r-1}}}\right]$  is $\sum_{j=1}^{r-1} \mdeg (\mathbf{e}_{\pb_j}) - \sum_{j=1}^{r-1} \mdeg (\mathbf{e}_{\qb_j})$.

Assume
$
\mdeg (  \sum_{j=1}^{r-1} \eb_{\pb_j} )=(a_1,\dots, a_n) \in \ZZ^n.
$
 Since ${\pb_j} \in \Ic_i$ for $j=1,\dots, r-1$, it follows that
$$
a_1+\dots+ a_n= i(r-1).
$$

Since (\ref{Bourbaki}) is a Bourbaki sequence of $Z_i$, Theorem \ref{a4.2} implies that $\height I_{r-1}(\varphi)\ge 2$. As
 $I_{r-1}(A)$ is a monomial ideal,
\[
\begin{split}
&\height I_{r-1}(A)\ge 2\\
\Leftrightarrow  &\gcd \left\{ \Delta \left[ _{\mathbf{e}_{\qb_1}, \dots, \mathbf{e}_{\qb_{r-1}}}^{\mathbf{e}_{\pb_1}, \dots, \mathbf{e}_{\pb_{r-1}}}\right] \ \middle| \  \qb_1,\dots, \qb_{r-1} \in \Ic_{i-1} \right\} =1\\
\Leftrightarrow & \text{ for all $1\le k \le n$, there exist $\qb_1^{(k)},\dots, \qb_{r-1}^{(k)} \in \Ic_{i-1}$ such that
$\Delta \left[ _{\mathbf{e}_{\qb_1^{(k)}}, \dots, \mathbf{e}_{\qb_{r-1}^{(k)}}}^{\mathbf{e}_{\pb_1}, \dots, \mathbf{e}_{\pb_{r-1}}}\right]$ is not} \\
&\text{divisible by $x_k$.} \\
\end{split}
\]

Therefore, for any $1\le k \le n$,  $a_k$ equals the $k$th component of $ \sum_{j=1}^{r-1} \mdeg(\eb_{\qb_j^{(k)}}  )$, whence $a_k \leq  |\{ \qb \in \Ic_{i-1} \mid k\in \qb \}| = \binom{n-1}{i-2}$. Adding these relations for all $k$ we obtain $\sum_{i=1}^n a_k \leq n \binom{n-1}{i-2}$, from which we get that $i\geq   i\binom{n-1}{i-1}-n\binom{n-1}{i-2} $.

To verify the second inequality subsumed  by \eqref{i2v} we let $\psi: K_{i+1} \to K_{i}/F$ be the composition  of $\partial_{i+1}$ and the canonical map $K_i \to K_i/F$.  Then $\psi$ is a multigraded $S$-module homomorphism. 

Set $s=\rank(K_i/F)-1= \binom{n}{i}-\left\{\binom{n-1}{i-1}-1\right\} -1=\binom{n-1}{i}$. We identify $K_i/F$ with the  free $S$-module with the basis $\eb_{\qb_1},\dots, \eb_{\qb_{s+1}}$, where
\[
\Ic_{i}\setminus H=\{\qb_1, \dots, \qb_{s+1}\}.
\]

If we let $\sum_{j=1}^{s+1} \mdeg(\eb_{\qb_j})=(c_1,\dots, c_n)$, then $\sum_{j=1}^n c_j=(s+1)i$.

Since \eqref{Bourbaki} is a Bourbaki sequence,  the module $\Coker \psi$ is torsionfree. Now, using the free presentation
$K_{i+1}  \xrightarrow{\psi} K_i/F \to \Coker\psi \to 0$ and $\Coker \psi \cong  Z_i/\partial_i(F)$, Theorem~\ref{a4.3} (a) yields $\height I_{s}(\psi)\ge 2$.

Arguing as in the first part of this proof, for all $k=1,\dots, n$ we find $\pb_1^{(k)},\dots, \pb_s^{(k)} \in \Ic_{i+1}$ and
$\qb_1^{(k)},\dots, \qb_s^{(k)}\in \Ic_i\setminus H$ such that  the vectors $\sum_{i=1}^s \mdeg(\pb_j^{(k)})$ and \\
$\sum_{i=1}^s \mdeg(\qb_j^{(k)})$  have the same $k$th component.

On one hand, the $k$th component of $\sum_{i=1}^s \mdeg(\pb_j^{(k)})$ is at least $$s-\left|\{ \pb \in \Ic_{i+1}\mid k\notin \pb\}\right|=s-\binom{n-1}{i+1}.$$ On the other hand, the $k$th component of $\sum_{i=1}^s \mdeg(\qb_j^{(k)})$ is at most $c_k$.
Thus $\sum_{j=1}^n c_j \geq  n\left(s-\binom{n-1}{i+1}\right)$, from where we infer that $i\geq (n-i)\binom{n-1}{i}-n\binom{n-1}{i+1}$.
\end{proof}

\begin{Corollary}
{\rm (a)} Let $i\ge 2$. Then there is no multigraded Bourbaki sequence of $Z_i$ for $n\gg0$.

{\rm (b)} Let $j\ge 3$. Then there is no multigraded Bourbaki sequence of $Z_{n-j}$ for $n\gg0$.
\end{Corollary}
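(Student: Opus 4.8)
The plan is to deduce both parts directly from Theorem~\ref{a5.4}. By that theorem, if $Z_i$ (resp.\ $Z_{n-j}$) admits a multigraded Bourbaki sequence — necessarily of the form \eqref{Bourbaki} with $F$ a free submodule of $K_i$ generated by a subset of the canonical basis — then the inequality \eqref{i2v},
\[
i\ \geq\ \max\left\{\, i\binom{n-1}{i-1}-n\binom{n-1}{i-2},\ (n-i)\binom{n-1}{i} - n\binom{n-1}{i+1} \,\right\},
\]
must hold. Hence it suffices to show that for $n\gg 0$ one of the two terms in the maximum strictly exceeds $i$; this contradiction rules out such a sequence.

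For part~(a), fix $i\ge 2$ and focus on the first term. Using $\binom{n-1}{i-2}=\frac{i-1}{n-i+1}\binom{n-1}{i-1}$ one rewrites it as
\[
i\binom{n-1}{i-1}-n\binom{n-1}{i-2}=\binom{n-1}{i-1}\cdot\frac{n-i(i-1)}{n-i+1}.
\]
For fixed $i$, as $n\to\infty$ this is asymptotic to $\binom{n-1}{i-1}\sim n^{i-1}/(i-1)!$, which tends to infinity since $i-1\ge 1$; in fact for $i=2$ it equals $n-2$. Thus the term exceeds $i$ for $n\gg 0$, \eqref{i2v} fails, and $Z_i$ admits no multigraded Bourbaki sequence.

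For part~(b), fix $j\ge 3$, set $i=n-j$, and focus on the second term. Using $\binom{n-1}{i+1}=\frac{j-1}{i+1}\binom{n-1}{i}$ together with $n-i=j$ and $\binom{n-1}{i}=\binom{n-1}{j-1}$ one gets
\[
(n-i)\binom{n-1}{i}-n\binom{n-1}{i+1}=\binom{n-1}{j-1}\cdot\frac{n-j(j-1)}{n-j+1}.
\]
For fixed $j$ this is asymptotic to $\binom{n-1}{j-1}\sim n^{j-1}/(j-1)!$, a polynomial in $n$ of degree $j-1\ge 2$, while the left-hand side $i=n-j$ of \eqref{i2v} has degree $1$. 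Hence the second term dominates $i$ for $n\gg 0$, \eqref{i2v} fails, and $Z_{n-j}$ admits no multigraded Bourbaki sequence.

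The whole argument is elementary once Theorem~\ref{a5.4} is in hand; there is no real obstacle, only a bit of bookkeeping. The one point worth checking is that the hypotheses $i\ge 2$ in~(a) and $j\ge 3$ in~(b) are sharp for this method: for $i=1$ the first term is the constant $1$, and for $j\in\{1,2\}$ the second term is at most $i$, consistently with the fact that $Z_1$, $Z_{n-1}$ and $Z_{n-2}$ \emph{do} admit multigraded Bourbaki sequences (Propositions \ref{a5.2} and \ref{c5.3}). One should also observe that the threshold for ``$n\gg0$'' is uniform in $i$ (resp.\ $j$), which is immediate from the explicit rational expressions above.
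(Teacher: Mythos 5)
Your argument is correct and is essentially the paper's own proof: both reduce to Theorem~\ref{a5.4} and observe that the first (resp.\ second) term in \eqref{i2v} is, for fixed $i$ (resp.\ $j$), a polynomial in $n$ of degree $i-1\ge 1$ (resp.\ $j-1\ge 2$) with positive leading coefficient, hence strictly exceeds $i$ (resp.\ $n-j$) for $n\gg 0$; your explicit rational rewriting is just different bookkeeping for the same computation. One caveat: your closing remark that the threshold for ``$n\gg 0$'' is uniform in $i$ (resp.\ $j$) is false — already the numerator $n-i(i-1)$ forces $n>i(i-1)$, which grows with $i$ — but this claim is not needed, since the corollary fixes $i$ (resp.\ $j$) before letting $n$ grow.
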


\begin{proof}
(a) The polynomial $f(x)=i\binom{x-1}{i-1}-x\binom{x-1}{i-2}-i$  has degree $i-1$ and the coefficient of $x^{i-1}$ is $i/(i-1)! - 1/(i-2)!=1/(i-1)!>0$. It follows that $i\binom{n-1}{i-1}-n\binom{n-1}{i-2}-i=f(n)>0$ for all $n\gg0$, and we may apply
Theorem~\ref{a5.4}.

(b) The polynomial $g(x)= j\binom{x-1}{j-1}-x\binom{x-1}{j-2}-x+j$
has degree $j-1>1$ and the leading coefficient is $1/(j-1)!>0$. It follows that $\{n-(n-j)\}\binom{n-1}{n-j} - n\binom{n-1}{(n-j)+1}-(n-j) =g(n)>0$ for all $n\gg0$, and the conclusion follows from Theorem~\ref{a5.4} applied to $Z_{n-j}$.
\end{proof}

Here is one immediate application of Theorem~\ref{a5.4}.

\begin{Proposition}\label{reza}
\begin{enumerate}[{\rm (a)}]
\item If $n\geq 5$, there is no multigraded Bourbaki sequence of $Z_2$.
\item If $n\geq 8$, there is no multigraded Bourbaki sequence of $Z_{n-3}$.
\end{enumerate}
\end{Proposition}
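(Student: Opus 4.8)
The plan is to read both statements off Theorem~\ref{a5.4}: a multigraded Bourbaki sequence of $Z_i$ is, after a multigraded change of basis on its free submodule, of the form \eqref{Bourbaki}, so Theorem~\ref{a5.4} forces the numerical inequality \eqref{i2v}, and it suffices to check that \eqref{i2v} fails for the pairs $(i,n)$ in question. The normalization to the form \eqref{Bourbaki} uses that $Z_i$ is generated in degree $i$ and that its multigraded components in total degree $i$ are each zero or one-dimensional, a nonzero one being spanned by a single $\partial_i(\eb_{\pb})$ with $\pb\in\Ic_i$; this is exactly the situation already exploited in the discussion preceding Theorem~\ref{a5.4}. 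So it remains to evaluate \eqref{i2v}.

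For part (a), set $i=2$. The first of the two quantities in the maximum in \eqref{i2v} is
\[
i\binom{n-1}{i-1}-n\binom{n-1}{i-2}=2(n-1)-n=n-2,
\]
using $\binom{n-1}{0}=1$ and $\binom{n-1}{1}=n-1$. Since $n-2>2=i$ precisely when $n\geq 5$, the inequality \eqref{i2v} cannot hold for $n\ge 5$, which gives (a).

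For part (b), set $i=n-3$, so that $2\le i\le n-1$ whenever $n\ge 8$. Here the first quantity in \eqref{i2v} is negative (it equals $-(n-1)(n-2)^2(n-3)(n-6)/24<0$ for $n\ge 7$), so the relevant one is the second:
\[
(n-i)\binom{n-1}{i}-n\binom{n-1}{i+1}=3\binom{n-1}{2}-n\binom{n-1}{1}=\frac{(n-1)(n-6)}{2}.
\]
The inequality $\frac{(n-1)(n-6)}{2}>n-3=i$ is equivalent to $n^2-9n+12>0$, which holds for every $n\geq 8$: the left-hand side equals $4$ at $n=8$, and the quadratic $n^2-9n+12$ is increasing for $n\ge 5$. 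Thus \eqref{i2v} fails for $n\ge 8$, giving (b).

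The heart of the matter is just the two binomial evaluations above, so there is no serious obstacle. The one place that deserves attention is the reduction of an arbitrary multigraded Bourbaki sequence of $Z_i$ to one of the form \eqref{Bourbaki} (so that Theorem~\ref{a5.4} is literally applicable), and, for part (b), the bookkeeping observation that the first of the two lower bounds in \eqref{i2v} contributes nothing for these $n$, so one genuinely has to argue via the second.
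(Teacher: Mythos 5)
Your proposal is correct and is essentially the paper's own argument: the paper likewise proves (a) by checking $2<2\binom{n-1}{1}-n\binom{n-1}{0}=n-2$ for $n\geq 5$ and (b) by checking $n-3<3\binom{n-1}{n-3}-n\binom{n-1}{n-2}=\tfrac{(n-1)(n-6)}{2}$ for $n\geq 8$, and then applies Theorem~\ref{a5.4}. Your extra remarks (the reduction to the form \eqref{Bourbaki} and the sign of the first term in \eqref{i2v} for $i=n-3$) are accurate but not part of, nor needed beyond, the paper's two-line verification.
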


\begin{proof}
It is easy to check  that $2<2 \binom{n-1}{1}-n\binom{n-1}{0}=n-2$, when $n\geq 5$, and that $n-3 < 3 \binom{n-1}{n-3}-n \binom{n-1}{n-2}$ when $n\geq 8$.  Then one applies Theorem~\ref{a5.4}.
\end{proof}

We formulate the following.

\begin{Question} For  $2\leq i \leq n-3$,  is there no multigraded Bourbaki sequence of $Z_i$?
\end{Question}

When $n=5$, the answer is positive, by Theorem~\ref{a5.4}. When $n=6$ the answer is also positive: the case $i=2$ is covered in Proposition~\ref{reza}, and the case $i=3$ is treated by ad-hoc methods in Proposition~\ref{n6z3}.

\subsection{Bourbaki ideals of $Z_{2}$ and $Z_{3}$}

In this section we construct a graded Bourbaki sequence and  determine explicitly a Bourbaki ideal of $Z_2$ for arbitrary $n$.

When $n=6$, we show in Proposition~\ref{n6z3} that $Z_3$ does not have a multigraded Bourbaki sequence. Nevertheless, we describe a graded one for it  in Proposition~\ref{n6z3-explicit}.

\begin{Proposition}\label{a5.5}
Let $F$ be the submodule of $K_2$ generated by the elements $\mathbf{e}_{i, i+1} - \mathbf{e}_{i+1, i+2}$ with  $1\le i\le n-2$ and let $\varphi=\partial_2|_{F}\: F\to Z_2$ be the restriction of $\partial_2$. Then $0\to F \xrightarrow{\varphi} Z_2 \to Z_2/\partial_2(F) \to 0$ is a graded Bourbaki sequence of $Z_2$.
\end{Proposition}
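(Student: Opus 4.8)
The plan is to apply Theorem~\ref{a4.2} to the reflexive module $M=Z_2$, with $s=n-2$ and $\varphi=\partial_2|_F\colon F\to Z_2$. First I would settle the bookkeeping. By Lemma~\ref{a5.1}(a), $\rank Z_2=n-1$, so $s=n-2=\rank Z_2-1$; and $F$ is graded free of rank $n-2$ generated in degree $2$, because the generators $\mathbf{e}_{i,i+1}-\mathbf{e}_{i+1,i+2}$ form a ``staircase'' in the canonical basis of $K_2$, so reading off the coefficient of $\mathbf{e}_{12}$, then of $\mathbf{e}_{23}$, and so on, shows that any $S$-linear relation among them is trivial. Since $x_1,\dots,x_n$ is a regular sequence, the Koszul complex is exact, hence $Z_2=\Im\partial_2=\Ker\partial_1$, which gives an exact sequence $0\to Z_2\xrightarrow{\iota}K_1\xrightarrow{\partial_1}K_0$ with $K_1,K_0$ free; by Lemma~\ref{a3.5}(b) the module $Z_2$ is reflexive, and $X:=\Im\partial_1=Z_1\cong\mm$ is torsionfree, so $0\to Z_2\xrightarrow{\iota}K_1\to X\to 0$ is an exact sequence of the type required in Theorem~\ref{a4.2}. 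Under the identification $\iota\circ\varphi=\partial_2|_F\colon F\to K_1$, Theorem~\ref{a4.2} reduces the proposition to the single claim $\height I_{n-2}(\partial_2|_F)\ge 2$: once that is known, $\varphi$ is injective and $N:=\Coker\varphi=Z_2/\partial_2(F)$ is torsionfree; being graded, torsionfree and of rank $(n-1)-(n-2)=1$ over the domain $S$, it is isomorphic to $I(m)$ for a graded ideal $I$ and some $m$ (as in the proof of Theorem~\ref{basic2.1}), which is exactly the assertion.

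So the heart of the matter is to show $\height I_{n-2}(A)\ge 2$, where $A$ is the $n\times(n-2)$ matrix of $\partial_2|_F$ with respect to the bases $\{\mathbf{e}_{i,i+1}-\mathbf{e}_{i+1,i+2}\}_{i=1}^{n-2}$ of $F$ and $\{\mathbf{e}_1,\dots,\mathbf{e}_n\}$ of $K_1$. Using $\partial_2(\mathbf{e}_a\wedge\mathbf{e}_b)=x_a\mathbf{e}_b-x_b\mathbf{e}_a$ one computes $\partial_2(\mathbf{e}_{i,i+1}-\mathbf{e}_{i+1,i+2})=-x_{i+1}\mathbf{e}_i+(x_i+x_{i+2})\mathbf{e}_{i+1}-x_{i+1}\mathbf{e}_{i+2}$, from which one reads off that $A$ is banded --- column $j$ is supported in rows $j,j+1,j+2$ --- and, crucially, that for $2\le k\le n-1$ the variable $x_k$ occurs only in rows $k-1$ and $k+1$ of $A$ (the precise signs are immaterial below). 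Since $S$ is a polynomial ring, hence a UFD, the inequality $\height I_{n-2}(A)\ge 2$ is equivalent to: $I_{n-2}(A)\ne 0$ and the maximal minors of $A$ have no common irreducible factor. I would establish both by computing a few maximal minors; write $m_{a,b}$ for the minor obtained by deleting rows $a<b$.

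First, $m_{1,2}$ is the determinant of the submatrix on rows $3,\dots,n$, which after reindexing is upper triangular with diagonal $-x_2,-x_3,\dots,-x_{n-1}$, so $m_{1,2}=\pm\,x_2x_3\cdots x_{n-1}\ne 0$. In particular $I_{n-2}(A)\ne 0$, and every common divisor of the maximal minors is a squarefree monomial in $x_2,\dots,x_{n-1}$. Next, fix $k$ with $2\le k\le n-1$ and look at $m_{k-1,k+1}$: the submatrix defining it has no entry involving $x_k$, so $x_k\nmid m_{k-1,k+1}$ as soon as $m_{k-1,k+1}\ne 0$. To see it is nonzero I would expand its determinant along the row coming from the original row $k$; since deleting rows $k-1,k,k+1$ turns column $k-1$ of $A$ into a zero column, only the cofactor of the entry $A_{k,k-1}=x_{k-1}+x_{k+1}$ survives, and after also deleting column $k-1$ the remaining matrix is block diagonal, with a lower-triangular block on rows and columns $1,\dots,k-2$ (diagonal $-x_2,\dots,-x_{k-1}$) and an upper-triangular block on rows $k+2,\dots,n$ and columns $k,\dots,n-2$ (diagonal $-x_{k+1},\dots,-x_{n-1}$); hence $m_{k-1,k+1}=\pm\,(x_{k-1}+x_{k+1})\,x_2\cdots x_{k-1}\,x_{k+1}\cdots x_{n-1}$, which is nonzero and not divisible by $x_k$. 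Combining the two computations, a common irreducible factor of all maximal minors would have to divide $m_{1,2}$, hence be some $x_k$ with $2\le k\le n-1$, but then it would also divide $m_{k-1,k+1}$ --- impossible. Thus $\height I_{n-2}(A)\ge 2$, and we are done.

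I expect the only real work to be the minor computation in the last paragraph, in particular presenting the ``expand along row $k$, kill two cofactors using the zero column, split the rest into triangular blocks'' step cleanly and uniformly in $k$ (including the boundary cases $k=2$ and $k=n-1$, where one of the two blocks is empty); the remaining ingredients --- reflexivity of $Z_2$, freeness of $F$, and the passage from a torsionfree rank-one cokernel to an honest graded Bourbaki ideal --- are routine given the results already established.
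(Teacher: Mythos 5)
Your proof is correct, and its overall shape coincides with the paper's: both reduce the claim, via Theorem~\ref{a4.2} applied to the inclusion $\iota\colon Z_2\to K_1$ (with $X=Z_1\iso \mm$ torsionfree and $Z_2$ reflexive by Lemma~\ref{a3.5}(b)), to the single inequality $\height I_{n-2}(A_n)\ge 2$ for the same $n\times (n-2)$ matrix $A_n$ of $\iota\circ\varphi$, and then pass from the torsionfree rank-one cokernel to a graded ideal as in Theorem~\ref{basic2.1}. Where you genuinely differ is in how the height inequality is verified. The paper argues by induction on $n$: it writes $A_n$ in the two block forms $\left(\begin{smallmatrix} A_{n-1} & * \\ 0 & x_{n-1}\end{smallmatrix}\right)$ and $\left(\begin{smallmatrix} -x_2 & 0 \\ * & B_{n-1}\end{smallmatrix}\right)$, deduces $I_{n-2}(A_n)\supseteq x_{n-1}I_{n-3}(A_{n-1})+x_2 I_{n-3}(B_{n-1})$, and shows a height-one prime containing $I_{n-2}(A_n)$ would have to contain $(x_2,x_{n-1})$, a contradiction. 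You instead compute two explicit families of maximal minors, $m_{1,2}=\pm\, x_2x_3\cdots x_{n-1}$ and $m_{k-1,k+1}=\pm\,(x_{k-1}+x_{k+1})\,x_2\cdots x_{k-1}x_{k+1}\cdots x_{n-1}$ for $2\le k\le n-1$, and use that $S$ is a UFD: a common irreducible factor of all maximal minors would divide $m_{1,2}$, hence be some $x_k$, and then it fails to divide $m_{k-1,k+1}$, which contains no $x_k$ at all. Your minor computations (the triangular block structure after deleting rows $k-1,k+1$, expanding along row $k$, and killing the other cofactors via the zero column $k-1$) check out, including the boundary cases $k=2,\,n-1$ where one block is empty. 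Both arguments are sound; the paper's induction is shorter and uniform in $n$, while yours is more explicit and exhibits concrete elements of $I_{n-2}(A_n)$, which could be reused, e.g., as input for computing the resulting Bourbaki ideal via Theorem~\ref{a4.4}.
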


\begin{proof}
Set $\mathbf{f}_i=\mathbf{e}_{i, i+1} - \mathbf{e}_{i+1, i+2}$ for $1\le i\le n-2$. They are a free basis for $F$ and $\rank(F)=n-2=\rank(Z_2)-1$.
Let $\iota\: Z_2 \to K_1$ denote the inclusion map.
Let $A_n$ be the  matrix  representing  $\iota\circ \varphi$ with respect to the bases $\mathbf{f}_1,\dots, \mathbf{f}_{n-2}$ and $\mathbf{e}_1, \dots, \mathbf{e}_n$.
Then 
\[
A_n=\left(\begin{matrix}
-x_2 & & &&\\
x_1+x_3& -x_3& &&\mbox{\LARGE{0}}\\
-x_2& x_2 + x_4& \ddots&&\\
& -x_3& \ddots&\ddots&\\
& & \ddots&\ddots&-x_{n-1}\\
& & &\ddots&x_{n-2} + x_n\\
\mbox{\LARGE{0}}& & &&-x_{n-1}
\end{matrix}\right).
\]

We prove that $\height(I_{n-2} (A_{n}))\ge 2$ by induction on $n\ge 3$.
If $n=3$, then $I_{1} (A_{3})=(-x_2, x_1+x_3)$, whence its height is two. Assume our assertion holds for $n-1\geq 3$. Then
$A_n=\left(\begin{matrix}
A_{n-1} & *\\
0 & x_{n-1}
\end{matrix}\right)=
\left(\begin{matrix}
-x_2 & 0\\
* & B_{n-1}
\end{matrix}\right),$
where $B_{n-1}$ is the matrix obtained by replacing in $A_{n-1}$,   $x_i$ with $x_{i+1}$ for all $i$.
Hence
$$I_{n-2} (A_{n})\supseteq x_{n-1} I_{n-3} (A_{n-1}) + (-x_2)I_{n-3} (B_{n-1}).$$
Let $\frk{p}$ be any height one prime ideal of $S$. If $I_{n-2} (A_{n}) \subseteq \frk{p}$,
then $x_{n-1} I_{n-3}(A_{n-1}) \subseteq \fkp$.  By the induction hypothesis,  $\height (I_{n-3} (A_{n-1})) \geq 2$, hence $x_{n-1} \in \fkp$. Arguing similarly,  from $x_2 I_{n-3}(B_{n-1}) \subseteq \fkp$ we derive that $x_2\in \fkp$.
Then $(x_2, x_{n-1})\subseteq \frk{p}$, which is a contradiction. This shows that $\height(I_{n-2} (A_{n}))\ge 2$.

 Therefore, $0\to F \xrightarrow{\varphi} Z_2 \to Z_2/\partial_2(F) \to 0$ is a graded Bourbaki sequence of $Z_2$,  by Theorem \ref{a4.2}.
\end{proof}

Next we  compute  the Bourbaki ideal of $Z_2$ determined by the embedding  $\varphi$ in Proposition~\ref{a5.5}, keeping the notation from there.

Let $\psi: K_3\to K_2/F$ be the composition of $\partial_3$ and the canonical projection $K_2\to K_2/F$.
Arguing as in the proof of Theorem~\ref{a5.4}, we observe that
\[
K_3\xrightarrow{\psi} K_2/F\iso \bigoplus_{(i,j)\in H}S\mathbf{e}_{ij} \to Z_2/\partial_2(F) \to 0
\]
is a graded minimal  finite free presentation of $Z_2/ \partial_2(F)$, where we set
\[
H=\{(i,j) \mid 1\le i<j\le n \text{ and }  (i,j)\ne (2,3),(3,4)\dots, (n-1,n)\}.
\]

Let $B$ be the matrix representing $\psi$ with respect to  the canonical basis of $K_3$, and $\{ \mathbf{e}_{ij} \mid (i,j)\in H\}$, respectively. Let  $N=\rank(K_2/F)=\binom{n}{2}-(n-2)$. In order to apply Theorem~\ref{a4.4}, we will describe an $N\times (N-1)$ submatrix of $B$ of rank $N-1$.

For $1\le i\le n-2$ and $1\le j\le n-2$, let $C_{ij}$ be the $(n-i-1)\times (n-j-1)$ submatrix of $B$ with the  rows indexed by $\mathbf{e}_{i,i+2}, \dots, \mathbf{e}_{i,n}$ and the  columns indexed by $\mathbf{e}_{j,j+1,j+2}, \dots, \mathbf{e}_{j,j+1,n}$.

For $1\le j\le n-2$, let $D_j$ be the $1\times (n-j-1)$ submatrix of $B$ with obtained by selecting the row $\mathbf{e}_{12}$ and the columns $\mathbf{e}_{j,j+1,j+2}, \dots, \mathbf{e}_{j,j+1,n}$.

Then
\[
C=\left(
\begin{array}{ccc}
D_1&\cdots & D_{n-2}\\ \hline
C_{11}&\cdots &C_{1,n-2}\\
\vdots&\ddots&\vdots \\
C_{n-2,1}&\cdots &C_{n-2,n-2}
\end{array}
\right)
\]
is an $N\times (N-1)$ submatrix of $B$.

\begin{Lemma}\label{a5.6}
The following statements  hold.
\begin{enumerate}[{\rm (a)}]
\item For $i\ne j$, the entries of the first column of $C_{ij}$ are zero.
\item $C_{ij}=0$ for $1\leq i<j \leq n-2$.
\item $C_{ii}=\left(\begin{smallmatrix}
-x_{i+1} &  & \\
     &    \ddots  & \mbox{\LARGE{{\rm 0}}}    \\
\mbox{\LARGE{{\rm 0}}}    &    &    -x_{i+1}
\end{smallmatrix}\right) \text{ for } 1\leq i\leq n-2.$
\item $D_j=(x_j + x_{j+2}, x_{j+3}, x_{j+4}, \dots, x_n)$ for $1\le j\le n-2$.
\end{enumerate}
\end{Lemma}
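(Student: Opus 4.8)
The plan is to compute directly, from the Koszul differential and the description of $K_2/F$, the columns of the presentation matrix $B$ that occur in $D_j$ and the blocks $C_{ij}$, and then read the four statements off. All of these columns are those of $\psi$ indexed by the triples $(j,j+1,\ell)$ with $j+2\le\ell\le n$, so the whole lemma reduces to expressing $\psi(\mathbf{e}_{j,j+1,\ell})$ in the basis $\{\mathbf{e}_{pq}:(p,q)\in H\}$ of $K_2/F$. First I would recall the two ingredients: the formula $\partial_3(\mathbf{e}_{abc})=x_a\mathbf{e}_{bc}-x_b\mathbf{e}_{ac}+x_c\mathbf{e}_{ab}$, and the fact that the relations $\mathbf{f}_i=\mathbf{e}_{i,i+1}-\mathbf{e}_{i+1,i+2}=0$ force all the classes $\mathbf{e}_{12},\mathbf{e}_{23},\dots,\mathbf{e}_{n-1,n}$ in $K_2/F$ to coincide; that is, in $K_2/F$ every consecutive pair $\mathbf{e}_{k,k+1}$ is identified with the distinguished basis element $\mathbf{e}_{12}$, while the $\mathbf{e}_{pq}$ with $(p,q)\in H$ stay independent.

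Then I would carry out the expansion of $\psi(\mathbf{e}_{j,j+1,\ell})$, splitting into the cases $\ell=j+2$ and $\ell\ge j+3$. When $\ell=j+2$, both $\mathbf{e}_{j+1,j+2}$ and $\mathbf{e}_{j,j+1}$ are consecutive pairs and therefore collapse to $\mathbf{e}_{12}$, which gives $\psi(\mathbf{e}_{j,j+1,j+2})\equiv(x_j+x_{j+2})\,\mathbf{e}_{12}-x_{j+1}\,\mathbf{e}_{j,j+2}$. When $\ell\ge j+3$, the pairs $(j+1,\ell)$ and $(j,\ell)$ are not consecutive, hence belong to $H$, and only $\mathbf{e}_{j,j+1}$ collapses, giving $\psi(\mathbf{e}_{j,j+1,\ell})\equiv x_j\,\mathbf{e}_{j+1,\ell}-x_{j+1}\,\mathbf{e}_{j,\ell}+x_\ell\,\mathbf{e}_{12}$. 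I would note that both formulas are uniform in $j$: the value $j=1$, where $(1,2)$ is not collapsed, gives the same answer since $\mathbf{e}_{12}$ is already the distinguished basis vector.

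Finally I would read off the statements. For (d), the $\mathbf{e}_{12}$-coefficients are $x_j+x_{j+2}$ for the column $\ell=j+2$ and $x_\ell$ for the columns $\ell\ge j+3$, which is exactly the asserted row vector. For (a), the first column of $C_{ij}$ is the $\ell=j+2$ column, whose only term not a multiple of $\mathbf{e}_{12}$ is $-x_{j+1}\mathbf{e}_{j,j+2}$, living in a row with first index $j$; since the rows of $C_{ij}$ have first index $i\ne j$ (and $\mathbf{e}_{12}$ is never one of those rows), this column vanishes inside $C_{ij}$. For (b), when $i<j$ every term of $\psi(\mathbf{e}_{j,j+1,\ell})$ that is not a multiple of $\mathbf{e}_{12}$ lies in a row whose first index is $j$ or $j+1$, both exceeding $i$, so $C_{ij}=0$. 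For (c), specializing to $j=i$, the column of $C_{ii}$ indexed by $\ell$ has its only entry inside $C_{ii}$ equal to $-x_{i+1}$, located in the row indexed by the same $\ell$ (the terms $x_i\mathbf{e}_{i+1,\ell}$ fall outside the block), hence $C_{ii}$ is $-x_{i+1}$ times the identity matrix. The computation has no real obstacle; the only point requiring care is the bookkeeping of which $\mathbf{e}_{k,k+1}$ collapse in $K_2/F$ together with the signs in the Koszul differential.
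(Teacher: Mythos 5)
Your proposal is correct and follows essentially the same route as the paper: the paper's own (terse) proof simply cites the Koszul formulas $\partial(\mathbf{e}_{j,j+1,j+2})$ and $\partial(\mathbf{e}_{j,j+1,q})$, and your argument is the detailed version of exactly that computation, using the identifications $\mathbf{e}_{k,k+1}\equiv\mathbf{e}_{12}$ in $K_2/F$ to read off the blocks $C_{ij}$ and the rows $D_j$.
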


\begin{proof}
(a) follows from the equation $\partial(\mathbf{e}_{j,j+1,j+2})= x_j \mathbf{e}_{j+1, j+2} - x_{j+1} \mathbf{e}_{j, j+2} + x_{j+2} \mathbf{e}_{j,j+1}$.

Parts (b), (c) and (d) follow from the equation
\[
\partial(\mathbf{e}_{j,j+1,q})= x_j \mathbf{e}_{j+1, q} - x_{j+1} \mathbf{e}_{j, q} + x_{q} \mathbf{e}_{j,j+1}
\]
for $1\le j\le n-2$ and $j+2\le q \le n$.
\end{proof}

With notation as above, there exist a graded ideal of height two isomorphic to $Z_2/\partial_2(F)$  and which we explicitly describe  as follows.

\begin{Theorem} A Bourbaki ideal of $Z_2$ is
$I=(1/a) I_{N-1} (C)$, where $a=\prod_{i=2}^{n-2} x_i^{n-1-i}$.
\end{Theorem}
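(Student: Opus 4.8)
The plan is to apply Theorem~\ref{a4.4} to a graded ideal representing $Z_2/\partial_2(F)$, using the presentation afforded by $\psi$ and the submatrix $C$, and then to pin down the resulting correction factor as $a$.

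First, since the sequence in Proposition~\ref{a5.5} is a graded Bourbaki sequence, $Z_2/\partial_2(F)$ is torsionfree of rank one over the factorial Cohen--Macaulay normal domain $S$, so after a degree shift it is isomorphic to a graded ideal $I_0\subseteq S$ with $\gcd(I_0)=1$; then $\grade(I_0)=2$, and by Lemma~\ref{a5.1}(b) (case $i=2$) $I_0$ is generated in degree $n-2$. The exact sequence displayed just before the statement shows that $K_3\xrightarrow{\psi}K_2/F\to Z_2/\partial_2(F)\to 0$ is a graded free presentation — its kernel is the image of $\Im\partial_3=\Ker\partial_2$ — hence so is its twist presenting $I_0$, with $\psi$ represented by $B$ and $\alpha:=\rank(K_2/F)=N$. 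By Lemma~\ref{a5.6}(b),(c) the $(N-1)\times(N-1)$ submatrix $(C_{ij})$ of $C$ is block lower triangular with diagonal blocks $C_{ii}=-x_{i+1}\,\mathrm{Id}$, so $\det(C_{ij})=\prod_{i=1}^{n-2}(-x_{i+1})^{\,n-1-i}=\pm\prod_{k=2}^{n-1}x_k^{\,n-k}\neq 0$. Therefore $\rank(C)=N-1$, and Theorem~\ref{a4.4} yields an element $x\in S$ with $I_0=(1/x)I_{N-1}(C)$ which, by the proof of that theorem, is (up to a unit) the greatest common divisor of the maximal minors of $C$.

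The core step is to show that $a$ divides every maximal minor of $C$. Deleting the row indexed by $\mathbf{e}_{12}$ gives $\pm\det(C_{ij})=\pm\prod_{k=2}^{n-1}x_k^{\,n-k}$, which is divisible by $a=\prod_{k=2}^{n-1}x_k^{\,n-1-k}$. For the minor deleting another row $\mathbf{e}_{i_0,q_0}$ (with $i_0+2\le q_0\le n$) one regroups $C$ by the \emph{largest} index appearing in the basis labels: from $\partial_3(\mathbf{e}_{a,b,c})=x_a\mathbf{e}_{b,c}-x_b\mathbf{e}_{a,c}+x_c\mathbf{e}_{a,b}$ one sees that the column $\mathbf{e}_{j,j+1,q}$ of $C$ has nonzero entries only in the row $\mathbf{e}_{12}$ and the rows $\mathbf{e}_{i,q}$ with $i\le q-2$. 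Hence $C$ is bordered block diagonal: its rows split as $\{\mathbf{e}_{12}\}\sqcup\bigsqcup_{q=3}^{n}\rho_q$ and its columns as $\bigsqcup_{q=3}^{n}\gamma_q$, with $\rho_q=\{\mathbf{e}_{i,q}:1\le i\le q-2\}$, $\gamma_q=\{\mathbf{e}_{j,j+1,q}:1\le j\le q-2\}$, and the block of $C$ on $\rho_q\times\gamma_{q'}$ zero for $q\neq q'$. Expanding the determinant, the minor deleting $\mathbf{e}_{i_0,q_0}\in\rho_{q_0}$ equals $\pm\det(\widetilde M_{q_0})\cdot\prod_{q\neq q_0}\det(M_q)$, where $M_q$ is the (square) block of $C$ on $\rho_q\times\gamma_q$ and $\widetilde M_{q_0}$ is the square submatrix on rows $(\{\mathbf{e}_{12}\}\cup\rho_{q_0})\setminus\{\mathbf{e}_{i_0,q_0}\}$ and columns $\gamma_{q_0}$. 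Each $M_q$ is lower bidiagonal with diagonal $(-x_2,\dots,-x_{q-1})$, so $\det(M_q)=\pm\prod_{k=2}^{q-1}x_k$; a short exponent count then shows that for every $2\le k\le n-1$ the exponent of $x_k$ in $\prod_{q\neq q_0}\det(M_q)$ is at least $n-1-k$, the exponent of $x_k$ in $a$. As $\det(\widetilde M_{q_0})\in S$, it follows that $a$ divides this minor.

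Consequently $a$ divides $x$ up to a unit. Since $C$ has linear entries (it comes from the Koszul differential $\partial_3$), every maximal minor of $C$ is homogeneous of degree $N-1=\binom{n-1}{2}$; hence $x$ is homogeneous, and because the generators $(1/x)\mu$ of $I_0$ (for $\mu$ running through the maximal minors) all have degree $(N-1)-\deg x$ while $I_0$ is generated in degree $n-2$, we get $\deg x=\binom{n-1}{2}-(n-2)=\binom{n-2}{2}=\deg a$. A homogeneous polynomial of the same degree as $a$ that divides $a$ is a nonzero scalar multiple of $a$, so $I_0=(1/x)I_{N-1}(C)=(1/a)I_{N-1}(C)$; as $I_0$ is a degree shift of the cokernel of the Bourbaki sequence of Proposition~\ref{a5.5}, this exhibits $(1/a)I_{N-1}(C)$ as a Bourbaki ideal of $Z_2$. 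I expect the determinantal bookkeeping in the third paragraph — establishing the bordered block structure for the \emph{largest}-index grouping (different from the first-index decomposition of Lemma~\ref{a5.6}) and running the exponent count — to be the main technical obstacle.
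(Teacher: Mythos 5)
Your proposal is correct, and it shares the paper's overall architecture (present $Z_2/\partial_2(F)$, after a twist, as a grade-two ideal via $\psi$, apply Theorem~\ref{a4.4} to the full-rank submatrix $C$ so that the correction factor is the gcd of the maximal minors, and pin down its degree as $\binom{n-2}{2}$ using generation in degree $n-2$ from Lemma~\ref{a5.1}(b)), but the crucial divisibility step goes the opposite way and uses a different decomposition. The paper bounds the gcd \emph{from above}: using the block structure of Lemma~\ref{a5.6} it computes only the $n-2$ minors $\Delta_{i,i+2}(C)=\pm(x_i+x_{i+2})\,m/x_{i+1}$ with $m=\prod_{k=2}^{n-1}x_k^{\,n-k}$, whose gcd is exactly $a$, so the correction factor divides $a$; equality of degrees then finishes. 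You instead bound it \emph{from below}, showing $a$ divides \emph{every} maximal minor by regrouping rows and columns of $C$ according to the largest index $q$, which yields the bordered block-diagonal shape (border row $\eb_{12}$, diagonal blocks $M_q$ on $\rho_q\times\gamma_q$); the Laplace expansion along the columns $\gamma_{q_0}$ has a single nonzero term, giving the factorization $\pm\det(\widetilde M_{q_0})\prod_{q\neq q_0}\det(M_q)$, and since each $x_k$ divides $\det(M_q)$ for all $q\geq k+1$, deleting one $q_0$ still leaves exponent at least $n-1-k$. I checked this structure and the exponent count; they are right, so your divisibility claim holds and, combined with the same degree argument, forces the correction factor to be $a$ up to a scalar. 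What each route buys: the paper's is shorter, needing only an exact gcd of a small explicit family of minors, while yours requires surveying all minors but in exchange certifies directly that $(1/a)I_{N-1}(C)$ consists of polynomials, and it never needs the non-monomial factors $x_i+x_{i+2}$. One small slip: your closing sentence argues as if the correction factor divides $a$, whereas you proved $a$ divides it; since the degrees agree, either divisibility gives the same conclusion, so only the wording (``divisible by $a$'' rather than ``divides $a$'') needs fixing.
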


\begin{proof}
By Lemma \ref{a5.6}, the matrix $C$ has the following form
\[
C=\left(
\begin{array}{cccc}
D_1&\cdots &\cdots & D_{n-2}\\ \hline
C_{11}&0 &\cdots &0\\
\vdots&\ddots&\ddots &\vdots \\
\vdots&&\ddots& 0 \\
C_{n-2,1}&\cdots &\cdots &C_{n-2,n-2}
\end{array}
\right),
\]
where
$C_{ii}=
\left(\begin{smallmatrix}
-x_{i+1} &  & \\
     &    \ddots  &     \\
   &    &    -x_{i+1}
\end{smallmatrix}\right)$
and
$C_{ij}=\left(
\begin{array}{c|ccc}
0 &&&\\
\vdots&&\mbox{\LARGE{*}}&\\
0&&\mbox{\LARGE{*}}&
\end{array}\right)$.
Hence $C$ has  rank $N-1$. It follows by Theorem \ref{a4.4} that there exists a unique element
$b\in S$ such that  $I=(1/b)I_{N-1} (C)$.

For $(i,j)\in H$, let $\Delta_{ij}({C})$ be the determinant of the matrix which is obtained from $C$ by deleting the row   corresponding to $\mathbf{e}_{ij}$.
Then $b=\gcd(\Delta_{ij} \mid  (i,j)\in H)$. Note that
for $1\le i\le n-2$, $\Delta_{i,i+2}({C})$ is the determinant of the matrix
\[
\left(
\begin{array}{ccc|c|ccc|ccc}
D_1&\cdots&D_{i-1}&x_i+x_{i+2}&x_{i+3}&\cdots&x_{n}&D_{i+1}&\cdots&D_{n-2}\\ \hline
C_{11} &&&&&&&&&\\
&\ddots&&\mbox{\Large{0}}&&\mbox{\Large{0}}&&&\mbox{\Large{0}}&\\
&&C_{i-1,i-1}&&&&&&&\\ \hline
&&&&-x_{i+1}&&&&&\\
&\mbox{\LARGE{*}}&&\mbox{\Large{0}}&&\ddots&&&\mbox{\Large{0}}&\\
&&&&&&-x_{i+1}&&&\\ \hline
&&&&&&&C_{i+1}&&\\
&\mbox{\LARGE{*}}&&\mbox{\Large{0}}&&\mbox{\LARGE{*}}&&&\ddots&\\\
&&&&&&&&&C_{n-2,n-2}
\end{array}
\right).
\]
Expanding this matrix with respect to the column corresponding to $\eb_{i,i+1,i+2}$, we see that $\Delta_{i,i+2}({C})=\pm (x_i+x_{i+2}) m/x_{i+1}$, where $m=\prod_{k=2}^{n-1} x_k^{n-k}$.
It follows that the greatest common divisor of $\Delta_{1,3}({C}), \dots, \Delta_{n-2,n}({C})$ is $a=\prod_{i=2}^{n-2} x_i^{n-1-i}$.

Clearly, $b$ divides $a$.
On the other hand, $I$ is generated in degree $n-2$, by Lemma~\ref{a5.1}(b). Hence
\[
n-2=N-1-\deg(b)\ge N-1-\deg(a)=\left\{\binom{n}{2}-(n-2)-1\right\} - \binom{n-2}{2}=n-2.
\]
It follows that $\deg(a)=\deg(b)$, thus $I=(1/a) I_{N-1} (C)$.
\end{proof}

\begin{Example}
We explain the previous constructions for $n=5$. Then
\[
C=\left(\begin{smallmatrix}
x_1+x_3 & x_4 &x_5&x_2+x_4&x_5&x_3+x_5\\
-x_2&0&0&0&0&0\\
0&-x_2&0&0&0&0\\
0&0&-x_2&0&0&0\\
0&x_1&0&-x_3&0&0\\
0&0&x_1&0&-x_3&0\\
0&0&0&0&x_2&-x_4
\end{smallmatrix}\right).
\]
After computing its maximal minors with CoCoA (\cite{Cocoa}), we find that a  Bourbaki ideal of $Z_2$ is
\begin{align*}
I=&(1/x_2^2x_3) I_{6} (C) \\
=&(x_2x_3x_4, x_1x_3x_4 + x_3^2x_4, x_1x_2x_4 + x_1x_4^2 + x_3x_4^2, x_1x_2x_3 + x_1x_2x_5 + x_1x_4x_5 + x_3x_4x_5, \\
&x_2^2x_4 + x_2x_4^2, x_2^2x_3 + x_2^2x_5 + x_2x_4x_5, x_2x_3^2 + x_2x_3x_5).
\end{align*}
\end{Example}


\begin{Proposition} \label{n6z3}
Suppose  $n=6$.
Let $F$ be a  free submodule of $K_3$   generated by   a subset of the  canonical basis of $K_3$ and let $\varphi=\partial_3|_{F}\: F\to Z_3$ be the restriction of $\partial_3$. Then the sequence
\begin{eqnarray}
\label{happy birthday}
0\to F \xrightarrow{\varphi} Z_3 \to Z_3/\partial_3(F) \to 0
\end{eqnarray}
is not a graded Bourbaki sequence of $Z_3$.
\end{Proposition}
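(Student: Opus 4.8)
The plan is to argue by contradiction, using the multigrading throughout. Suppose that \eqref{happy birthday} is a graded Bourbaki sequence of $Z_3$ for some admissible $F$, say with $F$ spanned by the basis vectors $\eb_T$, $T$ running over a set $H$ of $3$-element subsets of $\{1,\dots,6\}$. Since $F$ is generated by part of the canonical basis of $K_3$, the map $\varphi=\partial_3|_F$ is a multigraded homomorphism, so the whole sequence is multigraded and $Z_3/\partial_3(F)$ is a finitely generated $\ZZ^6$-graded torsionfree $S$-module of rank one. Exactly as in the proof of Theorem~\ref{basic2.1} (with the multiplicative set of nonzero monomials of $S$ in place of $S$ there), such a module is isomorphic, as a $\ZZ^6$-graded module, to $I(\cb)$ for some monomial ideal $I\subseteq S$ and some $\cb\in\ZZ^6$. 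By Lemma~\ref{b2.2}, applied to the non-free reflexive module $Z_3$ as in the discussion preceding Lemma~\ref{a5.1}, we have $\height(I)=\grade(I)=2$; and comparing ranks in the sequence forces $|H|=\rank(Z_3)-1=\binom{5}{2}-1=9$.

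Next I would determine the minimal generators of $Z_3/\partial_3(F)$. Because the Koszul complex is minimal, $Z_4=\ker\partial_3\subseteq\mm K_3$, hence $Z_3/\partial_3(F)\cong K_3/(Z_4+F)$ and, reducing modulo $\mm$, its minimal generators are the residues of those $\eb_{T'}$ for which $T'$ ranges over the complement $H'$ of $H$ in the set of all $3$-subsets of $\{1,\dots,6\}$; note $|H'|=\binom{6}{3}-9=11$. Their multidegrees are $\mdeg(\eb_{T'})$, each of total degree $3$. Transporting these multidegrees through the isomorphism $Z_3/\partial_3(F)\cong I(\cb)$, the minimal monomial generators of $I$ are exactly the monomials with multidegrees $\mdeg(\eb_{T'})+\cb$, $T'\in H'$. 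By Lemma~\ref{a5.1}(b) (equivalently Theorem~\ref{a2.1}) these monomials have total degree $3\binom{5}{2}-6\binom{4}{1}=6$, so $|\cb|=6-3=3$.

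The contradiction now comes from a short count. Each vector $\mdeg(\eb_{T'})+\cb$ is the exponent vector of an honest monomial, hence has nonnegative entries. Fix $k\in\{1,\dots,6\}$. Only $\binom{5}{2}=10$ of the $3$-subsets of $\{1,\dots,6\}$ contain $k$, so since $|H'|=11$ there is some $T'\in H'$ with $k\notin T'$; for that $T'$ the $k$th entry of $\mdeg(\eb_{T'})+\cb$ equals $c_k$, whence $c_k\geq 0$. Thus $\cb\geq 0$, and since $|\cb|=3>0$ we may fix $j$ with $c_j\geq 1$. Then $x_j$ divides every minimal generator of $I$, so $I\subseteq (x_j)$ and therefore $\height(I)\leq 1$, contradicting $\height(I)=2$. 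This rules out every such $F$ and proves the Proposition.

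The step I expect to be most delicate is the middle paragraph: one must check carefully that the residues of the $\eb_{T'}$, $T'\in H'$, really do form a minimal system of generators of $Z_3/\partial_3(F)$ (this is where $Z_4\subseteq\mm K_3$ is used) and that the grading shift $\cb$ is transported correctly, so that $|\cb|$ matches the Bourbaki number $m_3=3$ of $Z_3$. Everything else reduces to the numerical inequality $|H'|=\binom{6}{3}-9=11>\binom{5}{2}=10$. It is worth noting that this inequality is precisely what fails for $Z_{n-1}$, where $|H'|=2\le\binom{n-1}{n-2}$, in harmony with the multigraded Bourbaki sequences of Proposition~\ref{a5.2}.
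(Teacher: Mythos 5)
Your argument is essentially sound, and it follows a genuinely different route from the paper's, but there is one step whose justification as written does not hold up: the claim that $\height(I)=\grade(I)=2$ for \emph{your} monomial ideal $I$, obtained from the $\ZZ^6$-graded classification of rank-one torsionfree modules, ``by Lemma~\ref{b2.2}''. Lemma~\ref{b2.2} only rules out $\grade(I)>2$; it gives no lower bound, and indeed a Bourbaki ideal is only determined up to isomorphism of the cokernel, so an arbitrary monomial representative could be of the form $x^{\ab}I'$ and have height one. Since both of your load-bearing steps need $\grade(I)\geq 2$ --- the computation $|\cb|=m_3=3$ via Theorem~\ref{a2.1}, and the final contradiction ``$I\subseteq (x_j)$ contradicts $\height(I)=2$'' --- this must be secured differently. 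The repair is immediate and is already in the paper: since $S$ is factorial and the generators of $I$ are monomials, replace $I$ by $I$ divided by the greatest common divisor of its generators (adjusting the shift $\cb$ accordingly), exactly as in the last paragraph of the proof of Theorem~\ref{basic2.1}; then the gcd is $1$, hence $\height(I)\geq 2=\grade(I)$, and your argument runs: the multidegrees of the minimal generators of $Z_3/\partial_3(F)$ are the eleven $0$--$1$ vectors $\mdeg(\eb_{T'})$, $T'\in H'$, each exponent vector $\mdeg(\eb_{T'})+\cb$ is nonnegative, for every $k$ some $T'\in H'$ avoids $k$ (because $11>\binom{5}{2}=10$), so $\cb\geq 0$, and $|\cb|=3>0$ forces a common variable factor, contradicting $\gcd=1$. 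The other delicate point you flag --- that the residues of the $\eb_{T'}$ minimally generate, with the correct multidegrees --- is fine, since $Z_4=\Im\partial_4\subseteq \mm K_3$.

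Once patched, your proof is quite different from the paper's. The paper argues combinatorially on the admissible index sets: the key observation that at most two of the four $3$-subsets of any $4$-set can index basis elements of $F$, a pigeonhole argument producing an index used at least five times, a graph-theoretic analysis excluding $3$-cycles and whittling the possibilities down to one configuration (up to symmetry), and finally a CoCoA computation showing $I_9(\iota\circ\varphi)\subseteq(x_2x_4x_6)$ together with Theorem~\ref{a4.2}. Your argument replaces all of this by the multigraded structure of the cokernel, the Bourbaki number $m_3=3$ from Theorem~\ref{a2.1}/Lemma~\ref{a5.1}, and the single count $\binom{6}{3}-9=11>\binom{5}{2}$; it is computer-free and, as you note, the same mechanism applies whenever $m_i>0$ and $\binom{n}{i}-\binom{n-1}{i-1}+1>\binom{n-1}{i-1}$, so it would settle further cases of the paper's Question (roughly the range $i\leq n/2$) beyond what Theorem~\ref{a5.4} yields. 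It is also consistent with the positive results: for $Z_{n-1}$ and $Z_{n-2}$ either the counting hypothesis fails or the Bourbaki number vanishes, so no contradiction arises there.
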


\begin{proof}
Suppose that (\ref{happy birthday}) is a graded Bourbaki sequence of $Z_3$. Then $\rank(F)=\rank(Z_3)-1=9$, by  Lemma~\ref{a5.1}. Let  $\Bc$ be a basis of $F$ which is part of the canonical basis for $K_3$.

A key observation is that for any subset $\{m_1, m_2, m_3, m_4\}$ of $[6]:=\{1,2,\dots, 6\}$, at most two of 
\[
\mathbf{e}_{m_1, m_2, m_3}, \mathbf{e}_{m_1, m_2, m_4}, \mathbf{e}_{m_1, m_3, m_4}, \mathbf{e}_{m_2, m_3, m_4}
\]
are in $\Bc$. Indeed, if all four of them are in $\Bc$, then  $0\neq \partial_4  (\eb_{m_1, m_2, m_3, m_4}) \in F$, and $\varphi(\partial_4(\eb_{m_1, m_2, m_3, m_4}))=0$. This contradicts the injectivity of $\varphi$.
Now let us assume that among the former four elements, only $\mathbf{e}_{m_1, m_2, m_3}, \mathbf{e}_{m_1, m_2, m_4}, \mathbf{e}_{m_1, m_3, m_4}$ are in $\Bc$. 
Let $\ol{*}$ denote the residue class of an element $*$ of  $ K_3$    in  $K_3/(\Im \partial_4 + F)$.
 Then
\[
\overline{0}=\overline{\partial_4 (\mathbf{e}_{m_1, m_2, m_3, m_4})}=x_{m_1}\overline{\mathbf{e}_{m_2, m_3, m_4}}.
\]

If  $\overline{\eb_{m_2, m_3, m_4}}= \overline{0}$, then $\eb_{m_2, m_3, m_4} \in \Im \partial_4 +F$. Since $\Im \partial_4$ is generated in degree four, we get that $\eb_{m_2, m_3, m_4} \in F$, which is not the case. So, $\overline{\eb_{m_2, m_3, m_4}}$ is a nonzero torsion element in $K_3/(\Im \partial_4 + F)  \iso Z_3/\partial_3(F)$ which contradicts the fact that \eqref{happy birthday} is a Bourbaki sequence.

Thus, we conclude that for any distinct $m_1, m_2, m_3, m_4 \in [6]$,
$$
(\eb_{m_1, m_2, m_3} \in \Bc \text{ and } \eb_{m_1, m_2, m_4} \in \Bc)  \implies  (\eb_{m_2, m_3, m_4}\notin \Bc \text{ and } \eb_{m_1, m_3, m_4} \notin \Bc).
$$

 The $9$ subsets of $\{ 1, \dots, 6 \}$ which index the elements in $\Bc$ use 27 indices, so by the pigeon hole principle, there exists one index, say $6$, which is used at least 5 times.
Let $G$ be the graph on the vertex set $[5]$ and edges $E(G)=\{ (ij): \eb_{ij6} \in \Bc\}$.
Note that there is no cycle of length $3$ in $G$. Indeed, if $(ij), (jk), (ik) \in E(G)$, then $\eb_{ij6}, \eb_{jk6}, \eb_{ik6} \in \Bc$, which is false by the key observation above.

If $|E(G)| \geq 7$, then the complementary graph $\overline{G}$ has $5$ vertices and at most $3$ edges.
If  $j$ an isolated vertex  in $\overline{G}$, there exist  $i, k \in [5]\setminus\{j\}$ so that $(i,k)$ is not an edge in $\overline{G}$. This implies that $ijk$ is a 3-cycle in $G$, which is false.
In case $\overline{G}$ has no isolated vertex, up to relabeling, then   $E(\overline{G})=\{12, 13, 45\}$.
Thus $234$ is a 3-cycle in $G$, which is false.  
 
In case $|E(G)| =6$,  eventually indentifying first $\overline{G}$ which has $4$ edges, we remark that there are only six  possibilities for $G$ (up to a graph isomorphism). Among them only $G$ with edges $E(G)=\{12, 14, 23, 34, 25, 45\}$ has no cycle of length $3$.
To rule out also this possibility, we note that each pair of incident edges in $E(G)$ eliminates one possible element from $\Bc$.  
E.g. starting with the  edges (12), (14) we get that $\eb_{124}\notin \Bc$. Similarly, 
using the pairs $(12)$ and $(25)$, $(12)$ and $(23)$, $(14)$ and $(34)$ , $(14)$ and $(45)$, $(23)$ and $(25)$, $(23)$ and $(34)$, $(25)$ and $(45)$, respectively $(34)$ and $(45)$
one excludes $\eb_{125},\eb_{123},\eb_{134},\eb_{145},\eb_{235},\eb_{234},\eb_{245},\eb_{345} $, respectively.
The other four basis elements of $K_3$ containg $6$ are also not in $\Bc$. So far, from the $20$ elements of the canonical basis of $K_3$ we showed that  $13$ are not in $\Bc$, so $9=|\Bc| \leq 7$, a contradiction.

Therefore, $|E(G)|=5$.  Since $G$ has no $3$-cyle, after eventually relabeling the vertices we may assume  that $E(G)$  is either
\[
\{(1,2), (2,3), (3,4), (4,5), (1,5)\} \text{ or } \{ (1,2), (2,3), (3,4), (1,4), (1,5)\}.
\]

Assume the latter. The remaining four elements in $\Bc$ correspond to subsets of $[5]$ with three elements.
Arguing as above, the pairs of incident edges $(12)$ and $(15)$, $(12)$ and $(23)$, $(12)$ and $(14)$, $(23)$ and $(34)$, $(34)$ and $(14)$, $(14)$ and $(15)$ indicate that $\eb_{125},\eb_{123},\eb_{124},\eb_{234},\eb_{134},\eb_{145}$, respectively, are not in $\Bc$. So, the remaining four elements in $\Bc$ must be the four remaining ones $\eb_{135},\eb_{235},\eb_{245},\eb_{345}$. 
Since it is not possible to have $\eb_{235},\eb_{245}, \eb_{345}$ in $\Bc$ at the same time, we get a contradiction.

We are left with the case when 
  $\mathbf{e}_{126}, \mathbf{e}_{236}, \mathbf{e}_{346}, \mathbf{e}_{456}, \mathbf{e}_{156} \in \Bc$. Avoiding $3$-cyles in $G$ as before, we  infer that  the remaining four elements in $\Bc$ are  among
\[
\mathbf{e}_{124}, \mathbf{e}_{235}, \mathbf{e}_{134}, \mathbf{e}_{245}, \mathbf{e}_{135}.
\]

By symmetry, we may assume that $\mathbf{e}_{124}, \mathbf{e}_{235}, \mathbf{e}_{134}, \mathbf{e}_{245} \in \Bc$.  Let $\iota: Z_3 \to K_2$ be the inclusion map.
Then, by direct computation with CoCoA (\cite{Cocoa}), one can check that $I_9 (\iota \circ \varphi)\subseteq (x_2x_4x_6)$. It follows from Theorem~\ref{a4.2} that (\ref{happy birthday}) is not a Bourbaki sequence of $Z_3$.
\end{proof}

On the other hand, one can choose the basis of $F$ as follows.

\begin{Proposition} \label{n6z3-explicit}
Suppose  $n=6$.
Let $F$ be the submodule of $K_3$ generated by the elements
$\mathbf{e}_{124} - \mathbf{e}_{126}$,
$\mathbf{e}_{126} - \mathbf{e}_{134}$,
$\mathbf{e}_{134} - \mathbf{e}_{135}$,
$\mathbf{e}_{135} - \mathbf{e}_{156}$,
$\mathbf{e}_{156} - \mathbf{e}_{235}$,
$\mathbf{e}_{235} - \mathbf{e}_{236}$,
$\mathbf{e}_{236} - \mathbf{e}_{245}$,
$\mathbf{e}_{245} - \mathbf{e}_{346}$,
$\mathbf{e}_{346} - \mathbf{e}_{456}$.

Let $\varphi=\partial_3|_{F}\: F\to Z_3$ be the restriction of $\partial_3$. Then $0\to F \xrightarrow{\varphi} Z_3 \to Z_3/\partial_3(F) \to 0$ is a graded Bourbaki sequence of $Z_3$.

Moreover, $Z_3/\partial_3(F)\iso (1/x_1^4)I_{10}(C)(3)$, where
\[
C^{\mathrm{T}}=\left(\begin{smallmatrix}
  -x_2 + x_3& -x_4& 0& 0& 0& 0& x_1& 0& 0& 0& 0\\
  x_1 - x_2& -x_5& x_3& 0& 0& 0& 0& 0& 0& 0& 0\\
  x_1 + x_3& -x_6& 0& -x_2& 0& 0& 0& 0& 0& 0& 0\\
  x_1 - x_5& 0& x_4& 0& -x_2& 0& 0& 0& 0& 0& 0\\
  x_4 - x_6& 0& 0& 0& 0& -x_2& 0& x_1& 0& 0& 0\\
  -x_2 + x_5& 0& -x_6& 0& 0& 0& 0& 0& x_1& 0& 0\\
  x_4 - x_5& 0& 0& 0& -x_3& 0& 0& 0& 0& x_1& 0\\
  x_1 - x_6& 0& 0& x_4& 0& -x_3& 0& 0& 0& 0& 0\\
  -x_3 - x_6& 0& 0& x_5& 0& 0& 0& 0& 0& 0& x_1\\
  x_1 - x_4& 0& 0& 0& -x_6& x_5& 0& 0& 0& 0& 0
\end{smallmatrix}\right).
\]
\end{Proposition}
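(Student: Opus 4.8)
The plan is to obtain the Bourbaki sequence from Theorem~\ref{a4.2} and then to read off the ideal using Theorem~\ref{a4.4}.

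First I would record that $Z_3$ is reflexive and not free. Acyclicity of the Koszul complex gives an exact sequence $0\to Z_3\xrightarrow{\iota}K_2\xrightarrow{\partial_2}K_1$ with $K_2,K_1$ free, so $Z_3$ is reflexive by Lemma~\ref{a3.5}(b); and $\projdim_S Z_3=3$, since the truncated Koszul complex $0\to K_6\to K_5\to K_4\to K_3\to Z_3\to 0$ is a minimal graded free resolution, so $Z_3$ is not free. Observing that $\iota\circ\varphi\colon F\to K_2$ is just the restriction of $\partial_3$ to $F$, I would apply Theorem~\ref{a4.2} with $M=Z_3$, $s=\rank F=9$, the map $\varphi$, and the exact sequence $0\to Z_3\xrightarrow{\iota}K_2\to Z_2\to 0$ (in which $Z_2$ is torsionfree): it gives that $\varphi$ is injective with torsionfree cokernel if and only if $\height\bigl(I_9(\partial_3|_F)\bigr)\geq 2$. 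Since $\Coker(\varphi)$ then has rank $\rank Z_3-9=1$, it is isomorphic to $I(m)$ for a graded ideal $I$ of $S$ and some $m\in\ZZ$, so the displayed sequence is a graded Bourbaki sequence; moreover $m=m_3=3$ by Theorem~\ref{a2.1} (equivalently Lemma~\ref{a5.1}), and $\grade I=2$ by Lemma~\ref{b2.2}.

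The substantive step is therefore to check $\height\bigl(I_9(\partial_3|_F)\bigr)\geq 2$. I would write out the $15\times 9$ matrix of $\partial_3|_F$ from the formula $\partial_3(\mathbf{e}_{abc})=x_a\mathbf{e}_{bc}-x_b\mathbf{e}_{ac}+x_c\mathbf{e}_{ab}$, in the canonical basis of $K_2$ and the path basis $\mathbf{e}_{124}-\mathbf{e}_{126},\dots,\mathbf{e}_{346}-\mathbf{e}_{456}$ of $F$. As $S$ is a UFD, the height bound is equivalent to saying that the greatest common divisor of the $9$-minors of this matrix is a unit, and for that it suffices to exhibit two $9$-minors whose greatest common divisor is a unit; this is a finite verification, which I would carry out with CoCoA. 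This is precisely where the particular choice of $F$ is needed --- an unsuitable path fails, compare Proposition~\ref{n6z3} --- and I expect it to be the only genuine difficulty, the remaining steps being formal.

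For the explicit description I would use the isomorphism $Z_3\cong K_3/\Im\partial_4$, whence $Z_3/\partial_3(F)\cong K_3/(F+\Im\partial_4)=(K_3/F)/\overline{\Im\partial_4}$. Since $\mathbf{e}_v-\mathbf{e}_{v'}\in F$ for any two of the ten basis vectors of $K_3$ occurring in the generators of $F$, the module $K_3/F$ is free of rank $11$, a basis being the common class of those ten vectors together with the classes of the ten basis vectors of $K_3$ that do not occur; with respect to this basis and the canonical basis of $K_4$, the composite $\overline{\partial_4}\colon K_4\to K_3/F$ is given by an $11\times 15$ matrix $B$, and $K_4\xrightarrow{\overline{\partial_4}}K_3/F\to Z_3/\partial_3(F)\to 0$ is a finite free presentation. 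I would then check that the matrix $C$ exhibited in the statement (through its transpose) is an $11\times 10$ submatrix of $B$ of maximal rank $10$; Theorem~\ref{a4.4} provides a unique $a\in S$ with $Z_3/\partial_3(F)\cong\bigl((1/a)\,I_{10}(C)\bigr)(3)$, where $a$ is, up to a unit, the greatest common divisor of the maximal minors of $C$. Finally, by Lemma~\ref{a5.1} the ideal $(1/a)I_{10}(C)$ is generated in degree $6$; as the entries of $C$ are homogeneous of degree at most $1$, its maximal minors are homogeneous of degree $10$, so $\deg a=4$, and computing the minors (with CoCoA) identifies their greatest common divisor as $x_1^4$, which yields the stated formula.
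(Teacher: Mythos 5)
Your proposal is correct and follows essentially the same route as the paper: verify $\height\bigl(I_9(\iota\circ\varphi)\bigr)\ge 2$ by a CoCoA computation and invoke Theorem~\ref{a4.2}, then present $Z_3/\partial_3(F)$ via $K_4\to K_3/F$ and apply Theorem~\ref{a4.4} to the full-rank submatrix $C$ of the presentation matrix, identifying the gcd of its maximal minors as $x_1^4$ with CoCoA. The additional bookkeeping you include (reflexivity of $Z_3$, $\grade(I)=2$ via Lemma~\ref{b2.2}, the rank-$11$ description of $K_3/F$, and the degree count $\deg a=4$) only makes explicit what the paper leaves implicit.
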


\begin{proof}
It is straightforward to check (with CoCoA \cite{Cocoa}) that $\height I_9 (\iota \circ \varphi) \ge 2$, where $\iota: Z_3 \to K_2$ is the inclusion map. It follows that $0\to F \xrightarrow{\varphi} Z_3 \to Z_3/\partial_3(F) \to 0$ is a graded Bourbaki sequence of $Z_3$, by Theorem \ref{a4.2}.
Let
\[
K_4\xrightarrow{\psi} K_3/F\iso \bigoplus_{(i,j,k)\in H}S\mathbf{e}_{ijk} \to I=Z_3/\partial_3(F) \to 0
\]
be a graded minimal  free presentation of $I=Z_3/\partial_3(F)$, where $\psi$ is the canonical composition and
{\small
\[
H=\{(1,2,4), (1,2,6),(1,3,4),(1,3,5),(1,5,6),(2,3,5),(2,3,6),(2,4,5),(3,4,6),(4,5,6)\}.
\]
}
If  $B$ is the matrix representing $\psi$ with respect to the canonical bases of $K_4$ and $\bigoplus_{(i,j,k)\in H}S\mathbf{e}_{ijk}$, each of these bases in the natural order, then
\[
{B}^{\mathrm{T}}=\left(\begin{smallmatrix}
  -x_2 + x_3& -x_4& 0& 0& 0& 0& x_1& 0& 0& 0& 0\\
  x_1 - x_2& -x_5& x_3& 0& 0& 0& 0& 0& 0& 0& 0\\
  x_1 + x_3& -x_6& 0& -x_2& 0& 0& 0& 0& 0& 0& 0\\
  x_1 - x_5& 0& x_4& 0& -x_2& 0& 0& 0& 0& 0& 0\\
  x_4 - x_6& 0& 0& 0& 0& -x_2& 0& x_1& 0& 0& 0\\
  -x_2 + x_5& 0& -x_6& 0& 0& 0& 0& 0& x_1& 0& 0\\
  x_4 - x_5& 0& 0& 0& -x_3& 0& 0& 0& 0& x_1& 0\\
  x_1 - x_6& 0& 0& x_4& 0& -x_3& 0& 0& 0& 0& 0\\
  -x_3 - x_6& 0& 0& x_5& 0& 0& 0& 0& 0& 0& x_1\\
  x_1 - x_4& 0& 0& 0& -x_6& x_5& 0& 0& 0& 0& 0\\
-x_3+x_4&0&0&0&0&0&-x_5&0&0&x_2&0\\
x_2+x_4&0&0&0&0&0&-x_6&-x_3&0&0&0\\
x_5-x_6&0&0&0&0&0&0&0&-x_3&0&x_2\\
x_2-x_6&0&0&0&0&0&0&x_5&-x_4&0&0\\
x_3+x_5&0&0&0&0&0&0&0&0&-x_6&-x_4
\end{smallmatrix}\right).
\]

Since $C$ is a submatrix of $B$ of full rank, there exists  a unique element $a\in S$ such that  $I=(1/a)I_{10}(C)$.
For $1\le s \le 11$, let $\Delta_{s}({C})$ be  the determinant of the matrix which is obtained by deleting the $s$th row of $C$.
Then, by computing (with CoCoA \cite{Cocoa}) the greatest common divisor of $\Delta_{1}({C}), \dots, \Delta_{11}({C})$, we see that $a=x_1^4$.
\end{proof}


\section{The Rees algebra of the Bourbaki ideal in Proposition~\ref{c5.3}}\label{section5}

In this section we consider
the Rees algebra of the Bourbaki ideal $I$ of $Z_{n-2}$ described  in Proposition~\ref{c5.3}. We show that it is a normal Cohen--Macaulay ring, and it is Gorenstein if $n$  is even. It turns out that the Rees algebra  of $Z_{n-2}$ has  the same properties as the Rees algebra of $I$, see \cite[Theorem 3.1 and Theorem 3.4]{SUV}.  At present we do not know whether our result can be directly deduced from \cite{SUV}.

Let, as before,  $S=K[x_1, x_2, \dots, x_n]$ be a polynomial ring of dimension $n\ge 2$ over a field $K$.
For an ideal $I$ of $S$, $\calR(I)=S[It]\subseteq S[t]$ is called  the {\it Rees algebra} of $I$, where $t$ denotes a variable over $S$.

\begin{Proposition}
\label{cm-normal}
Let $n\ge 3$ and $I$ be the Bourbaki ideal of $Z_{n-2}$ stated in Proposition~\ref{c5.3}. Then $\calR(I)$ is a Cohen-Macaulay normal domain.
\end{Proposition}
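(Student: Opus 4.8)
The plan is to use that $I$ is a monomial ideal, so $\calR(I)$ is an affine semigroup ring, and to reduce the whole statement to the normality of $I$ as a monomial ideal.

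Since $S$ is a domain, $\calR(I)\subseteq S[t]$ is a domain. Write $\ab_i=\mathbf 1-\eb_i-\eb_{i+1}\in\ZZ^n_{\ge 0}$ for the exponent vector of the generator $\xb/(x_ix_{i+1})$ of $I$ (indices read modulo $n$), so that $\calR(I)=S[It]$ is the $K$-subalgebra of $K[x_1,\dots,x_n,t]$ generated by the monomials $x_1,\dots,x_n$ and $(\xb/(x_ix_{i+1}))t$, $i=1,\dots,n$. Hence $\calR(I)\iso K[\mathcal S]$, where $\mathcal S\subseteq\ZZ^{n+1}$ is the affine semigroup generated by $(\eb_1,0),\dots,(\eb_n,0)$ and $(\ab_1,1),\dots,(\ab_n,1)$. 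These generators span $\ZZ^{n+1}$ as a group, and a normal affine semigroup ring is Cohen--Macaulay by Hochster's theorem; so it suffices to show that $\mathcal S$ is a normal semigroup, equivalently that $I$ is a normal ideal (every power $I^k$ integrally closed), which will also give that $\calR(I)$ is a normal domain.

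To prove that $I$ is normal I would pass to a polyhedral reformulation. For $\bb\in\ZZ^n_{\ge 0}$ and $k\ge 1$, the monomial $x^{\bb}$ lies in $\overline{I^k}$ exactly when there exist reals $c_1,\dots,c_n\ge 0$ with $\sum_i c_i=k$ and $\bb\ge\sum_i c_i\ab_i$ coordinatewise, while $x^{\bb}$ lies in $I^k$ exactly when such $c_i$ can be chosen in $\ZZ_{\ge 0}$. Since $(\ab_i)_j=1$ unless $j\in\{i,i+1\}$, the inequality $\bb\ge\sum_i c_i\ab_i$ reads $c_{j-1}+c_j\ge k-b_j$ for all $j$. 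Thus $I$ is normal if and only if, for all $k$ and all $\bb$, whenever the polytope
\[
P_k(\bb)=\Bigl\{\,\cb\in\RR^n_{\ge 0}\ :\ \textstyle\sum_i c_i=k,\quad c_{j-1}+c_j\ge k-b_j\ \text{ for }\ j=1,\dots,n\,\Bigr\}
\]
is nonempty, it contains a lattice point. The constraints of $P_k(\bb)$ are the vertex--edge incidence matrix of the $n$-cycle together with nonnegativity and the single equation $\sum_i c_i=k$. I would establish the lattice-point property by a rounding argument along the cycle: take a vertex $\cb^0$ of $P_k(\bb)$; its non-integral coordinates are forced into a ``path'' pattern by the tight inequalities among the $c_{j-1}+c_j\ge k-b_j$ and $c_i\ge 0$, and one transports mass alternately along that path, using the equation $\sum_i c_i=k$ to close it up, until all coordinates become integral without violating any constraint.

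The main obstacle is exactly this rounding when $n$ is odd: the incidence matrix of an odd cycle is not totally unimodular (its determinant is $\pm 2$), so the polytopes $P_k(\bb)$ need not be integral, and one genuinely has to use the equation $\sum_i c_i=k$ and the nonnegativity constraints to repair half-integral vertices. (A variant route, which also prepares the later statements about the Gorenstein property and the Cohen--Macaulay type, is to write $\calR(I)=S[y_1,\dots,y_n]/\mathcal J$ with $\mathcal J$ the toric ideal containing the binomials $x_iy_i-x_{i+2}y_{i+1}$, $i=1,\dots,n$, exhibit a monomial order for which $\ini(\mathcal J)$ is squarefree, and deduce normality from that and Cohen--Macaulayness from Hochster's theorem; the work then lies in identifying a Gröbner basis of $\mathcal J$.) Once $I$ is shown to be normal, Hochster's theorem yields that $\calR(I)=K[\mathcal S]$ is Cohen--Macaulay and the normality of $\mathcal S$ yields that it is a normal domain, which completes the proof; this is consistent with, though not an obvious consequence of, the normality and Cohen--Macaulayness of $\calR(Z_{n-2})$ proved in \cite{SUV}.
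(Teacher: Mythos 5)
Your reduction is sound and matches the paper's general strategy in spirit: both arguments identify $\calR(I)$ with the affine semigroup ring $K[\mathcal S]$, invoke Hochster's theorem for Cohen--Macaulayness, and thereby reduce everything to showing that the semigroup is normal (equivalently, in your formulation, that $I$ is a normal monomial ideal). Your translation into the polytopes $P_k(\bb)$, with constraints $c_{j-1}+c_j\ge k-b_j$ coming from the vertex--edge incidence structure of the $n$-cycle, is also correct.

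However, there is a genuine gap: the normality itself is never proved. The entire content of the statement, beyond the routine reduction, is the claim that whenever $P_k(\bb)\neq\emptyset$ it contains a lattice point, and you only sketch a ``rounding along a path'' procedure while explicitly conceding that the odd-cycle case is the main obstacle (the incidence matrix of an odd cycle is not totally unimodular, so half-integral vertices really do occur and the repair step is where all the work lies); the alternative Gr\"obner-basis route is likewise left as a plan. The paper does exactly this missing work, in a different packaging: it writes down an explicit inequality description of the cone --- nonnegativity \eqref{first}, the ``circular independent set'' inequalities \eqref{second}, and \eqref{third} --- obtains a normal semigroup $D$ from Gordan's lemma, and then proves $C=D$ by a minimal-counterexample argument (Claims~\ref{claim1}--\ref{claim3}), which is a delicate combinatorial analysis choosing a maximal circularly-spread subset $J'$ of the positions where the minimum is attained and subtracting a suitable generator $\fb_w$. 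Until you either carry out your rounding argument in full (including the odd $n$ case) or supply an explicit facet description plus a saturation proof as the paper does, the proposal is an outline of the easy part rather than a proof of the proposition.
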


\begin{proof}
By Proposition \ref{c5.3},
\[
\calR(I)=K[x_1, \dots, x_n, \xb t/x_1x_2, \dots, \xb t/x_{n-1}x_n, \xb t/x_n x_1]\subseteq K[x_1, \dots,x_n, t],
\]
where $\xb=\prod_{i=1}^nx_i$. Therefore, to prove our assertion, it is enough to show that $\calR(I)$ is a normal toric ring, by Hochster's theorem (see \cite{Hochster} or \cite[Theorem 6.3.5]{BH}).

Let $\eb_1, \dots, \eb_n$ and  $\fb_1, \dots, \fb_n$  in $\ZZ^{n+1}$ be the exponent vectors of the monomials
\[
x_1, \dots, x_n \text{ and } \xb  t/x_1x_2, \dots, \xb t/x_{n-1}x_n, \xb t/x_n x_1,
\]
respectively. Set $C$ the affine semigroup generated by $\eb_1, \dots, \eb_n, \fb_1, \dots, \fb_n$. Note that given
$\ab=(a_1, \dots, a_{n+1})^{\mathrm{T}}\in \mathbb{Z}^{n+1}$, then
\begin{align}\label{normality}
\begin{split}
\ab \in C \Leftrightarrow & \ \ab=\sum_{i=1}^{n}r_i \eb_i + \sum_{j=1}^{n} s_j \fb_j \text{ for some nonnegative integers $r_i$ and $s_j$}\\
\Leftrightarrow & \ \ab=\left(\begin{smallmatrix}
r_1\\
r_2\\
r_3\\
\vdots\\
r_{n}\\
0
\end{smallmatrix}\right) +
\left(\begin{smallmatrix}
s-s_n-s_1\\
s-s_1-s_2\\
s-s_2-s_3\\
\vdots\\
s-s_{n-1}-s_n\\
s
\end{smallmatrix}\right)   \text{ where $r_i, s_j \geq 0$ are integers},
\end{split}
\end{align}
and $s=s_1+\cdots+s_n$.


Let  $D$ be the set of lattice points $\ab=(a_1, \dots, a_{n+1})^{\mathrm{T}}\in \mathbb{Z}^{n+1}$ in the rational cone which is obtained by intersecting the half spaces of equations:
\begin{align}\label{first}
a_1\ge 0,\ \dots,\ a_{n+1}\ge 0.
\end{align}
\begin{align}\label{second}
\text{ For $2\le \ell \le \lfloor n/2 \rfloor$, }  a_{i_1}+\dots+a_{i_{\ell}} \ge (\ell-1) a_{n+1},
\end{align}
where $1\le i_1<\dots<i_{\ell}\le n$ such that  $2\le i_{\ell'+1}-i_{\ell'}$ for $1\le \ell' \le \ell-1$ and $i_{\ell}-i_1\le n-2$.
\begin{align}\label{third}
a_1+\dots +a_n\ge (n-2)a_{n+1}.
\end{align}

By Gordan's lemma (see \cite[Proposition 6.1.2 (b)]{BH}) we obtain that $D$ is a normal affine semigroup.
  We prove that $C=D$, which implies that  $\calR(I)$ is a normal ring.
It is straightforward to check the inclusion $C\subseteq D$. Assume that $C\not\supseteq D$ and take an element $\ab=(a_1, \dots, a_{n+1})^{\mathrm{T}}\in D\setminus C$ so that $a_1+\dots +a_{n+1}$ is as small as possible. By the observation in (\ref{normality}), we have $a_{n+1}>0$.

\begin{Claim}\label{claim1}
$a_1>0, \dots, a_{n}>0$.
\end{Claim}

\begin{proof}[Proof of Claim \ref{claim1}]
Suppose that $a_{i_1}=a_{i_2}=0$ for some $1\le i_1 < i_2 \le n$. If $i_2-i_1\not=1\ \mod \ n$, by (\ref{second}), $a_{i_1}+a_{i_2}\ge a_{n+1}>0$, which is a contradiction. Hence $i_2-i_1=1\ \mod \ n$. By symmetry of $a_1, \dots, a_n$, we may assume that $i_1=1$ and $i_2=2$. If $n\geq 4$ then, by (\ref{second}), $a_i=a_1+a_i\ge a_{n+1}$ for $3\le i \le n-1$ and $a_n=a_2+a_n\ge a_{n+1}$.  If $n=3$, by \eqref{third} we also have $a_3\geq a_4$. Hence
\[
\ab=a_{n+1} \fb_1 + \left(\begin{smallmatrix}
0\\
0\\
a_3-a_{n+1}\\
\vdots\\
a_n-a_{n+1}\\
0
\end{smallmatrix}\right)\in C,
\]
which is a contradiction. Hence $0$ appears at most once among $a_1, \dots,a_n$ . Assume $a_1=0$. Then $a_2>0$ and we have
\[
a_i=a_1+a_i\ge a_{n+1} \text{ for $3\le i \le n-1$ and } a_2+a_n \ge a_{n+1}
\]
by (\ref{second})  and \eqref{third}. If $a_2 +a_n>a_{n+1}$, then, all of the inequalities (\ref{first}), (\ref{second}), and (\ref{third})  appearing $a_2$ are strict. It follows that $\ab -\eb_2\in D\setminus C$, which is a contradiction for the minimality of $\sum_{i=1}^{n+1} a_i$. Hence $a_2+a_n=a_{n+1}$. Then
\[
\ab=a_{2} \fb_n + a_n \fb_1 + \left(\begin{smallmatrix}
0\\
0\\
a_3-a_{n+1}\\
\vdots\\
a_{n-1}-a_{n+1}\\
0\\
0
\end{smallmatrix}\right)\in C.
\]
This is also a contradiction. Hence $a_1>0$. By the symmetry of $a_1, \dots, a_n$, we have $a_1>0, \dots, a_{n}>0$.
\end{proof}

Let us denote $a=\min\{a_1, \dots, a_n\}$ and $J=\{i \mid 1\le i\le n, a_i=a\}$.
If $|J|=n$, then $\ab=(a,\dots, a)^{\mathrm{T}}=a\fb_1+a\eb_1+a \eb_2 \in C$, which is false. Therefore, $|J|<n$.

We choose a subset
$J'=\{ j'_1<\dots <j'_v \} \subseteq J$
such that
\begin{equation}
\label{circular}
   2\le j'_{v'+1}-j'_{v'}\text{ for $1\le v'\le v-1$ and }  j'_v-j'_1\le n-2,
\end{equation}
 and $v$ is as large as possible.

For $1\leq w \leq n$ we define its (circular) predecessor to be $\pred(w)=w-1$, if $w>1$ and $\pred(1)=n$.
Similarly, its (circular) successor is $\succ(w)=w+1$, if $w<n$ and $\succ(n)=1$.

\begin{Claim}\label{claim2}
The set $J'$ above can be chosen  such that there exists $w$ in $J'$ with $\pred(w)\notin J'$.
\end{Claim}

\begin{proof}[Proof of Claim \ref{claim2}]
Assume that, for all $w\in J'$, $\pred(w) \in J$. Then we may replace $J'$ with the set $J''=\{\pred(w) | w\in J'\}$ which satisfies \eqref{circular} and $|J''|=|J'|$.
If $J''$ still does not have the desired property, we take predecessor sets until one finds a good substitute for $J'$. Indeed, this process must terminate in at most $n$ steps. Otherwise, it means that $|J|=n$, i.e. $\ab=(a,\dots, a)^{\mathrm{T}}$, which is false.
\end{proof}

We pick  $w \in J'$ so that $\pred(w) \notin  J$.

\begin{Claim}\label{claim3}
 $\ab-\fb_{w}\in D$.
\end{Claim}

\begin{proof} [Proof of Claim \ref{claim3}]
The vector $\ab-\fb_w$ satisfies \eqref{third}, and also \eqref{first} since $a_i>0$ for all $i$.
 We now verify \eqref{second}.

Assume $2\le \ell \le \lfloor n/2 \rfloor$ and $1\le i_1<\dots<i_{\ell}\le n$ such that  $2\le i_{\ell'+1}-i_{\ell'}$ for $1\le \ell' \le \ell-1$ and $i_{\ell}-i_1\le n-2$.

If $\{i_1,\dots, i_\ell\} \cap \{w, \succ(w)\} \neq \emptyset$, since  the $w$th and the $\succ(w)$-th components of $\fb_w$ are zero, it follows that the corresponding inequality \eqref{second} for $\ab-\fb_w$ and the  indices $i_1,\dots, i_\ell$ holds.

Assume $\{i_1,\dots, i_\ell\} \cap \{w, \succ(w)\} = \emptyset$. We then show that
$a_{i_1}+\dots+a_{i_\ell}>(\ell-1)a_{n+1}$, which implies that $a_{i_1}+\dots+a_{i_\ell}-\ell  \geq (\ell-1)(a_{n+1}-1)$ and that
\eqref{second} holds for $\ab-\fb_w$ and the indices $i_1,\dots, i_\ell$.

 We consider two cases.

If $\pred(w) \in \{i_1,\dots, i_\ell\}$, then since $a_{w}=a$ and $a_{\pred(w)}>a$, we obtain using \eqref{second} that
\[
a_{i_1}+\dots +a_{\pred(w)}+\dots + a_{i_\ell} > a_{i_1}+\dots +a_{w}+\dots +a_{i_\ell} \ge (\ell-1)a_{n+1}.
\]

Assume $\pred(w) \notin \{i_1,\dots, i_\ell\}$. Note that for all $1\le \ell' \le \ell$,
\[
a_{i_1}+\dots +a_{i_\ell'}+ \dots +a_{i_\ell}\ge a_{i_1}+ \dots +a_{w}+\dots +a_{i_\ell} \ge  (\ell-1)a_{n+1}.
\]
Hence, if $a_{i_1}+\dots+a_{i_\ell}=(\ell-1)a_{n+1}$, then $a_{i_1}=\dots=a_{i_\ell}=a$.
On the other hand,  using \eqref{second} for $J'$, we have $va \ge (v-1)a_{n+1}$.
Therefore, $va\ge (v-1)\ell a/(\ell-1)$, which implies $v\le \ell$. It follows that $\ell=v$ by the maximality of $v$.
However, the set $\{i_1, \dots, i_\ell \}\cup \{w\}\subseteq J$  also satisfies \eqref{circular}, which  contradicts
the maximality of  $|J'|$.
\end{proof}

Clearly, $\ab-\fb_w$ is not in the semigroup $C$, because otherwise $\ab\in C$, which is false. So, the vector $\ab-\fb_w\in D\setminus C$  has the sum of its components less than $\sum_{i=1}^{n+1} a_i$, which is false. Consequently, $C=D$ and
the normality of the Rees algebra $\calR(I)$ is now fully proven.
\end{proof}

\begin{Theorem}\label{good}
Let $\calR(I)$ denote the Rees algebra of the Bourbaki ideal $I$ of $Z_{n-2}$ stated in Proposition \ref{c5.3}. Then
\begin{enumerate}[{\rm (a)}]
\item If $n$ is even, then $\calR(I)$ is a Gorenstein normal domain.
\item If $n$ is odd, then $\calR(I)$ is a Cohen-Macaulay normal domain of type two.
\end{enumerate}
\end{Theorem}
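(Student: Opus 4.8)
The plan is to compute the canonical module of $\calR(I)$ directly from the toric description in Proposition~\ref{cm-normal}. There $\calR(I)$ is identified with the normal, Cohen--Macaulay affine semigroup ring $K[C]$, and $C$ is shown to coincide with the explicit semigroup $D\subseteq\ZZ^{n+1}$ cut out by \eqref{first}, \eqref{second}, \eqref{third}. By the description of the canonical module of a normal affine semigroup ring (Stanley; see \cite[Theorem~6.3.5(b)]{BH}), $\omega_{\calR(I)}$ is the ideal of $\calR(I)$ with $K$-basis the monomials $t^{\mathbf{a}}$, $\mathbf{a}\in\relint(\RR_{\ge0}C)\cap\ZZ^{n+1}$. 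Hence $\calR(I)$ is Gorenstein if and only if this ideal is principal, i.e. $\relint(\RR_{\ge0}C)\cap\ZZ^{n+1}=\mathbf{c}+C$ for a single vector $\mathbf{c}$, and in general its Cohen--Macaulay type is the number of minimal elements of $\relint(\RR_{\ge0}C)\cap\ZZ^{n+1}$ under the order $\mathbf{a}\preceq\mathbf{b}\iff\mathbf{b}-\mathbf{a}\in C$. Since normality is already known, everything reduces to the combinatorics of these interior lattice points: by \eqref{first}--\eqref{third} they are exactly the $\mathbf{a}$ with $a_i\ge1$ for all $i$, with $\sum_{j\in A}a_j\ge(|A|-1)a_{n+1}+1$ for every index set $A$ admissible in \eqref{second} (equivalently, every independent set of the cyclic graph on $\{1,\dots,n\}$ of size $\le\lfloor n/2\rfloor$), and with $\sum_{i=1}^na_i\ge(n-2)a_{n+1}+1$.

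For $n$ even I would argue as follows. The two admissible sets $\{1,3,\dots,n-1\}$ and $\{2,4,\dots,n\}$, each of size $n/2$, show that \eqref{third} (even in its strict form) is the sum of two instances of \eqref{second}; so it is redundant, and an integer vector belongs to $C=D$ as soon as it satisfies \eqref{first} and \eqref{second}. Put $\mathbf{1}=(1,\dots,1)$. It is an interior point, and $\mathbf{1}+C\subseteq\relint(\RR_{\ge0}C)\cap\ZZ^{n+1}$. Conversely, if $\mathbf{w}$ is an interior lattice point, then $\mathbf{w}-\mathbf{1}$ has nonnegative entries and, for every admissible $A$ with $|A|=\ell$,
\[
\sum_{j\in A}(w_j-1)-(\ell-1)(w_{n+1}-1)=\Bigl(\sum_{j\in A}w_j-(\ell-1)w_{n+1}\Bigr)-1\ge0 ,
\]
so $\mathbf{w}-\mathbf{1}$ satisfies \eqref{first} and \eqref{second}, hence lies in $C$. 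Thus $\relint(\RR_{\ge0}C)\cap\ZZ^{n+1}=\mathbf{1}+C$, so $\omega_{\calR(I)}\cong\calR(I)(-\mathbf{1})$ is free of rank one and $\calR(I)$ is a Gorenstein normal domain; this proves (a).

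For $n=2m+1$ odd the same computation shows that $\mathbf{w}-\mathbf{1}$ can fail membership in $C$ only through \eqref{third}, and only when $\mathbf{w}$ is \emph{tight}, meaning $\sum_{i=1}^nw_i=(n-2)w_{n+1}+1$. I claim that the interior lattice points have exactly two minimal elements, $\mathbf{1}$ and $\mathbf{v}:=(\,\underbrace{m,\dots,m}_{n},\,m+1\,)$. One checks readily that $\mathbf{v}$ is interior (for admissible $A$ of size $\ell$ the slack is $\sum_{j\in A}v_j-(\ell-1)v_{n+1}=m-\ell+1\ge1$, and $\sum_{i=1}^nv_i-(n-2)v_{n+1}=1$) and that $\mathbf{1}$ and $\mathbf{v}$ are $\preceq$-incomparable ($\mathbf{v}-\mathbf{1}$ violates \eqref{third}, while $\mathbf{1}-\mathbf{v}$ has a negative last entry). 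It then remains to prove that every tight interior $\mathbf{w}$ satisfies $\mathbf{w}-\mathbf{v}\in C=D$. Inequality \eqref{third} for $\mathbf{w}-\mathbf{v}$ is automatic ($1-1=0$). The entrywise inequalities needed for \eqref{first}, namely $w_j\ge m$ for $j\le n$ and $w_{n+1}\ge m+1$, I would obtain in two steps: summing the interior inequalities for the two size-$m$ independent sets given by the two colour classes of the path obtained by deleting a vertex from the $n$-cycle yields $w_j\le w_{n+1}-1$ for all $j\le n$; feeding this (and $w_j\ge1$) into the interior inequality for a maximum independent set through a given vertex forces $w_j\ge m$, whence $\sum_iw_i\le n(w_{n+1}-1)$ gives $w_{n+1}\ge m+1$.

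\textbf{The main obstacle} is the verification of \eqref{second} for $\mathbf{w}-\mathbf{v}$, which amounts to the sharpened slack estimate $\sum_{j\in A}w_j\ge(\ell-1)w_{n+1}+(m-\ell+1)$ for every admissible $A$ of size $\ell$ and every tight interior $\mathbf{w}$. For $\ell=m$ this is precisely the interior inequality, and whenever $A$ extends to a maximum independent set $B$ of the $n$-cycle it follows by subtracting from the interior inequality for $B$ the $|B\setminus A|$ entries of $\mathbf{w}$ on $B\setminus A$ (each bounded by $w_{n+1}-1$). The delicate case is when $A$ is a \emph{maximal but not maximum} independent set of the $n$-cycle — the smallest instance being the equally spaced triple in the $9$-cycle — where one must instead cover $A$ with multiplicity two and $\{1,\dots,n\}\setminus A$ with multiplicity one by maximum independent sets, and add the corresponding instances of \eqref{second} to the tightness relation. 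Establishing this covering for all odd cycles, in the combinatorial spirit of Claims~\ref{claim1}--\ref{claim3} in the proof of Proposition~\ref{cm-normal}, is the technical heart of the argument; granting it, $\mathbf{w}-\mathbf{v}\in C$ for every tight interior $\mathbf{w}$, so $\{\mathbf{1},\mathbf{v}\}$ are the only minimal interior lattice points, $\mu(\omega_{\calR(I)})=2$, and $\calR(I)$ is a Cohen--Macaulay normal domain of type two, proving (b).
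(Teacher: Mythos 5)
Your part (a) and the overall reduction in part (b) are sound and essentially follow the paper's route: $\mathbf{1}=F_1$ handles all interior lattice points for $n$ even, and for $n$ odd the non-tight interior points lie in $F_1+C$, the tight ones are to be absorbed by $F_2=(m,\dots,m,m+1)$, and the incomparability of $F_1,F_2$ gives type exactly two. However, there is a genuine gap exactly where you flag it: the claim that every tight interior point $\mathbf{w}$ satisfies $\mathbf{w}-F_2\in C$ is not proved. You reduce it to the sharpened estimate $\sum_{j\in A}w_j\ge(\ell-1)w_{n+1}+(m-\ell+1)$ for every admissible $A$ of size $\ell$, which you can only verify when $A$ extends to a maximum independent set of the odd cycle; for maximal-but-not-maximum independent sets (e.g. $\{1,4,7\}$ in the $9$-cycle) you appeal to an unproved covering statement (cover $A$ twice and its complement once by maximum independent sets of $C_n$) and explicitly say ``granting it''. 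Since part (b) is precisely the case where the Gorenstein property fails and the type must be pinned down, this missing lemma is the heart of the statement, not a routine verification; as written, the proof of (b) is incomplete. (There is also a minor boundary issue: your auxiliary deductions $w_j\le w_{n+1}-1$ and $w_j\ge m$ use size-$m$ instances of \eqref{second}, which presupposes $m\ge 2$, so $n=3$ would need separate treatment.)

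The paper avoids this obstacle entirely by a different mechanism. Since $C=D$ is already established in Proposition~\ref{cm-normal}, an interior point $F$ can be written as $F=\sum_i r_i\eb_i+\sum_j s_j\fb_j$ with $r_i,s_j\ge 0$; tightness of \eqref{third} means exactly $\sum_i r_i=1$, say $r_1=1$. One then deduces $s_1,s_3,\dots,s_{2k+1}>0$ directly from $k+1$ well-chosen strict instances of \eqref{second} (each yields $ks-s+s_{2\ell+1}>(k-1)s$), and since $F_2=\eb_1+\fb_1+\fb_3+\dots+\fb_{2k+1}$, the difference $F-F_2$ is manifestly a nonnegative integer combination of the $\fb_j$, hence in $C$ -- no re-verification of the facet inequalities of $D$ and no covering lemma for odd cycles is needed. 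Your intended inequality is in fact true (it follows a posteriori from this argument), so your route could probably be completed, but the combinatorial covering lemma you would need is a real piece of work that your proposal does not supply; switching to the semigroup-decomposition argument is the cleaner fix.
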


\begin{proof}
Let $R=\calR(I)$.
In view of Proposition~\ref{cm-normal}, keeping the notation from its proof, $R$ is a toric ring generated by the monomials whose exponent is in the affine semigroup $C\subset\ZZ^{n+1}$, which satisfies $C=\RR_+ C \sect \ZZ^{n+1}$.

Let $\omega_R$ be  the ideal $(\xb^F| F\in \ZZ^{n+1} \cap  \relint \RR_+C)R$,  where $\relint \RR_+C$ denotes   the relative interior of the cone   $\mathbb{R}_+ C$.
Then $\omega_R$ is the canonical module of $R$ (cf.  \cite{Danilov}, \cite{Stanley}, or see \cite[Theorem 6.3.5(b)]{BH}).

Let $F \in \ZZ^{n+1}$. Then $F$ is in the semigroup $C$ if and only if its coordinates satisfy the weak inequalities
\eqref{first}, \eqref{second}, \eqref{third}; and moreover $F\in \relint \RR_+ C$ if and only if none of the latter inequalities becomes an equation.

This way, it is routine to check that  $F_1=(1, \dots, 1)^{\mathrm{T}}\in \mathbb{Z}^{n+1}$ is in $\relint \RR_+ C$.

Assume $F=(a_1, \dots, a_{n+1}) \in \relint \RR_+ C \cap \ZZ^{n+1}$.

(a) If $n$ is even, then $n=2k$. Clearly, $a_i -1 \geq 0$ for all $i=1,\dots, n+1$. Also, if $2\leq \ell \leq \lfloor n/2 \rfloor$ and $1\leq i_1 <\dots < i_\ell \leq n$ such that $2\leq i_{\ell'+1}-i_{\ell'}$ for $1\leq \ell' \leq \ell-1$ and $i_\ell- i_1 \leq n-2$,  then $a_{i_1}+\dots+a_{i_\ell} > (\ell-1) a_{n+1}$ implies that $(a_{i_1}-1)+\dots+(a_{i_\ell}-1) \geq (\ell-1) (a_{n+1}-1)$.

Using \eqref{second} twice, we obtain
\[
a_1+a_3 +\dots +a_{2k-1}> (k-1)a_{n+1} \text{ and } a_2+a_4 +\dots +a_{2k}> (k-1)a_{n+1}.
\]
Adding them yields $\sum_{i=1}^n (a_i-1) \geq (n-2) (a_{n+1}-1)$. Hence, the coordinates of $F-F_1$ satisfy the inequalities \eqref{first}, \eqref{second} and \eqref{third}, and $F-F_1 \in C$. Therefore, $\omega_R=(\xb^{F_1})R$ and $R$ is a Gorenstein ring.

(b) Suppose that $n=2k+1$. It is routine to check that $F_2=(k, \dots, k, k+1)^{\mathrm{T}} \in \relint \RR_+ C$, hence $(\xb^{F_1}, \xb^{F_2})R\subseteq \omega_R$.

We claim that $F-F_1 \in C$ or $F-F_2 \in  C$. This implies that $\omega_R  \subseteq (\xb^{F_1}, \xb^{F_2})R$.

Indeed, using \eqref{normality} we may write
\[
F=\sum_{i=1}^{n}r_i \eb_i + \sum_{j=1}^{n} s_j \fb_j =\left(\begin{smallmatrix}
r_1\\
r_2\\
r_3\\
\vdots\\
r_{n}\\
0
\end{smallmatrix}\right) +
\left(\begin{smallmatrix}
s-s_n-s_1\\
s-s_1-s_2\\
s-s_2-s_3\\
\vdots\\
s-s_{n-1}-s_n\\
s
\end{smallmatrix}\right),
\]
where $r_1, \dots, r_n$, $s_1, \dots, s_n$ are nonnegative integers and $s=s_1+\cdots+s_n$. Note that $r_1+\cdots+r_n>0$ since $F$ satisfies the strict inequality of (\ref{third}).

If  $r_1+\cdots+r_n\ge 2$, then $F-F_1$ satisfies the inequality (\ref{third}). It follows that $F-F_1\in C$.

In case   $r_1+\dots+r_n =1$, by symmetry, we may assume that $r_1=1$. Then
\[
\left(\begin{smallmatrix}
a_1\\
a_2\\
\vdots \\
a_n\\
a_{n+1}
\end{smallmatrix}\right)=F=
\left(\begin{smallmatrix}
1+s-s_n-s_1\\
s-s_1-s_2\\
\vdots\\
s-s_{n-1}-s_n\\
s
\end{smallmatrix}\right).
\]
We prove that $s_1>0, s_3>0, \dots, s_{2k+1}>0$.

Since $F$ satisfies the strict inequalities of (\ref{second}),  in particular we have that
\begin{align*}
a_2 + a_4+a_6 +\cdots +&a_{2k-4} + a_{2k-2} + a_{2k\phantom{+1}} > (k-1)a_{n+1},\\
a_2 + a_4+a_6 +\cdots +&a_{2k-4} + a_{2k-2} + a_{2k+1} > (k-1)a_{n+1},\\
a_2 + a_4+a_6 +\cdots +&a_{2k-4} + a_{2k-1} + a_{2k+1} > (k-1)a_{n+1},\\
&\vdots \\
a_2 + a_5+a_7 +\cdots +&a_{2k-3} + a_{2k-1} + a_{2k+1} > (k-1)a_{n+1},  \text{ and}\\
a_3 + a_5+a_7 +\cdots +&a_{2k-3} + a_{2k-1} + a_{2k+1} > (k-1)a_{n+1}.
\end{align*}
These inequalities imply that $ks-s+s_{2\ell+1}>(k-1)s$, that is, $s_{2\ell+1}>0$  for $0\le \ell\le k$.
On the other hand, since $F_2=\eb_1+\fb_1+\fb_3+\dots +\fb_{2k+1}$ we may write
\[
F-F_2=\sum_{\ell=0}^k (s_{2\ell+1}-1)\fb_\ell + \sum_{\ell'=1}^k s_{2\ell'}\fb_{\ell'} \in C,
\]
which proves our claim.

Note that $F_1-F_2$ has negative entries and $F_2-F_1$ does not satisfy \eqref{third}, so $F_1-F_2\notin C$ and $F_2-F_1\notin C$. Hence $(\xb^{F_1})R\neq (\xb^{F_1}, \xb^{F_2})R\neq (\xb^{F_2})R$. We conclude that when $n$ is odd, $\omega_R$ is minimally generated by $\xb^{F_1}$ and $\xb^{F_2}$, so the Cohen-Macaulay type of $R$ is two.
\end{proof}

\end{document}